\newtheorem{thm}{Theorem}[section]
\newtheorem{lem}[thm]{Lemma}
\newtheorem{prop}[thm]{Proposition}
\newtheorem{cor}[thm]{Corollary}
\theoremstyle{definition}
\newtheorem{rem}[thm]{Remark}
\newtheorem{defn}[thm]{Definition}
\newtheorem*{defn*}{Definition}
\newtheorem*{thm*}{Theorem}
\newtheorem*{prop*}{Proposition}
\newtheorem*{cor*}{Corollary}
\newtheorem*{question*}{Question}
\newcommand{\cE}{\mathcal{E}}
\newcommand{\cL}{\mathcal{L}}
\newcommand{\cO}{\mathcal{O}}
\newcommand{\cS}{\mathcal{S}}
\newcommand{\C}{\mathbb{C}}
\newcommand{\R}{\mathbb{R}}
\newcommand{\Z}{\mathbb{Z}}
\newcommand{\pqq}{\mathsf{q}}
\newcommand{\pp}{\mathsf{p}}
\newcommand{\rot}{\mathsf{R}}
\newcommand{\refl}{{\underline{\mathsf{R}}}}
\DeclareMathOperator{\Index}{index}
\DeclareMathOperator{\nullity}{nullity}
\newcommand{\Nul}{\operatorname{Nul}}
\newcommand{\Ind}{\operatorname{Ind}}
\newcommand{\Ric}{\operatorname{Ric}}
\newcommand{\tensor}{\otimes}
\newcommand{\Id}{\operatorname{Id}}
\newcommand{\Span}{\operatorname{Span}}
\newcommand{\im}{\operatorname{Im}}
\newcommand{\id}{\operatorname{id}}
\newcommand{\sff}{{\rm II}}
\newcommand{\oplusu}{{\,{\oplus}_{\cL}\,}} 
\newcommand{\Omu}{{\underline{\Omega}}}
\begin{document}
 

\title{Index of minimal hypersurfaces in real projective spaces}

\address{Department of Mathematics, Stanford University, 450 Jane Stanford Way, Bldg
380, Stanford, CA 94305}

\author{Shuli Chen}
\email{shulic@stanford.edu}

\date{\today}
 \maketitle

\begin{abstract} We prove that for an embedded unstable one-sided minimal hypersurface of the $(n+1)$-dimensional real projective space, the Morse index is at least $n+2$, and this bound is attained by the cubic isoparametric minimal hypersurfaces. We also show that there exist closed embedded two-sided minimal surfaces in the 3-dimensional real projective space of each odd index by computing the index of the Lawson surfaces. 
\end{abstract}
\section{Introduction}
Minimal hypersurfaces in the unit round sphere $S^{n+1}$ have been intensively studied. For instance, Lawson \cite{lawson1970complete} constructed three families of minimal surfaces in $S^3$, now called the Lawson surfaces, and used them to show that every closed surface but the projective plane can be minimally immersed into $S^3$. However, determining the Morse index of a minimal hypersurface is in general a difficult problem which has been fully solved only in a few cases. For example, Solomon \cite{solomon1990harmonic, solomon1990harmonic2} computed the index of cubic isoparametric minimal hypersurfaces via representation theory. Kapouleas and Wiygul \cite{kapouleas2020index} computed the index of the Lawson surfaces $\xi_{m-1,1}$ using their symmetries.

On the other hand, certain bounds can be given on the index of minimal hypersurfaces in $S^{n+1}$. In the late '60s, Simons \cite{simons1968minimal} proved that there do not exist closed stable minimal hypersurfaces in $S^{n+1}$ and also characterized the totally geodesic hyperspheres as the only closed minimal hypersurfaces
of index one. Later, Urbano \cite{urbano1990minimal} proved that any closed two-sided minimal surface in $S^3$ that is not totally geodesic has index at least 5 and the equality occurs only for the minimal Clifford surface. A direct computation shows that minimal Clifford  hypersurfaces have index $n+3$ in $S^{n+1}$, and it can be shown that any closed two-sided minimal non-totally geodesic hypersurface in $S^{n+1}$ has
index bigger than or equal to $n + 3$ (see e.g., \cite{perdomo2001low}). An open question is whether the minimal Clifford hypersurfaces are the only minimal immersed hypersurfaces in $S^{n+1}$ with index $n+3$. For a two-sided minimal non-totally geodesic hypersurface $\Sigma^n$ in $S^{n+1}$, Savo \cite{savo2010index} showed that its index is bounded below by $\frac{2}{(n+1)(n+2)}b_1(\Sigma) + n + 2$, which is a linear function of its first Betti number $b_1(\Sigma)$. In this paper, we observe that Savo's bound can be extended to the one-sided case as well (see Theorem~\ref{thm: one-sided, sphere, Betti}).

Since the real projective space $\R P^{n+1}$ is the quotient of $S^{n+1}$ by the antipodal map, it is also natural to study minimal hypersurfaces in $\R P^{n+1}$. Notice that $\R P^{n+1}$ has no closed stable two-sided minimal hypersurfaces because it has positive Ricci curvature. However, the real projective subspaces $\R P^{n}$ are closed stable one-sided minimal hypersurfaces in $\R P^{n+1}$, and Ohnita \cite{ohnita1986stable} proved that they are the only stable ones; this demonstrates that the stability condition in the one-sided case in general becomes more subtle. 
For two-sided minimal hypersurfaces in $\R P^{n+1}$, do Carmo, Ritor\'e, and Ros \cite{do2000compact} proved that the totally geodesic sphere and the Clifford hypersurfaces are the only closed two-sided minimal hypersurfaces of $\R P^{n+1}$ with index 1, and Batista and Martins \cite{batista2022minimal} further showed that they are the only closed two-sided minimal and constant scalar curvature
hypersurfaces in $\R P^{n+1}$ with index less than or equal to two. Ambrozio, Carlotto, Sharp \cite{ambrozio2018comparing} extended Savo's result on hypersurfaces in $S^{n+1}$ to hypersurfaces in $\R P^{n+1}$, and showed that for a closed embedded minimal hypersurface $\Sigma \subset \R P^{n+1}$, its index is bounded below by $\frac{2}{(n+1)(n+2)}b_1(\Sigma)$.

In this paper, we consider one-sided minimal hypersurfaces of $\R P^{n+1}$ and show that unlike the two-sided case, there is a large gap in the index.
We also compute the index of the cubic isoparametric minimal hypersurfaces in $\R P^{n+1}$ using works of Solomon \cite{solomon1990harmonic, solomon1990harmonic2}. See Section \ref{subsec: isoparametric} for the definition of cubic isoparametric minimal hypersurfaces.

\begin{thm}\label{thm: RP^n}
For an embedded unstable one-sided minimal hypersurface $\Sigma^n$ of $\R P^{n+1}$, the Morse index is at least $n+2$. Further, this bound is attained by the cubic isoparametric minimal hypersurfaces.
\end{thm}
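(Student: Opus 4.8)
The plan is to pass to the antipodal double cover and turn the index into a constrained spectral problem for the Jacobi operator on a two-sided minimal hypersurface of the round sphere. Write $\pi\colon S^{n+1}\to\R P^{n+1}$ for the quotient by $A(x)=-x$. Since $\Sigma$ is one-sided, $\widetilde\Sigma:=\pi^{-1}(\Sigma)\subset S^{n+1}$ is a closed minimal hypersurface; it is connected (otherwise $\Sigma$ would lift to $S^{n+1}$ and inherit a trivial normal bundle) and two-sided in $S^{n+1}$ (since $H^1(S^{n+1};\Z/2)=0$), so we may fix a global unit normal $\nu$. The free $\Z/2$-action generated by $A$ preserves the normal line field of $\widetilde\Sigma$, and \emph{because $\Sigma$ is one-sided} one has $\nu\circ A=\nu$; consequently normal fields of $\Sigma$ correspond exactly to the $A$-anti-invariant functions on $\widetilde\Sigma$, the Jacobi operator of $\Sigma$ becomes $\widetilde L:=\Delta_{\widetilde\Sigma}+|A|^2+n$ (with $|A|^2$ the squared norm of the second fundamental form of $\Sigma$, equivalently of $\widetilde\Sigma$, and $n=\Ric_{\R P^{n+1}}(\nu,\nu)$) acting on the space $\mathcal A$ of anti-invariant functions, and $\Index(\Sigma)$ equals the number of positive eigenvalues of $\widetilde L|_{\mathcal A}$, i.e.\ the largest dimension of a subspace of $\mathcal A$ on which $u\mapsto\int_{\widetilde\Sigma}u\,\widetilde L u$ is positive. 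I would also record that a totally geodesic $\R P^n\subset\R P^{n+1}$ is stable (the first Laplace eigenvalue of $S^n$ on odd functions is $n$, so its index form is $\ge0$); hence the assumption that $\Sigma$ is unstable forces $\widetilde\Sigma$ to be non-totally-geodesic, i.e.\ $|A|^2\not\equiv0$.

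For the inequality $\Index(\Sigma)\ge n+2$ I would feed in the ambient Euclidean coordinates $x_1,\dots,x_{n+2}$ of $\R^{n+2}\supset S^{n+1}$, restricted to $\widetilde\Sigma$. Each $x_i$ is anti-invariant, minimality of $\widetilde\Sigma^n$ in $S^{n+1}$ gives $\Delta_{\widetilde\Sigma}x_i=-n\,x_i$, hence $\widetilde L x_i=|A|^2x_i$, and therefore for every $a\in\R^{n+2}$
\[
\int_{\widetilde\Sigma}\langle a,x\rangle\,\widetilde L\langle a,x\rangle=\int_{\widetilde\Sigma}|A|^2\,\langle a,x\rangle^2 .
\]
By real-analyticity of $\widetilde\Sigma$ the set $\{|A|^2>0\}$ is dense; and for $a\ne0$ the function $\langle a,x\rangle$ is not identically zero, since a closed minimal hypersurface of $S^{n+1}$ lying in a hyperplane must be a totally geodesic great sphere. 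So the right-hand side is strictly positive for all $a\ne0$; in particular $x_1|_{\widetilde\Sigma},\dots,x_{n+2}|_{\widetilde\Sigma}$ are linearly independent and $\widetilde L$ is positive-definite on the $(n+2)$-dimensional subspace $\Span\{x_1,\dots,x_{n+2}\}\subset\mathcal A$, whence $\Index(\Sigma)\ge n+2$.

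For the sharpness, a cubic isoparametric minimal hypersurface $M^n\subset S^{n+1}$ ($n\in\{3,6,12,24\}$) is the zero set $\{F=0\}\cap S^{n+1}$ of a homogeneous cubic $F$ on $\R^{n+2}$; as $F$ is odd, $M$ is $A$-invariant, its unit normal is proportional to the even field $\nabla F$ so $\nu\circ A=\nu$, and $\Sigma:=M/A\subset\R P^{n+1}$ is an embedded one-sided minimal hypersurface with $|A|^2\equiv2n$ a nonzero constant; being unstable, it has $\Index(\Sigma)\ge n+2$ by the previous step. For the reverse inequality $\widetilde L=\Delta_M+3n$ has constant potential, so $\Index(\Sigma)$ equals the number of Laplace eigenvalues of $M$ strictly below $3n$ realized on anti-invariant eigenfunctions. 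Here I would invoke Solomon's representation-theoretic determination of the spectrum of $\Delta_M$ \cite{solomon1990harmonic,solomon1990harmonic2}: its eigenfunctions are graded by the degree $k$ of the generating harmonic polynomials on $\R^{n+2}$, $A$ acts on the degree-$k$ part by $(-1)^k$, and the only eigenvalue below $3n$ occurring on odd-degree (hence anti-invariant) harmonics is $n$, carried by the linear functions $x_1,\dots,x_{n+2}$ with multiplicity $n+2$. Thus $\widetilde L|_{\mathcal A}$ has exactly $n+2$ positive eigenvalues and $\Index(\Sigma)=n+2$.

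The reduction to the double cover, the parity and sign bookkeeping, the identity $\widetilde L x_i=|A|^2x_i$, and the analyticity input are all routine, so the inequality part is short. The real obstacle is the sharpness: one must extract the $\Z/2$-equivariant refinement of Solomon's (fairly involved) spectral data, i.e.\ check that no anti-invariant Laplace eigenfunction of $M$ beyond the restricted linear functions has eigenvalue below $3n$ --- equivalently, that odd-degree harmonics contribute exactly $n+2$ to the $S^{n+1}$-index of $M$. That verification, carried out over $n\in\{3,6,12,24\}$, is where the substantive computation lies.
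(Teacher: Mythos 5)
Your proposal follows essentially the same route as the paper: pass to the connected antipodal double cover, identify $\Ind(\Sigma)$ with the odd (anti-invariant) index of the lifted Jacobi operator, use the $(n+2)$-dimensional family of odd coordinate functions $\langle a,x\rangle$ (which satisfy $\widetilde L\langle a,x\rangle=|\sff|^2\langle a,x\rangle$) to get the lower bound, and invoke Solomon's spectrum of $\Delta+3n$ for sharpness on the cubic isoparametric examples. The lower-bound half is complete and correct. For sharpness you correctly identify the remaining task but leave it open; the paper's resolution is that the eigenvalues below $3n$ are $0,n,2n,2n+2$, with the $0$-, $2n$- and $(2n+2)$-eigenspaces all even: the constants and the $f_w=\langle w,N\rangle$ (with $N=\tfrac13 DF$ quadratic) are visibly even, while the $(2n+2)$-eigenspace is spanned by isometry-group translates of the explicit even function $\zeta^2-\tfrac{1}{n+3}\nu^2$, and these translates stay even because the group acts linearly on $\R^{n+2}$ and hence commutes with $-\Id$. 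Note that your heuristic ``$A$ acts on the degree-$k$ part by $(-1)^k$'' is not literally how that last eigenspace sits (its generators mix restrictions of degree-$2$ and degree-$4$ polynomials), though the parity conclusion you assert is correct.
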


In addition, we establish the following theorem by computing the index of the Lawson surfaces, using work of Kapouleas and Wiygul \cite{kapouleas2020index}:

\begin{thm}\label{thm: RP^3}
There exist closed embedded two-sided minimal surfaces of every odd index in $\R P^3$.
\end{thm}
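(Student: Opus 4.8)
The plan is to realize the desired surfaces as quotients of the Lawson surfaces $\xi_{g,1}$ by the antipodal map. Recall that the symmetry group of $\xi_{g,1}$ (genus $g$) contains the rotation by $2\pi/(g+1)$ about one geodesic circle and the rotation by $\pi$ about the polar geodesic circle; composing these shows that when $g$ is odd the antipodal map $a=-\mathrm{id}$ of $S^{3}$ preserves $\xi_{g,1}$. For odd $g$ this $a$ acts freely on $\xi_{g,1}$ (it has no fixed point on $S^{3}$) and preserves the coorientation --- equivalently, it sends each of the two components of $S^{3}\setminus\xi_{g,1}$ to itself, exactly as for the Clifford torus $\xi_{1,1}$ --- so $\widehat{\xi}_{g}:=\xi_{g,1}/a$ is a closed embedded \emph{two-sided} minimal surface in $\mathbb{R}P^{3}=S^{3}/a$, of genus $(g+1)/2$. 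The first step is to record these facts precisely from Lawson's construction.

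The second step reduces the index of $\widehat{\xi}_{g}$ to an equivariant index of $\xi_{g,1}$. Since $a$ preserves the coorientation, pullback along the double cover $\xi_{g,1}\to\widehat{\xi}_{g}$ identifies $C^{\infty}(\widehat{\xi}_{g})$ with the space $C^{\infty}(\xi_{g,1})^{a}$ of $a$-invariant functions and intertwines the two Jacobi operators (both of the form $\Delta+|A|^{2}+2$, the ambient Ricci term being unchanged under the local isometry $S^{3}\to\mathbb{R}P^{3}$). Hence $\operatorname{Index}(\widehat{\xi}_{g})$ equals the number of negative eigenvalues, with multiplicity, of the Jacobi operator of $\xi_{g,1}$ acting on $a$-invariant functions. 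Writing $V_{g}^{-}$ for the negative eigenspace of that operator --- an $a$-invariant space, since $a$ commutes with the operator --- and splitting $V_{g}^{-}=V_{g}^{-,+}\oplus V_{g}^{-,-}$ into the $(\pm 1)$-eigenspaces of $a$, we get $\operatorname{Index}(\widehat{\xi}_{g})=\dim V_{g}^{-,+}=\tfrac12\bigl(\dim V_{g}^{-}+\operatorname{tr}(a\mid V_{g}^{-})\bigr)$; and $\dim V_{g}^{-}=\operatorname{Index}(\xi_{g,1})=2g+3$ by Kapouleas and Wiygul \cite{kapouleas2020index}.

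The third step is to evaluate $\operatorname{tr}(a\mid V_{g}^{-})$, the statement to aim for being that it equals $-3$ for every odd $g$; then $\operatorname{Index}(\widehat{\xi}_{g})=g$, which ranges over all positive odd integers as $g$ does, so the surfaces $\widehat{\xi}_{g}$ prove the theorem. For the base case $g=1$ this is a direct computation on the Clifford torus: $V_{1}^{-}$ is the span of the constants and the (four-dimensional) first nonconstant Laplace eigenspace, and $a$ acts on the former by $+1$ and, being the half-period translation of the flat torus, on the latter by $-1$, whence $\operatorname{tr}(a\mid V_{1}^{-})=1-4=-3$ and $\operatorname{Index}(\widehat{\xi}_{1})=1$. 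For general odd $g$ the plan is to feed the involution $a$ into the analysis of Kapouleas and Wiygul, which exhibits all $2g+3$ negative directions and organizes them according to the symmetry group of $\xi_{g,1}$, and to read off the character of $a$ on $V_{g}^{-}$ from that decomposition; the target outcome is that exactly $g$ of the negative directions are $a$-invariant. This equivariant refinement of their index computation is the main obstacle; by comparison, the other ingredients (invariance and free action of $a$ on $\xi_{g,1}$ for odd $g$, coorientation-preservation, and the identification of the Jacobi operators) are routine.
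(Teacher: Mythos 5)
Your overall strategy coincides with the paper's: quotient the Lawson surfaces $\xi_{m-1,1}$ ($m$ even, genus $g=m-1$ odd) by the antipodal map $\tau$, observe that the quotient is two-sided and embedded in $\R P^3$, identify $\Ind(\overline{\xi}_{m-1,1})$ with the number of negative Jacobi eigenvalues of $\xi_{m-1,1}$ whose eigenfunctions are $\tau$-even, and use the Kapouleas--Wiygul total count $\Ind(\xi_{m-1,1})=2m+1$ together with the Clifford torus (index $1$, by do Carmo--Ritor\'e--Ros) to cover every odd index. Your reduction formula $\Ind(\widehat{\xi}_g)=\tfrac12\bigl(\dim V_g^- + \operatorname{tr}(a\mid V_g^-)\bigr)$ and your base-case computation on the Clifford torus are both correct.

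However, there is a genuine gap: the entire content of the theorem for $g\ge 3$ is the claim $\operatorname{tr}(a\mid V_g^-)=-3$, i.e., that exactly $m-1$ of the $2m+1$ negative directions are $\tau$-even, and you explicitly leave this as a ``plan'' and a ``target outcome'' rather than proving it. This is not a routine bookkeeping step that falls out of Kapouleas--Wiygul as stated: their paper computes the index by decomposing with respect to the reflections $\refl_{\Sigma^0}$ and $\refl_{\Sigma^{\pi/2}}$ only, which does not separate $\tau$-even from $\tau$-odd eigenfunctions inside the dominant block $V^{++}$ (of index $2m-1$). The paper has to write $\tau=\refl_{\Sigma^0}\circ\refl_{\Sigma^{\pi/2}}\circ\refl_{\Sigma_0}\circ\refl_{\Sigma_{\pi/2}}$, introduce the finer decomposition $V^{++}=V^{++}_{++}\oplus V^{++}_{+-}\oplus V^{++}_{-+}\oplus V^{++}_{--}$ with respect to the additional reflections $\refl_{\Sigma_0},\refl_{\Sigma_{\pi/2}}$, and then determine each $\Ind(V^{++}_{\pm\pm})$ by combining Dirichlet--Neumann monotonicity with the domain-decomposition inequalities of Kapouleas--Wiygul (their Proposition A.1 applied to the $\tfrac m2$ isometric pieces of $\Omu^{++}_{++}\cap\xi_{m-1,1}$, using their Lemmas 6.6--6.7 that give index $1$ and nullity $0$ for the all-Neumann piece and index $0$, nullity $1$ for the mixed piece). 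One also needs the symmetry types of the first eigenfunctions in $V^{+-}$ and $V^{-+}$ to see that those two negative directions are $\tau$-odd. Until you supply this equivariant refinement (or an equivalent argument), the proof is incomplete for every odd index $\ge 3$.
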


 \subsection*{Acknowledgments.}
  The author is grateful for the useful discussions and suggestions of Otis Chodosh, Alejandra Ramirez-Luna, Brian White, and Yujie Wu. The author also wants to thank the reviewer for their careful reading of the manuscript and their detailed and constructive comments. The author is partially sponsored by the Ric Weiland Graduate Fellowship at Stanford University.

\section{Preliminaries}
\subsection{Schr\"odinger-type operators on involutive manifolds.}
Let $(\Sigma^n,g)$ be a closed Riemannian manifold. Let $V \in C^\infty(\Sigma)$ and consider the Schr\"odinger operator $-\Delta + V$. Then standard elliptic theory tells us that the eigenvalues of $L = -\Delta + V$ are discrete and bounded from below, and we can list them in increasing order as $\lambda_1 < \lambda_2 \le \lambda_3 \le \dots \to \infty$. It is well-known that we have the following variational characterization of the eigenvalues: 
\begin{align}\label{eqn: max-min}
\lambda_{k}=\max_{S\in S_{k-1}}\min_{u\in S^{\perp}, u \neq 0}\frac{\int_{\Sigma}|\nabla u|^2 + Vu^2\, d\Sigma}{\int_{\Sigma}u^2\, d\Sigma},
\end{align}
where $S_{k-1}$ denotes the collection of all the subspaces of $W^{1,2}(\Sigma)$ of dimension $k-1$ and $S^\perp$ denotes the $L^2$-orthogonal complement of $S$ in $W^{1,2}(\Sigma)$. 
This value can be attained and any nonzero function $u$ attaining this value is an eigenfunction with eigenvalue $\lambda_k$.
\begin{lem}
Let $V_1, V_2 \in C^\infty(\Sigma)$ be such that $V_1 \le V_2$ everywhere and $V_1 \not \equiv V_2$. Consider two Schr\"odinger operators $L_1 = -\Delta + V_1$ and $L_2 = -\Delta + V_2$. Let $\lambda_k(L_i)$ be the $k^\text{th}$ eigenvalue of the operator $L_i$. Then we have $\lambda_k(L_1) < \lambda_k(L_2)$ for all $k$.
\end{lem}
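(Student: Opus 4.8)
The plan is to get the non-strict inequality $\lambda_k(L_1)\le\lambda_k(L_2)$ straight from the variational characterization \eqref{eqn: max-min}, and then to promote it to a strict inequality using unique continuation. Write $Q_i(u)=\int_\Sigma |\nabla u|^2+V_iu^2\,d\Sigma$ and fix an $L^2$-orthonormal family $\phi_1,\dots,\phi_{k-1}$ of eigenfunctions of $L_1$ for $\lambda_1(L_1),\dots,\lambda_{k-1}(L_1)$, with $S_0=\Span\{\phi_1,\dots,\phi_{k-1}\}\in S_{k-1}$. Since $V_1\le V_2$ gives $Q_1(u)\le Q_2(u)$ for all $u$, \eqref{eqn: max-min} applied to $L_2$, together with the standard fact (seen by expanding in the $L_1$-eigenbasis) that the $L_1$-Rayleigh quotient restricted to $S_0^\perp$ has infimum $\lambda_k(L_1)$, attained exactly on the $\lambda_k(L_1)$-eigenspace of $L_1$, yields
\begin{align*}
\lambda_k(L_1)=\min_{u\in S_0^\perp,\,u\neq0}\frac{Q_1(u)}{\int_\Sigma u^2\,d\Sigma}\le \min_{u\in S_0^\perp,\,u\neq0}\frac{Q_2(u)}{\int_\Sigma u^2\,d\Sigma}\le \lambda_k(L_2).
\end{align*}

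To obtain strictness I would argue by contradiction, assuming $\lambda_k(L_1)=\lambda_k(L_2)=:\lambda$. Then the middle term above also equals $\lambda$, and by the direct method (a minimizing sequence normalized in $L^2$ is bounded in $W^{1,2}$ because $V_2$ is bounded, so it subconverges weakly; conclude with weak lower semicontinuity of the Dirichlet energy and Rellich compactness) this infimum is attained by some $\psi\in S_0^\perp$ with $\int_\Sigma\psi^2\,d\Sigma=1$ and $Q_2(\psi)=\lambda$. Since $\lambda=Q_2(\psi)\ge Q_1(\psi)\ge\lambda$ (the last step because $\psi\in S_0^\perp$ has unit $L^2$-norm), we get $Q_1(\psi)=Q_2(\psi)=\lambda$. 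This has two consequences: first, $\int_\Sigma(V_2-V_1)\psi^2\,d\Sigma=Q_2(\psi)-Q_1(\psi)=0$, so $\psi$ vanishes on the set $U=\{V_1<V_2\}$, which is nonempty and open because $V_2-V_1\ge0$ is continuous and not identically zero; second, $\psi$ attains the minimum of the $L_1$-Rayleigh quotient over $S_0^\perp$, hence $\psi$ is a nonzero (smooth, by elliptic regularity) eigenfunction of $L_1=-\Delta+V_1$.

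Finally I would invoke Aronszajn's strong unique continuation theorem for second-order elliptic operators: a solution of $(-\Delta+V_1)\psi=\lambda\psi$ on the connected manifold $\Sigma$ that vanishes on the nonempty open set $U$ must vanish identically, contradicting $\psi\neq0$. Hence $\lambda_k(L_1)<\lambda_k(L_2)$ for every $k$. The only substantial ingredient is this last step (and implicitly the connectedness of $\Sigma$, which holds in the intended applications); everything else is bookkeeping with the min-max formula. The one point worth flagging is that $\psi$, a priori only a minimizer for the $L_2$-problem on $S_0^\perp$, is forced by the collapse of the inequality chain to also minimize the $L_1$-Rayleigh quotient there, which is exactly what makes it a genuine $L_1$-eigenfunction and lets unique continuation bite.
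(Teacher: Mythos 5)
Your proof is correct and follows essentially the same strategy as the paper: the non-strict inequality from the min-max characterization, then, assuming equality, extraction of a single function whose $L_1$- and $L_2$-Rayleigh quotients both collapse to $\lambda$, forcing it to be an $L_1$-eigenfunction vanishing on the open set $\{V_1<V_2\}$, which unique continuation rules out. The only difference is bookkeeping: you fix the concrete subspace $S_0$ spanned by the first $k-1$ eigenfunctions of $L_1$, whereas the paper argues through the collection of all subspaces attaining the max-min for $L_2$; your version is slightly more direct and equally valid.
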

\begin{proof}
Since for any function $u \in W^{1,2}(\Sigma)$, we have
$\int_{\Sigma}|\nabla u|^2 + V_1u^2\, d\Sigma \le \int_{\Sigma}|\nabla u|^2 + V_2u^2\, d\Sigma$, it follows from the above variational characterization of the eigenvalues that $\lambda_k(L_1) \le \lambda_k(L_2)$ for all $k$. 

Now for the sake of contradiction suppose for some $k$ that $\lambda_k(L_1) = \lambda_k(L_2) := \Lambda$. 

Let $\cS : = \{S \in S_{k-1} \mid S \text{ attains the value $\Lambda$ in \eqref{eqn: max-min} for $L_2$}\}$.

Then for any space $\tilde{S} \in S_{k-1}$, $\tilde{S} \not \in \cS$, there exists $0 \neq \tilde{u} \in \tilde{S}$ such that 
$\frac{\int_{\Sigma}|\nabla \tilde{u}|^2 + V_2\tilde{u}^2\, d\Sigma}{\int_{\Sigma}\tilde{u}^2\, d\Sigma} < \Lambda$. Since $V_1 \le V_2$, we have that $\frac{\int_{\Sigma}|\nabla \tilde{u}|^2 + V_1\tilde{u}^2\, d\Sigma}{\int_{\Sigma}\tilde{u}^2\, d\Sigma} < \Lambda$ as well. This shows $\tilde{S}$ cannot attain the value $\Lambda$ in \eqref{eqn: max-min} for $L_1$.

However, since $\Lambda$ is also the $k^\text{th}$ eigenvalue of $L_1$, there exists $S\in \cS$ such that $S$ attains the value $\Lambda$ in \eqref{eqn: max-min} for $L_1$. That is, 
\begin{align}\label{eqn: V_1}\min_{u\in S^{\perp}, u \neq 0} \frac{\int_{\Sigma}|\nabla {u}|^2 + V_1 {u}^2\, d\Sigma}{\int_{\Sigma}{u}^2\, d\Sigma} = \Lambda.\end{align}

On the other hand, from the definition of $\cS$ we have that $S$ also attains the value $\Lambda$ in \eqref{eqn: max-min} for $L_2$. So there exists $0 \neq \hat{u} \in S^\perp$ such that

\begin{align}\label{eqn: V_2}\frac{\int_{\Sigma}|\nabla \hat{u}|^2 + V_2\hat{u}^2\, d\Sigma}{\int_{\Sigma}\hat{u}^2\, d\Sigma} = \min_{u\in S^{\perp}, u \neq 0} \frac{\int_{\Sigma}|\nabla {u}|^2 + V_2 {u}^2\, d\Sigma}{\int_{\Sigma}{u}^2\, d\Sigma} = \Lambda.\end{align}

Combining equations \eqref{eqn: V_1} and \eqref{eqn: V_2} with the inequality $V_1 \le V_2$ shows that 
$$\Lambda = \min_{u\in S^{\perp}, u \neq 0} \frac{\int_{\Sigma}|\nabla {u}|^2 + V_1 {u}^2\, d\Sigma}{\int_{\Sigma}{u}^2\, d\Sigma} \le
\frac{\int_{\Sigma}|\nabla \hat{u}|^2 + V_1\hat{u}^2\, d\Sigma}{\int_{\Sigma}\hat{u}^2\, d\Sigma} \le
\frac{\int_{\Sigma}|\nabla \hat{u}|^2 + V_2\hat{u}^2\, d\Sigma}{\int_{\Sigma}\hat{u}^2\, d\Sigma} = \Lambda,$$
so we must have that
$$\frac{\int_{\Sigma}|\nabla \hat{u}|^2 + V_1\hat{u}^2\, d\Sigma}{\int_{\Sigma}\hat{u}^2\, d\Sigma} =
\frac{\int_{\Sigma}|\nabla \hat{u}|^2 + V_2\hat{u}^2\, d\Sigma}{\int_{\Sigma}\hat{u}^2\, d\Sigma} = \Lambda,$$
i.e., $\hat{u}$ attains the value $\Lambda = \lambda_k(L_1) = \lambda_k(L_2)$ in \eqref{eqn: max-min} for both $L_1$ and $L_2$. 

Thus $\hat u$ is an eigenfunction of both $L_1$ and $L_2$, and $\int_{\Sigma}|\nabla \hat{u}|^2 + V_1\hat{u}^2\, d\Sigma = \int_{\Sigma}|\nabla \hat{u}|^2 + V_2\hat{u}^2\, d\Sigma$. 
Hence we have $\int_{\Sigma}(V_2 - V_1)\hat{u}^2 d\Sigma = 0$. This shows $\hat{u}$ vanishes on the open set $\{V_2 \neq V_1\}$, which by the unique continuation principle (see, e.g., \cite{aronszajn1956unique}) implies $\hat{u} \equiv 0$, a contradiction.

\end{proof}

Now suppose there exists a Riemannian
involution $\tau: (\Sigma, g) \to (\Sigma, g)$, namely a smooth isometry such that
$$\tau \circ \tau = \id, \, \tau \neq \id.$$
Then given a linear function space $X$ consisting of functions defined on $\Sigma$ we introduce the subspaces of even and odd functions with respect to the action of $\tau$:
$$X_\cE = \{\phi \in X \mid \phi \circ \tau = \phi\} , \,  X_\cO = \{\phi \in X \mid \phi \circ \tau = -\phi\}.$$

In particular, with respect to the involution $\tau$, we have the $L^2$-orthogonal decompositions
\begin{align*}
C^\infty(\Sigma) &= C^\infty(\Sigma)_\cE \oplus C^\infty(\Sigma)_\cO \\
W^{1,2}(\Sigma) &= W^{1,2}(\Sigma)_\cE \oplus W^{1,2}(\Sigma)_\cO,
\end{align*}

Given a function $V\in C^\infty(\Sigma)_\cE$ we study the operator $L = -\Delta + V$. For each eigenvalue $\lambda$, let $W_\lambda$ be the corresponding space of eigenfunctions. Then it is easy to see that we have the $L^2$-orthogonal decomposition $W_\lambda =  (W_\lambda)_\cE \oplus (W_\lambda)_\cO$.

Let $spec(L)$ denote the set of eigenvalues of $L$, and let $spec_\cE(L)$ and $spec_\cO(L)$ denote the set of even eigenvalues and odd eigenvalues, respectively. That is,
$$\lambda \in spec_\cE(L) \Leftrightarrow \exists  u \in C^\infty(\Sigma)_\cE \setminus \{0\} \text{ such that } L u = \lambda u, $$
$$\lambda \in spec_\cO(L) \Leftrightarrow \exists  u \in C^\infty(\Sigma)_\cO \setminus \{0\} \text{ such that } L u = \lambda u. $$
Then $spec(L) = spec_\cE(L) \cup spec_\cO(L)$ (for proof, see, e.g., \cite[Lemma 16]{ambrozio2019bubbling}).
Let $\lambda_k^\cE(L)$ (resp. $\lambda_k^\cO(L)$) be the $k^\text{th}$ even (resp. odd) eigenvalue of the operator $L$. For $\lambda_k^\cE(L)$ (resp. $\lambda_k^\cO(L)$), we also have the variational characterization as in \eqref{eqn: max-min}, with $W^{1,2}(\Sigma)$ replaced by $W^{1,2}(\Sigma)_\cE$ (resp. $W^{1,2}(\Sigma)_\cO$).
Using this, it is straightforward to deduce
\begin{lem}\label{lem: even and odd comparison}
Let $(\Sigma^n,g)$ be an $n$-dimensional closed Riemannian manifold with a Riemannian involution $\tau$. Let $V_1, V_2 \in C^\infty(\Sigma)_\cE$ be such that $V_1 \le V_2$ everywhere and $V_1 \not \equiv V_2$. Consider two Schr\"odinger operators $L_1 = -\Delta + V_1$ and $L_2 = -\Delta + V_2$. Let $\lambda_k^\cE(L_i)$ (resp. $\lambda_k^\cO(L_i)$) be the $k^\text{th}$ even (resp. odd) eigenvalue of the operator $L_i$. Then we have $\lambda_k^\cE(L_1) < \lambda_k^\cE(L_2)$ and $\lambda_k^\cO(L_1) < \lambda_k^\cO(L_2)$ for all $k$.
\end{lem}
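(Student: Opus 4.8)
The plan is to repeat the argument of the preceding lemma essentially verbatim, with the full Sobolev space $W^{1,2}(\Sigma)$ replaced throughout by the even subspace $W^{1,2}(\Sigma)_\cE$ (for the claim about $\lambda_k^\cE$) or the odd subspace $W^{1,2}(\Sigma)_\cO$ (for the claim about $\lambda_k^\cO$), and using the variational characterization of $\lambda_k^\cE(L_i)$ (resp. $\lambda_k^\cO(L_i)$) noted just above the lemma. First, since $\int_\Sigma |\nabla u|^2 + V_1 u^2\, d\Sigma \le \int_\Sigma |\nabla u|^2 + V_2 u^2\, d\Sigma$ for every $u$, the restricted max-min formula gives $\lambda_k^\cE(L_1) \le \lambda_k^\cE(L_2)$ and $\lambda_k^\cO(L_1) \le \lambda_k^\cO(L_2)$ at once. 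For strictness, suppose for contradiction that equality holds for some $k$, say with common value $\Lambda$; define $\cS$ to be the collection of $(k-1)$-dimensional subspaces of $W^{1,2}(\Sigma)_\cE$ (resp. $W^{1,2}(\Sigma)_\cO$) that attain $\Lambda$ in the restricted formula for $L_2$, and run the identical chain of inequalities to produce a nonzero $\hat u$ in the even (resp. odd) subspace, lying in the relevant orthogonal complement, that attains the value $\Lambda$ in the restricted formula for both $L_1$ and $L_2$. Equating the two Rayleigh quotients of $\hat u$ then forces $\int_\Sigma (V_2 - V_1)\hat u^2\, d\Sigma = 0$, so $\hat u$ vanishes on the nonempty open set $\{V_1 \neq V_2\}$.

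The one point requiring an extra remark — and the place where the hypothesis $V_i \in C^\infty(\Sigma)_\cE$ is essential — is that this constrained minimizer $\hat u$ is in fact a genuine eigenfunction of $L_i$ on all of $\Sigma$, so that the unique continuation principle of \cite{aronszajn1956unique} applies and forces $\hat u \equiv 0$, the desired contradiction. A priori, the Euler–Lagrange equation for minimizing the Rayleigh quotient over $W^{1,2}(\Sigma)_\cE$ only says that $L_i \hat u - \Lambda \hat u$ is $L^2$-orthogonal to every even function, i.e. that it is odd. But $\tau$ is an isometry, so it commutes with $\Delta$; since $V_i$ and $\hat u$ are even, $L_i \hat u = -\Delta \hat u + V_i \hat u$ is again even, hence $L_i \hat u - \Lambda \hat u$ is simultaneously even and odd and therefore vanishes identically. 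The odd case is symmetric: if $\hat u$ is odd then $L_i \hat u$ is odd, while minimizing over $W^{1,2}(\Sigma)_\cO$ forces $L_i \hat u - \Lambda \hat u$ to be even, so again it is zero. In either case $\hat u$ solves $-\Delta \hat u + (V_i - \Lambda)\hat u = 0$ on $\Sigma$ while vanishing on an open set.

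I do not anticipate any genuine obstacle here; the argument is a routine adaptation of the previous lemma, and this is why the paper asserts it is "straightforward to deduce." The only subtlety worth flagging in the writeup is the observation of the second paragraph — that passing to the even or odd subspace does not damage the ellipticity/unique-continuation step — which relies precisely on $L$ preserving the $\cE/\cO$ decomposition, equivalently on $V$ being even.
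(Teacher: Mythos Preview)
Your proposal is correct and is exactly the approach the paper has in mind: the paper does not give a separate proof but simply says that, using the restricted variational characterization, it is ``straightforward to deduce'' the lemma from the preceding one. Your extra paragraph explaining why the constrained minimizer $\hat u$ is a genuine eigenfunction (because $L$ preserves the $\cE/\cO$ decomposition when $V$ is even) is a detail the paper leaves implicit, and it is the right thing to spell out.
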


\subsection{Morse index of minimal submanifolds}
Let $\Phi:(\Sigma^n,g) \rightarrow (M^{m},\bar{g})$ be a minimal isometric immersion of an $n$-dimensional closed Riemannian manifold $(\Sigma,g)$. Let $C^\infty(\Phi^*TM))$ denote the space of all $C^\infty$-vector fields along $\Phi$. For any $V\in C^\infty(\Phi^*TM)$, let $\{\Phi_t\}$ be a $C^\infty$-one-parameter family of immersions of $\Sigma$ into $M$ such that $\Phi_0 = \Phi$ and $\frac{d}{dt}\Phi_t(x)\vert_{t = 0} =
V_x$ for each $x \in \Sigma$. Then we have the classical second variational formula

\begin{thm}[Second variation formula]
The second variation of the volume $|\Phi_t(\Sigma)|$ at time $t = 0$ is given by
\begin{align*}
\frac{d^{2}}{dt^{2}} |\Phi_t(\Sigma)|\biggr\vert_{t = 0}=-\int_{\Sigma}\langle  \cL_{\Sigma}V^N,V^N \rangle d\Sigma,
\end{align*}
where $V^N$ is the normal
component of $V$, $\cL_{\Sigma}$ is the elliptic Jacobi operator on the normal bundle $N(\Sigma)$ defined by
$$\cL_{\Sigma}(X):=\Delta^{\perp}X+(\sum_{i=1}^{n}R^{M}(X,e_{i})e_{i})^{\perp}+\sum_{i,j=1}^{n} \langle  \sff_\Sigma(e_{i},e_{j}),X \rangle \sff_\Sigma(e_{i},e_{j}),$$
and ${\Delta}^{\perp}$ is the normal Laplacian on the normal bundle $N(\Sigma)$ given by
$${\Delta}^{\perp}X=\sum_{i=1}^{n}(\overline{\nabla}^{\perp}_{e_{i}}\overline{\nabla}^{\perp}_{e_{i}}X-\overline{\nabla}^{\perp}_{(\overline{\nabla}_{e_{i}}e_{i})^{T}}X).$$
Here, $\{e_{1},\ldots,e_{n}\}$ is an orthonormal basis of $T\Sigma$, $\overline{\nabla}$ is the connection of $M$, $\overline{\nabla}^{\perp}$ is the normal connection of $\Sigma$ in $M$, $\sff_\Sigma$ is the second fundamental form of $\Sigma$ in $M$, and $R^{M}$ is the curvature tensor of $M$. 
\end{thm}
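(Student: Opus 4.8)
\emph{Proof proposal.} The plan is to carry out the classical second-variation computation (cf.\ \cite{simons1968minimal}), reducing first to normal variations. Since $|\Phi_t(\Sigma)|$ is unchanged when $\Phi_t$ is precomposed with a diffeomorphism of $\Sigma$, I would first replace $\Phi_t$ by $\Phi_t\circ\psi_{-t}$, where $\psi_t$ is the flow on $\Sigma$ of the vector field corresponding under $\Phi_*$ to the tangential part $V^T$; this has the same volume function but variation field $V^N$, so we may assume $V = V^N$ is normal. Writing $g_t = \Phi_t^*\bar g$ for the induced metrics and $|\Phi_t(\Sigma)| = \int_\Sigma \sqrt{\det g_t}\,dx$ in local coordinates, differentiating $\sqrt{\det g_t}$ twice gives the pointwise identity
\[
\partial_t^2 \sqrt{\det g_t} = \sqrt{\det g_t}\,\Big(\tfrac12 \Tr_{g_t}\ddot g_t - \tfrac12 |\dot g_t|^2_{g_t} + \tfrac14 (\Tr_{g_t}\dot g_t)^2\Big),
\]
with $\dot g_t = \partial_t g_t$ and $\ddot g_t = \partial_t^2 g_t$, so that everything reduces to computing $\dot g$ and $\ddot g$ at $t=0$.

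For these I would use torsion-freeness of $\overline\nabla$ in the form $\overline\nabla_{\partial_t}\Phi_{t*}\partial_i = \overline\nabla_{\partial_i} V_t$ (with $V_t = \partial_t\Phi_t$) together with metric compatibility. This gives $\dot g_{ij}|_{t=0} = 2\langle\overline\nabla_{e_i}V, e_j\rangle = -2\langle V, \sff_\Sigma(e_i,e_j)\rangle$ since $V$ is normal; in particular $\Tr_g\dot g|_{t=0} = -2\langle V, \sum_i\sff_\Sigma(e_i,e_i)\rangle = 0$ by minimality, so the last term drops out. Differentiating once more and commuting $\overline\nabla_{\partial_t}$ past $\overline\nabla_{\partial_i}$ introduces the curvature term $R^M(V,e_i)V$ together with an acceleration term $\overline\nabla_{e_i}A$, where $A = \overline\nabla_{\partial_t}\Phi_{t*}\partial_t|_{t=0}$; the latter contributes $\sum_i\langle\overline\nabla_{e_i}A, e_i\rangle = \operatorname{div}_\Sigma A^T - \langle A^N, \sum_i\sff_\Sigma(e_i,e_i)\rangle$ to the integrand, which integrates to zero over $\Sigma$ by the divergence theorem and minimality (alternatively, one may simply take $\Phi_t(x) = \exp^M_{\Phi(x)}(tV(x))$, for which $A \equiv 0$). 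What remains is, up to an exact divergence, $\tfrac12\Tr_g\ddot g|_{t=0} = \sum_i|\overline\nabla_{e_i}V|^2 + \sum_i\langle R^M(V,e_i)V, e_i\rangle$, while $\tfrac12|\dot g|^2_g|_{t=0} = 2\sum_{i,j}\langle V, \sff_\Sigma(e_i,e_j)\rangle^2$.

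Finally I would assemble the pieces and integrate by parts. Splitting $\overline\nabla_{e_i}V$ into its tangential part $-\sum_j\langle V, \sff_\Sigma(e_i,e_j)\rangle e_j$ and its normal part $\overline\nabla^\perp_{e_i}V$ gives $\sum_i|\overline\nabla_{e_i}V|^2 = \sum_{i,j}\langle V, \sff_\Sigma(e_i,e_j)\rangle^2 + \sum_i|\overline\nabla^\perp_{e_i}V|^2$, and $\int_\Sigma\sum_i|\overline\nabla^\perp_{e_i}V|^2\,d\Sigma = -\int_\Sigma\langle\Delta^\perp V, V\rangle\,d\Sigma$ by the divergence theorem applied to $X\mapsto\langle\overline\nabla^\perp_X V, V\rangle$. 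Collecting everything,
\[
\frac{d^2}{dt^2}|\Phi_t(\Sigma)|\,\big|_{t=0} = \int_\Sigma\Big(-\langle\Delta^\perp V, V\rangle - \sum_{i,j}\langle V, \sff_\Sigma(e_i,e_j)\rangle^2 + \sum_i\langle R^M(V,e_i)V, e_i\rangle\Big)\,d\Sigma,
\]
and the symmetries of $R^M$ give $\sum_i\langle R^M(V,e_i)V, e_i\rangle = -\langle(\sum_i R^M(V,e_i)e_i)^\perp, V\rangle$ (using that $V$ is normal), so the integrand is exactly $-\langle\cL_\Sigma V, V\rangle$ with $V = V^N$, as claimed.

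The computation carries no conceptual difficulty; the main obstacle is organizational — keeping the tangential/normal decompositions of $\overline\nabla V$ and of $A$ consistent, and fixing the curvature sign conventions so that the three terms of $\cL_\Sigma$ emerge with the correct signs. The two places where minimality is genuinely used are the vanishing of $\Tr_g\dot g$ and the vanishing of the integrated acceleration term, both of which reduce to $\sum_i\sff_\Sigma(e_i,e_i) = 0$.
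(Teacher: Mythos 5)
Your derivation is correct and is precisely the classical computation (reduction to normal variations, the expansion of $\partial_t^2\sqrt{\det g_t}$, the commutation producing the curvature term, and the tangential/normal splitting of $\overline\nabla V$ followed by integration by parts), with all signs consistent with the paper's conventions for $R^M$ and $\Delta^\perp$. The paper itself offers no proof, simply citing this as the classical second variation formula, so your argument supplies exactly the standard derivation it implicitly refers to.
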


Now for $V \in C^\infty(N\Sigma)$, we define the quadratic form $Q_\Sigma$ on $C^\infty(N\Sigma)$ by 
$$Q_\Sigma(V, V) := -\int_{\Sigma}\langle  \cL_{\Sigma}V,V \rangle d\Sigma.$$

For $\Sigma$, we define its \emph{Morse index} $\Ind(\Sigma)$ to be the largest dimension of a linear subspace of $C^\infty(N\Sigma)$ where $Q_\Sigma$ is negative definite.

\subsection{Morse index of minimal hypersurfaces}
In the special case where $\Sigma$ is a two-sided closed minimal hypersurface of $M$, we let $N$ be a unit normal vector field of $\Sigma$. Notice that if the ambient space $M$ is orientable, then two-sideness of $\Sigma$ is equivalent to $\Sigma$ being orientable. For any $V$ in $C^\infty(\Phi^*TM)$, let $\{\Phi_t\}$ be a $C^\infty$-one-parameter family of immersions of $\Sigma$ into $M$ such that $\Phi_0 = \Phi$ and $\frac{d}{dt}\Phi_t(x)\vert_{t = 0} =
V_x$ for each $x \in \Sigma$. 
Then there exists a unique $C^\infty$ function $u_V: \Sigma \to \R$ such that $V^N = u_V N$. And the second variation formula simplifies to 
\begin{center}
    $\displaystyle \frac{d^{2}}{dt^{2}} |\Phi_t(\Sigma)|\biggr\vert_{t = 0}=-\int_{\Sigma}\langle  \cL_{scal} u_V, u_V\rangle d\Sigma$,
\end{center}
where $\cL_{scal}: = \Delta + \Ric_M(N,N) + |\sff_\Sigma|^2$ is the Jacobi
operator on $\Sigma$ acting on scalar-valued functions. 

Now for $u \in C^\infty(\Sigma)$, we can define the quadratic form $Q_{\Sigma,scal}$ on $C^\infty(\Sigma)$ by 
$$Q_{\Sigma,scal}(u, u) := -\int_{\Sigma}\langle \cL_{scal}u,u \rangle d\Sigma.$$

For $\Sigma$, we define its \emph{Morse index}, $\Ind(\Sigma)$, to be the largest dimension of a linear subspace of $C^\infty(\Sigma)$ where $Q_{\Sigma,scal}$ is negative definite. By the theory of elliptic operators on closed manifolds, the Morse index also equals the number of negative eigenvalues (counted with multiplicity) of the Jacobi operator $\cL_{scal}$. 
It is straightforward to check that the definition given here agrees with the definition of Morse index in the general case. In the following, we will omit the subscript $scal$ when there is no confusion. 

We call a $C^2$-function $u\colon \Sigma \to \R$ a \emph{Jacobi field} if it satisfies $\cL u = 0$ on $\Sigma$. 
We define the \emph{nullity} of $\Sigma$, $\Nul(\Sigma)$, as the dimension of the space of Jacobi fields on $\Sigma$. Equivalently, this is the number of $0$-eigenvalues (counted with multiplicity) of $\cL$.  It is well-known that Killing vector fields of the ambient manifold $M$ induce Jacobi fields of $\Sigma$. We call a
Jacobi field on $\Sigma$ \emph{non-exceptional} if it is induced by a Killing field; otherwise we call it \emph{exceptional}.


Now suppose on $\Sigma$ there exists a Riemannian
involution $\tau: (\Sigma, g) \to (\Sigma, g)$.
We define the \emph{even Morse index} $\Ind_\cE(\Sigma)$ (resp. the \emph{odd Morse index} $\Ind_\cO(\Sigma)$) as the largest dimension
of a linear subspace of $C^\infty(\Sigma)_\cE$ (resp. $C^\infty(\Sigma)_\cO$) where $Q_{\Sigma}$ is negative definite. Again, the even (resp. odd) Morse index  equals the number of even (resp. odd) negative eigenvalues  of $\cL$, counted with multiplicity.  
We also define the \emph{even nullity} $\Nul_\cE(\Sigma)$ (resp. the \emph{odd nullity} $\Nul_\cO(\Sigma)$) as the dimension of the space of even (resp. odd) Jacobi fields on $\Sigma$.
Then it is not hard to deduce that (see, e.g., \cite[Section 2.1]{ambrozio2019bubbling})
$$\Ind(\Sigma) = \Ind_\cE(\Sigma) + \Ind_\cO(\Sigma),$$
$$\Nul(\Sigma) = \Nul_\cE(\Sigma) + \Nul_\cO(\Sigma).$$

\section{Minimal hypersurfaces in $S^{n+1}$} \label{sec: sphere}
Let $({\Sigma}^{n}, g) \to (S^{n+1}, \overline{g})$ be a closed connected orientable (hence two-sided) immersed minimal hypersurface of the $(n+1)$-dimensional
Euclidean unit sphere. Let $N$ denote a unit normal vector field on $\Sigma$. Let $\overline{\nabla}$ denote the Levi-Civita connection on $S^{n+1}$, let $\nabla$ denote the induced Levi-Civita connection on $\Sigma$, and let $\Delta$ denote the Laplacian on $\Sigma$. 
Then $\Ric_{S^{n+1}}(N,N) = n$ and the Jacobi operator on $\Sigma$ becomes $\cL = \Delta + |\sff_\Sigma|^2 + n$.

\begin{defn}[Test functions $l_w$, $f_w$, $r_{v,w}$]\label{def: test fcns}
For every fixed vector $w \in \R^{n+2}$, we define functions $l_w: \Sigma \to \R$ and $f_w: \Sigma \to \R$ by 
$$l_w(x) = \langle w, x\rangle,$$
$$f_w(x) =  \langle w, N(x)\rangle$$
for all $x \in  \Sigma$. Here $\langle ,\rangle$ denotes the inner product in $\R^{n+2}$.
For $v,w \in \R^{n+2}$, we further define $r_{v,w}: \Sigma \to \R$ by
$$r_{v,w}: = l_vf_w - l_wf_v.$$
\end{defn}

A standard computation using Gauss and Weingarten formulas shows that
\begin{align*}
\nabla l_w &= w^\top,\\
\nabla f_w &= -A(w^\top),
\end{align*}
where $\top$ denotes the projection onto $T\Sigma$ and $A(X) = - (\overline{\nabla}_X N)^\top$ is the shape operator of $\Sigma$.

Using the minimality of $\Sigma$ and the Codazzi equations, we have 
\begin{align*}
\Delta l_w &= -nl_w,\\
\Delta f_w &= -|\sff_\Sigma|^2f_w.
\end{align*}
Then each nonzero $f_w$ is an eigenfunction of $\cL$ with eigenvalue $-n$.

We define the linear subspaces $\Gamma_1, \Gamma_2, \Gamma_3$ of $C^\infty(M)$ by
$$\Gamma_1: = \{l_w \mid w \in \R^{n+2}\},$$
$$\Gamma_2: = \{f_w \mid w \in \R^{n+2}\},$$
$$\Gamma_3: = \{r_{v,w}\mid v,w \in \R^{n+2}\}.$$

We have the following three simple lemmas regarding these spaces:

\begin{lem}\label{lem: lw}
For $0 \neq w \in \R^{n+2}$, we have $Q_{\Sigma}(l_w, l_w) \le 0$, and $Q_{\Sigma}(l_w,l_w) = 0$ if and only if $\Sigma$ is an equatorial hypersphere. Consequently, for $\Sigma$ not an equatorial hypersphere, $Q_\Sigma$ is negative definite on $\Gamma_1$ and $\dim \Gamma_1 = n+2$.
\end{lem}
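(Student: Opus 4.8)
The plan is to compute $\cL l_w$ explicitly and then read everything off from the resulting sign. The minimality of $\Sigma$ gives $\Delta l_w = -n l_w$ (as recorded just above the statement), and $\cL = \Delta + |\sff_\Sigma|^2 + n$, so the terms $-n l_w$ and $+n l_w$ cancel and we are left with $\cL l_w = |\sff_\Sigma|^2\, l_w$. Substituting into the definition of $Q_\Sigma$ then yields
\[
Q_\Sigma(l_w, l_w) = -\int_\Sigma \langle \cL l_w, l_w \rangle\, d\Sigma = -\int_\Sigma |\sff_\Sigma|^2\, l_w^2\, d\Sigma \le 0,
\]
which is the first assertion, with nothing further to do.

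For the equality discussion I would argue as follows. If $Q_\Sigma(l_w, l_w) = 0$ with $w \neq 0$, then $|\sff_\Sigma|^2\, l_w^2 \equiv 0$ on $\Sigma$. The function $l_w$ is the restriction to $\Sigma$ of a nonzero linear form on $\R^{n+2}$ and solves $(\Delta + n) l_w = 0$; by unique continuation (cf.\ \cite{aronszajn1956unique}), together with connectedness of $\Sigma$, it either vanishes only on a set of measure zero or vanishes identically, i.e.\ $\Sigma \subset w^\perp$. In the first case $|\sff_\Sigma|^2 \equiv 0$, so $\Sigma$ is totally geodesic; in the second case $\Sigma$ lies in the totally geodesic equatorial sphere $S^{n+1} \cap w^\perp$, and being an $n$-dimensional closed immersed submanifold of it (hence a covering of it) $\Sigma$ equals that equatorial hypersphere, again totally geodesic. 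Either way $\Sigma$ is a totally geodesic, i.e.\ equatorial, hypersphere. The converse is immediate, since an equatorial hypersphere has $|\sff_\Sigma|^2 \equiv 0$ and hence $Q_\Sigma(l_w, l_w) = 0$ for all $w$.

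For the "consequently" clause, suppose $\Sigma$ is not an equatorial hypersphere, so $|\sff_\Sigma|^2 \not\equiv 0$. Then the equality analysis just given shows $l_w \not\equiv 0$ for every $w \neq 0$ (otherwise $\Sigma$ would be forced equatorial), so the linear map $\R^{n+2} \to C^\infty(\Sigma)$, $w \mapsto l_w$, is injective and $\dim \Gamma_1 = n+2$. For the same reason, for every nonzero $l_w \in \Gamma_1$ the nonnegative integrand $|\sff_\Sigma|^2 l_w^2$ is positive on a nonempty open set (where $|\sff_\Sigma|^2 > 0$, intersected with the full-measure set $\{l_w \neq 0\}$), so $Q_\Sigma(l_w, l_w) = -\int_\Sigma |\sff_\Sigma|^2 l_w^2\, d\Sigma < 0$; hence $Q_\Sigma$ is negative definite on $\Gamma_1$.

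The computation itself is routine; the only steps needing a little care are the equality case — specifically ruling out that $l_w^2$ could vanish on a positive-measure set without $l_w$ vanishing identically, which is exactly where unique continuation (or real-analyticity of minimal submanifolds) enters — and the small geometric observation that containment in a great hypersphere already forces a closed minimal hypersurface to be totally geodesic. Neither is a genuine obstacle.
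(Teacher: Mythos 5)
Your proof is correct and follows essentially the same route as the paper: both compute $\cL l_w = |\sff_\Sigma|^2 l_w$ and reduce everything to the sign of $-\int_\Sigma |\sff_\Sigma|^2 l_w^2\, d\Sigma$. The only (harmless) difference is in the equality case, where you invoke unique continuation for $l_w$ to control its zero set, whereas the paper argues directly that the open set $\{|\sff_\Sigma|>0\}$ would have to lie in the great hypersphere $w^\perp\cap S^{n+1}$ and hence be totally geodesic, a contradiction; both work, and you additionally spell out the ``consequently'' clause that the paper leaves implicit.
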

\begin{proof}
Using $\Delta l_w = -nl_w$, we have
$$\cL l_w = \Delta l_w  + |\sff_{\Sigma}|^2 l_w +n l_w = |\sff_\Sigma|^2 l_w,$$
so
$$Q_{\Sigma}(l_w,l_w) = -\int |\sff_{\Sigma}|^2 l_w^2 \le 0.$$

If $Q_{\Sigma}(l_w,l_w) = 0$, then $|\sff_{\Sigma}|^2 l_w^2 \equiv 0$. Thus at every point $x \in \Sigma$, either $|\sff_{\Sigma}| = 0$ or $l_w = \langle w, x \rangle = 0$. If $|\sff_{\Sigma}| \not \equiv 0$, then consider the open neighborhood $U$ where $|\sff_{\Sigma}| > 0$. In $U$ we have $\langle w, x \rangle = 0$, so $U$ is contained in the equatorial hypersphere orthogonal to $w$. Then $U$ is totally geodesic, showing $|\sff_{\Sigma}| = 0$ on $U$, which is a contradiction. Hence the condition $Q_{\Sigma}(l_w,l_w) = 0$ implies $|\sff_{\Sigma}| \equiv 0$, that is, $\Sigma$ is totally geodesic. The only such immersed hypersurface is an equator.
\end{proof}

\begin{lem}\cite[Lemma~3.1]{perdomo2001low}\label{lem: fw}
For $\Sigma$ not an equatorial hypersphere, we have $\dim \Gamma_2 = n+2$.
\end{lem}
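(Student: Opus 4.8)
The plan is to show that the linear map $T\colon\R^{n+2}\to C^\infty(\Sigma)$ defined by $w\mapsto f_w$ is injective. Its image is $\Gamma_2$, which therefore has dimension at most $n+2$ no matter what; so injectivity of $T$ immediately gives $\dim\Gamma_2=n+2$. Thus the whole task reduces to showing: if $f_w\equiv 0$ for some $w\in\R^{n+2}$, then $w=0$. So I fix such a $w$ and, assuming $w\neq 0$, aim for a contradiction.

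The first step is to extract a Hessian identity for $l_w$. At each point $x\in\Sigma$ one has the orthogonal decomposition $\R^{n+2}=\R x\oplus T_x\Sigma\oplus\R N(x)$, hence $w=l_w(x)\,x+w^\top+f_w(x)\,N(x)$; since $f_w\equiv 0$ this reads $w^\top=w-l_w\,x$. Differentiating in a tangent direction $X$ (using that the position field $x$ has flat-connection derivative $X$, exactly the kind of Gauss--Weingarten computation already used for $\nabla l_w$ and $\Delta l_w$ in the excerpt) and projecting to $T\Sigma$ gives $\nabla^2 l_w(X,Y)=-\,l_w\,g(X,Y)$, i.e.
\[
\nabla^2 l_w=-\,l_w\,g\qquad\text{on }\Sigma .
\]
Now I split into two cases. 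If $l_w\equiv 0$, then $\langle w,x\rangle=0$ for all $x\in\Sigma$, so the minimal immersion factors through the equatorial hypersphere $S^{n+1}\cap w^\perp$; as this is a connected $n$-manifold and $\Sigma$ is a closed connected $n$-manifold, the factored map is a surjective covering and $\Sigma$ is an equatorial hypersphere — contradicting the hypothesis. If instead $l_w\not\equiv 0$, then $l_w$ is non-constant (a nonzero constant is incompatible with $\Delta l_w=-nl_w$), so $\nabla^2 l_w=-l_w g$ together with Obata's rigidity theorem forces $(\Sigma,g)$ to be isometric to the unit round sphere $S^n$; in particular $\Sigma$ has constant sectional curvature $1$. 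Plugging this into the Gauss equation for $\Sigma\subset S^{n+1}$ yields $\langle A(X),X\rangle\langle A(Y),Y\rangle-\langle A(X),Y\rangle^2=0$ for every orthonormal pair $X,Y\in T_p\Sigma$; evaluating on an orthonormal eigenbasis of the shape operator $A$ shows that at each point at most one principal curvature is nonzero, and then minimality ($\operatorname{tr}A=0$) forces $A\equiv 0$. So $\Sigma$ is totally geodesic, hence an equatorial hypersphere — again a contradiction. (For $n=1$ there is nothing to prove, since every closed minimal hypersurface of $S^2$ is a great circle.) In all cases $w=0$, so $T$ is injective and $\dim\Gamma_2=n+2$.

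I expect the main obstacle to be the passage from the pointwise identity $\nabla^2 l_w=-l_w g$ to an actual contradiction: this is precisely where Obata's theorem carries the weight, after which the Gauss equation plus minimality close the argument by elementary linear algebra on $A$. The one other point needing care is that Obata's theorem is useless in the degenerate case $l_w\equiv 0$ (there is no non-constant solution there), so that case must be disposed of separately by the dimension-and-connectedness observation that $\Sigma\subset S^{n+1}\cap w^\perp$ forces $\Sigma$ to be the equator.
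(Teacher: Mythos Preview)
Your argument is correct. The paper does not supply its own proof of this lemma; it simply cites Perdomo \cite{perdomo2001low}, whose argument is precisely the one you have reconstructed: from $f_w\equiv 0$ one derives the Hessian identity $\nabla^2 l_w=-l_w\,g$, invokes Obata's theorem to force $(\Sigma,g)\cong S^n$ (handling the degenerate case $l_w\equiv 0$ separately), and then concludes that $\Sigma$ is totally geodesic. So your approach matches the cited source.

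One small remark on the finish: once Obata gives $(\Sigma,g)\cong S^n$, instead of the Gauss-equation computation you can also argue spectrally. The $n$-eigenspace of $-\Delta$ on the unit $S^n$ has dimension exactly $n+1$, yet $\Gamma_1=\{l_w:w\in\R^{n+2}\}$ lies in this eigenspace and, by Lemma~\ref{lem: lw}, has dimension $n+2$ since $\Sigma$ is not an equator---a contradiction. This is marginally quicker, but your Gauss-equation route is equally valid and self-contained.
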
 

\begin{lem}\label{lem: rvw}
$r_{v,w}$ are non-exceptional Jacobi fields of $\Sigma$.
\end{lem}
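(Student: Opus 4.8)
The plan is to show two things: that each $r_{v,w}$ is a Jacobi field (i.e., $\cL r_{v,w} = 0$), and that it is induced by a Killing field of $S^{n+1}$, namely the restriction to $\Sigma$ of the normal component of a rotation vector field on the ambient sphere. The natural candidate Killing field on $S^{n+1}$ is the one generating the rotation in the $2$-plane spanned by $v$ and $w$; at a point $x \in S^{n+1}$ it is given by $\Xi_{v,w}(x) = \langle v, x\rangle w - \langle w, x\rangle v$. The key point is that its normal component along $\Sigma$ is exactly $\langle \Xi_{v,w}, N\rangle = \langle v, x\rangle \langle w, N\rangle - \langle w, x\rangle \langle v, N\rangle = l_v f_w - l_w f_v = r_{v,w}$.

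First I would recall (this is standard and can be cited, e.g., from the literature on minimal hypersurfaces in spheres, or from the references already in the paper) that if $\Xi$ is a Killing field on the ambient manifold $M$, then the scalar function $\langle \Xi, N\rangle$ on a two-sided minimal hypersurface $\Sigma \subset M$ is a Jacobi field, i.e., $\cL \langle \Xi, N\rangle = 0$. This immediately gives that $r_{v,w}$ is a Jacobi field and that it is non-exceptional, since it arises from the Killing field $\Xi_{v,w}$. Alternatively, if one prefers a self-contained computation, one can verify $\cL r_{v,w} = 0$ directly from the formulas already derived: using $\Delta l_w = -n l_w$, $\Delta f_w = -|\sff_\Sigma|^2 f_w$, $\nabla l_w = w^\top$, $\nabla f_w = -A(w^\top)$, together with the product rule $\Delta(l_v f_w) = (\Delta l_v) f_w + 2\langle \nabla l_v, \nabla f_w\rangle + l_v (\Delta f_w)$, and noting that the cross terms $\langle w^\top, A(v^\top)\rangle$ are symmetric in $v$ and $w$ (since $A$ is self-adjoint) and hence cancel in the antisymmetric combination $r_{v,w} = l_v f_w - l_w f_v$. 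This yields $\Delta r_{v,w} = -n(l_v f_w - l_w f_v) - |\sff_\Sigma|^2(l_v f_w - l_w f_v) = -(n + |\sff_\Sigma|^2) r_{v,w}$, so $\cL r_{v,w} = \Delta r_{v,w} + |\sff_\Sigma|^2 r_{v,w} + n r_{v,w} = 0$.

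I would present the proof via the Killing field route as the main argument since it simultaneously establishes both claims (Jacobi field and non-exceptional) with minimal computation, and mention the direct computation as the verification that $r_{v,w}$ is indeed the function $\langle \Xi_{v,w}, N\rangle$ for the rotational Killing field. The one thing to be careful about is confirming that $\Xi_{v,w}(x) = \langle v, x\rangle w - \langle w, x\rangle v$ is genuinely a Killing field on $S^{n+1}$ (it is the restriction of the linear vector field on $\R^{n+2}$ associated to the skew-symmetric matrix $v w^\top - w v^\top$, which is tangent to $S^{n+1}$ since $\langle \Xi_{v,w}(x), x\rangle = \langle v,x\rangle\langle w,x\rangle - \langle w,x\rangle\langle v,x\rangle = 0$, and generates a one-parameter subgroup of $SO(n+2)$).

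I expect the main (minor) obstacle to be purely bookkeeping: making sure the normal component is extracted correctly and that the identification $\langle \Xi_{v,w}, N\rangle = r_{v,w}$ is stated cleanly, rather than any substantive difficulty — the lemma is essentially a direct consequence of the general principle that Killing fields induce Jacobi fields, combined with the explicit description of rotational Killing fields on the sphere.
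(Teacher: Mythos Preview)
Your proposal is correct. The direct computation you sketch (product rule for $\Delta(l_v f_w)$, cross terms $\langle v^\top, A(w^\top)\rangle = \sff_\Sigma(v^\top, w^\top)$ cancelling by symmetry) is exactly the argument the paper gives; the paper stops there, having verified $\cL r_{v,w}=0$.

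Your preferred route via the Killing field $\Xi_{v,w}(x) = \langle v,x\rangle w - \langle w,x\rangle v$ is a genuine (mild) improvement: the paper's proof only checks $\cL r_{v,w}=0$ and leaves the ``non-exceptional'' part implicit, whereas your identification $r_{v,w} = \langle \Xi_{v,w}, N\rangle$ establishes both that $r_{v,w}$ is a Jacobi field and that it is induced by an ambient Killing field in one stroke. The cost is that you invoke the general fact that Killing fields induce Jacobi fields (which the paper has already stated as well-known), while the paper's computation is entirely self-contained for the Jacobi equation itself. Either way, the substance is the same and no obstacles remain.
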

\begin{proof}
 We compute
 \begin{align*}
 \Delta(l_vf_w) &=  (\Delta l_v)f_w + l_v\Delta f_w + 2\langle \nabla l_v, \nabla f_w \rangle \\
 & = -nl_vf_w - |\sff_{\Sigma}|^2 l_vf_w + 2 \langle v^\top, - A(w^\top) \rangle \\
 & = -nl_vf_w - |\sff_{\Sigma}|^2 l_vf_w - 2\, \sff_{\Sigma}(v^\top,w^\top).
 \end{align*}
 Thus $\cL(l_vf_w) = - 2\, \sff_{\Sigma}(v^\top,w^\top)$. Using the symmetry of the second fundamental form, we have $\cL(r_{v,w}) = \cL(l_vf_w - l_wf_v) = - 2\, \sff_{\Sigma}(v^\top,w^\top) +  2\, \sff_{\Sigma}(w^\top,v^\top) = 0$ as desired. 
\end{proof}


Recall that every connected manifold $\Sigma$ admits a double cover $\tilde{\Sigma} \to \Sigma$ by the manifold $\tilde{\Sigma} =  \{(p, \cO_p) \mid  \text{$p \in \Sigma$ and $\cO_p$ is an orientation of $T_p\Sigma$}\}$. We call $\tilde{\Sigma}$ the orientation double cover of $\Sigma$. When $\Sigma$ is non-orientable, $\tilde{\Sigma}$ is connected, and $\tilde{\Sigma} \to \Sigma$ is a two-sheeted smooth covering map. Using the above lemmas, we can give a bound on the index of one-sided (hence non-orientable) minimal hypersurfaces in $S^{n+1}$ and its orientation double cover:

\begin{thm}\label{thm: one-sided, S^{n+1}} 
Let $\Phi:\Sigma \to S^{n+1}$ be a one-sided closed connected immersed minimal hypersurface in $S^{n+1}$ and let $\tilde{\Phi}: \tilde{\Sigma} \to S^{n+1}$ be its orientation double cover. Then $\Ind(\Sigma) = \Ind_\cO(\tilde{\Sigma}) \ge n+2$ and $\Ind(\tilde{\Sigma}) \ge 2n + 5$. 
\end{thm}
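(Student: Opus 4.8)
The plan is to push everything down to the two-sided orientation double cover $\tilde\Sigma$, where the index is controlled by the scalar Jacobi operator $\cL_{\tilde\Sigma}=\Delta+|\sff_{\tilde\Sigma}|^2+n$, and then produce explicit negative-definite subspaces of prescribed parity from the test functions of Definition~\ref{def: test fcns}. Let $\tau\colon\tilde\Sigma\to\tilde\Sigma$ be the nontrivial deck transformation; it is a free Riemannian involution, $\tilde\Phi=\Phi\circ\pi$ is a minimal immersion, and the globally defined unit normal $N$ of $\tilde\Phi$ satisfies $N\circ\tau=-N$ (otherwise the induced co-orientation of $\tilde\Sigma$ would descend to $\Sigma$, contradicting one-sidedness). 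Since $|\sff_{\tilde\Sigma}|^2$ and $\Ric_{S^{n+1}}(N,N)=n$ are $\tau$-invariant, $\cL_{\tilde\Sigma}$ is $\tau$-equivariant. Pulling a section of $N\Sigma$ back along $\pi$ and writing it as a scalar multiple of $N$ identifies $C^\infty(N\Sigma)$ with $C^\infty(\tilde\Sigma)_\cO$ (a pulled-back section is $\tau$-invariant exactly when its scalar coefficient is odd, because $N$ flips), and under this identification the normal Jacobi operator on $N\Sigma$ corresponds to $\cL_{\tilde\Sigma}$ restricted to odd functions, while $Q_\Sigma$ corresponds to $Q_{\tilde\Sigma}$ restricted to odd functions, up to the factor $\int_{\tilde\Sigma}=2\int_\Sigma$ which does not affect definiteness. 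Hence $\Ind(\Sigma)=\Ind_\cO(\tilde\Sigma)$; this identification is standard (cf.\ \cite[Section 2.1]{ambrozio2019bubbling}).

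Next I show $\Ind_\cO(\tilde\Sigma)\ge n+2$. A one-sided $\Sigma$ is non-orientable, hence not totally geodesic, and since $\tilde\Sigma$ is locally isometric to $\Sigma$ it is not totally geodesic either, i.e.\ not an equatorial hypersphere; thus Lemmas~\ref{lem: lw} and~\ref{lem: fw} apply to $\tilde\Sigma$. From $f_w\circ\tau=\langle w,N\circ\tau\rangle=-f_w$ we see $\Gamma_2=\{f_w\mid w\in\R^{n+2}\}\subset C^\infty(\tilde\Sigma)_\cO$, with $\dim\Gamma_2=n+2$ by Lemma~\ref{lem: fw}. Since $\Delta f_w=-|\sff_{\tilde\Sigma}|^2 f_w$ yields $\cL_{\tilde\Sigma}f_w=n f_w$, every element of $\Gamma_2$ is an $n$-eigenfunction of $\cL_{\tilde\Sigma}$, so $Q_{\tilde\Sigma}(u,u)=-n\int_{\tilde\Sigma}u^2<0$ for $0\neq u\in\Gamma_2$. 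Therefore $\Ind(\Sigma)=\Ind_\cO(\tilde\Sigma)\ge\dim\Gamma_2=n+2$, which is the first assertion.

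Finally I prove $\Ind(\tilde\Sigma)\ge 2n+5$. Since $\Ind(\tilde\Sigma)=\Ind_\cE(\tilde\Sigma)+\Ind_\cO(\tilde\Sigma)$ and $\Ind_\cO(\tilde\Sigma)\ge n+2$ by the previous step, it suffices to show $\Ind_\cE(\tilde\Sigma)\ge n+3$. Because $l_w\circ\tau=l_w$ and constants are $\tau$-invariant, $\Gamma_1+\langle 1\rangle\subset C^\infty(\tilde\Sigma)_\cE$; it has dimension $n+3$, since $\dim\Gamma_1=n+2$ (Lemma~\ref{lem: lw}) and $1\notin\Gamma_1$, a constant $l_w$ being impossible as $\Delta l_w=-n l_w$. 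For $u=c+l_w$, the identity $\Delta l_w=-n l_w$ also gives $\int_{\tilde\Sigma}l_w=0$, and a short computation gives $\cL_{\tilde\Sigma}u=|\sff_{\tilde\Sigma}|^2 u+nc$, whence
\[
Q_{\tilde\Sigma}(u,u)=-\int_{\tilde\Sigma}|\sff_{\tilde\Sigma}|^2 u^2-nc^2\,|\tilde\Sigma|\le 0,
\]
with equality forcing $c=0$ and $Q_{\tilde\Sigma}(l_w,l_w)=-\int_{\tilde\Sigma}|\sff_{\tilde\Sigma}|^2 l_w^2=0$, which by Lemma~\ref{lem: lw} (as $\tilde\Sigma$ is not an equator) forces $w=0$ and hence $u\equiv 0$. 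So $Q_{\tilde\Sigma}$ is negative definite on this $(n+3)$-dimensional space of even functions, giving $\Ind_\cE(\tilde\Sigma)\ge n+3$ and therefore $\Ind(\tilde\Sigma)\ge(n+3)+(n+2)=2n+5$.

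The computations in the last two paragraphs are elementary once the test functions are in hand; the delicate point is the reduction in the first paragraph — identifying normal variations of the non-orientable $\Sigma$ with \emph{odd} scalar functions on $\tilde\Sigma$ and checking that Jacobi operator and index form transport correctly. Everything turns on the sign $N\circ\tau=-N$: it is precisely what places $\Gamma_2$ (built from $f_w=\langle w,N\rangle$) in the odd part, and the extra $+1$ over the naive bound $2(n+2)$ comes from adjoining the even function $1$, which has $Q_{\tilde\Sigma}(1,1)<0$, to the even space $\Gamma_1$.
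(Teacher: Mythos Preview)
Your proof is correct and the odd-index half (using $\Gamma_2$) is identical to the paper's. For the even-index bound $\Ind_\cE(\tilde\Sigma)\ge n+3$, however, you take a different route. The paper invokes the eigenvalue comparison Lemma~\ref{lem: even and odd comparison}: the auxiliary operator $L_2=-\Delta-n$ has at least $n+3$ nonpositive even eigenvalues (namely $-n$ from the constant $1$, and $0$ from the $l_w$), and since $-|\sff_{\tilde\Sigma}|^2-n\le -n$ with $|\sff_{\tilde\Sigma}|^2\not\equiv 0$, strict monotonicity in the potential pushes these eigenvalues strictly below zero for $-\cL_{\tilde\Sigma}$. You instead work directly at the level of the quadratic form, exhibiting $\Gamma_1\oplus\langle 1\rangle$ as an explicit $(n+3)$-dimensional even subspace on which $Q_{\tilde\Sigma}$ is negative definite, via the computation $Q_{\tilde\Sigma}(c+l_w,c+l_w)=-\int|\sff|^2(c+l_w)^2-nc^2|\tilde\Sigma|$. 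Your approach is more elementary and self-contained --- it bypasses the comparison lemma entirely --- while the paper's route reuses machinery already set up for other purposes in the preliminaries. Both arguments ultimately rest on the same two ingredients ($\Delta l_w=-nl_w$ and $\tilde\Sigma$ not an equator), so the difference is one of packaging rather than substance.
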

\begin{proof}
Since $\Phi:\Sigma \to S^{n+1}$ is a one-sided closed connected immersed minimal hypersurface in $S^{n+1}$, we have that $n \ge 2$ and $\Sigma$ is non-orientable, and we can consider the connected orientation double cover $p: \tilde{\Sigma} \to \Sigma$.  The immersion $\Phi: \Sigma \to S^{n+1}$ lifts to a two-sided immersion $\tilde{\Phi}: \tilde{\Sigma} \to S^{n+1}$
that is also minimal. Since $\tilde{\Phi}$ factors through the covering map $p$, it cannot be an embedding. In particular, this implies $\tilde{\Sigma}$ cannot be the equatorial hypersphere, so $|\sff_{\tilde{\Sigma}}| \not \equiv 0$. Let $N$ denote a unit normal vector field of $\tilde{\Sigma}$ and let $s: \tilde{\Sigma} \to \tilde{\Sigma}$ be the change of sheet involution induced by the covering. Then $N$ is odd with respect to the involution $s$. Normal vector fields on $\Sigma$ are in one-to-one correspondence with normal vector fields on $\tilde{\Sigma}$ that are even, i.e., vector fields of the form $\phi N$ where $\phi$ is an \emph{odd} function with respect to $s$. Hence $\Ind(\Sigma) = \Ind_\cO(\tilde{\Sigma})$. Note that the functions $l_w$ are even with respect to $s$, while the functions $f_w$ are odd with respect to $s$. That is, $l_w \circ s = l_w$ and $f_w \circ s = -f_w$.

Now consider the operator $L_2 = -\Delta - n$. It has even eigenvalue $-n$ with eigenfunction $1$, and even eigenvalue $0$ with eigenfunctions $l_w$. Thus by Lemma~\ref{lem: lw}, $L_2$ has at least $n+3$ even eigenvalues less than or equal to $0$. Since $- n \le - |\sff_{\tilde{\Sigma}}|^2 - n$ and $|\sff_{\tilde{\Sigma}}|^2 \not \equiv 0$, by Lemma~\ref{lem: even and odd comparison}, we have that $\cL_{\tilde\Sigma}$ has at least $n+3$ even eigenvalues strictly less than $0$. In particular, this shows $\Ind_\cE(\tilde\Sigma) \ge n+3$.

By Lemma \ref{lem: fw}, $\Ind_\cO (\tilde{\Sigma}) \ge \dim \Gamma_2 = n+2$. Consequently, $\Ind(\Sigma) = \Ind_\cO(\tilde{\Sigma}) \ge n+2$ and $\Ind(\tilde{\Sigma}) \ge \Ind_\cE (\tilde{\Sigma}) + \Ind_\cO (\tilde{\Sigma}) \ge 2n+5$.
\end{proof}

In \cite{savo2010index}, Savo proved a lower bound on the index of closed orientable minimal hypersurfaces $\Sigma^n$ in $S^{n+1}$ in terms of the first Betti number. Specifically, he proved that for $\Sigma$ not totally geodesic, the number of eigenvalues of $\cL$ less than $-n$ is bounded from below by $$\frac{2}{(n+2)(n+1)}\cdot \text{(dimension of the space of harmonic 1-forms on $\Sigma$)}.$$
By Hodge theory, the dimension of the space of harmonic 1-forms on $\Sigma$ equals $b_1(\Sigma)$. 
Since $-n$ is an eigenvalue of $\cL$ with multiplicity at least $n+2$ (Lemma~\ref{lem: fw}), it follows that $\Ind(\Sigma) \ge \frac{2}{(n+2)(n+1)}b_1(\Sigma) + n +2$.

Now suppose that $\Sigma$ is non-orientable and consider its orientation double cover $\tilde{\Sigma}$ with change of sheet involution $s$. Looking through Savo's proof, we see that every step carries over if we restrict only to functions and forms on $\tilde{\Sigma}$ that can pass down to $\Sigma$; that is, we replace the eigenvalues of $\cL_{\tilde{\Sigma}}$ by odd eigenvalues and we replace the eigen 1-forms of $\Delta_1$ (the rough Laplacian on 1-forms) by the eigen 1-forms of $\Delta_1$ invariant under $s$. Then we obtain that the number of odd eigenvalues of $\cL_{\tilde{\Sigma}}$ less than $-n$ is bounded from below by $$\frac{2}{(n+2)(n+1)}\cdot  \text{(dimension of the space of harmonic 1-forms on $\tilde{\Sigma}$ that pass down to $\Sigma$)}.$$ This quantity is just $\frac{2}{(n+2)(n+1)}b_1(\Sigma)$. Since $-n$ is an odd eigenvalue of $\cL_{\tilde{\Sigma}}$ with multiplicity at least $n+2$ (Lemma~\ref{lem: fw}), it follows that $\Ind_\cO(\tilde{\Sigma}) \ge \frac{2}{(n+2)(n+1)}b_1(\Sigma) + n +2$.
Combining with Proposition~\ref{thm: one-sided, S^{n+1}}, we can extend Savo's result to the non-orientable setting as below.

\begin{thm}\label{thm: one-sided, sphere, Betti} 
Let $\Phi:\Sigma \to S^{n+1}$ be a one-sided closed connected immersed minimal hypersurface in $S^{n+1}$, and let $\tilde{\Phi}: \tilde{\Sigma} \to S^{n+1}$ be its orientation double cover. Let $b_1(\Sigma)$ be the first Betti number of $\Sigma$. Then $\Ind(\Sigma) = \Ind_\cO(\tilde{\Sigma}) \ge \frac{2}{(n+2)(n+1)}b_1(\Sigma) + n+2$ and $\Ind(\tilde{\Sigma}) \ge \frac{2}{(n+2)(n+1)}b_1(\Sigma) + 2n + 5$. 
\end{thm}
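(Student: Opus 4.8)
The plan is to run Savo's index--Betti estimate \cite{savo2010index} $\Z/2$-equivariantly on the orientation double cover $p\colon\tilde\Sigma\to\Sigma$, whose change-of-sheet transformation $s$ is a Riemannian involution for the pulled-back metric, and then to assemble the two asserted inequalities from the even/odd splitting of the index recorded in Section~2. The equality $\Ind(\Sigma)=\Ind_\cO(\tilde\Sigma)$ is exactly Theorem~\ref{thm: one-sided, S^{n+1}}, so the real content is a lower bound on $\Ind_\cO(\tilde\Sigma)$; the bound on $\Ind(\tilde\Sigma)$ then follows by adding the even part, which is already controlled.

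First I would recall the shape of Savo's proof for a closed orientable minimal hypersurface $\Sigma'\subset S^{n+1}$ that is not totally geodesic. To each harmonic $1$-form $\omega$ on $\Sigma'$ (there are $b_1(\Sigma')$ of them, by Hodge theory) and each Killing field of $S^{n+1}$ --- equivalently each element of $\Lambda^2\R^{n+2}\cong\mathfrak{so}(n+2)$, a space of dimension $\binom{n+2}{2}=\tfrac{(n+1)(n+2)}{2}$ --- one assigns a scalar test function obtained by contracting $\omega$ with the tangential part of the Killing field and with the shape operator (equivalently, with the gradients $\nabla f_a$ of the support functions of Definition~\ref{def: test fcns}); a trace and linear-algebra argument for the form $Q_{\Sigma'}+n\int(\,\cdot\,)^2$, driven by the Bochner formula for $\omega$ and the Killing equation, then produces at least $\tfrac{2}{(n+1)(n+2)}\,b_1(\Sigma')$ linearly independent directions on which this form is negative, i.e.\ that many eigenvalues of $\cL_{\Sigma'}$ strictly below $-n$. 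Every ingredient here is natural under isometries, so I would rerun the construction on $\tilde\Sigma$ while feeding in only the harmonic $1$-forms fixed by $s$ --- equivalently the pullbacks $p^*\eta$ of harmonic $1$-forms $\eta$ on $\Sigma$. These form a space of dimension $b_1(\Sigma)$: $p$ is a local isometry, so $p^*$ maps $\mathcal{H}^1(\Sigma)$ isomorphically onto the $s$-invariant subspace $\mathcal{H}^1(\tilde\Sigma)^s$, and $\dim\mathcal{H}^1(\tilde\Sigma)^s=\dim H^1(\tilde\Sigma;\R)^s=b_1(\Sigma)$ by the transfer isomorphism for the free $\Z/2$-action. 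Since the position vector and the tangential parts of Killing fields are $s$-even while the unit normal $N$ of $\tilde\Sigma$, and hence the shape operator, is $s$-odd (orientation reversal flips $N$), the test functions produced from an $s$-invariant $\omega$ all lie in $C^\infty(\tilde\Sigma)_\cO$. As the bound $\tfrac{2}{(n+1)(n+2)}\,b_1$ is a clean multiple of $\dim\mathcal{H}^1$, Savo's counting then runs with no loss inside the odd sector, giving at least $\tfrac{2}{(n+1)(n+2)}\,b_1(\Sigma)$ odd eigenvalues of $\cL_{\tilde\Sigma}$ strictly below $-n$.

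It then remains to bookkeep. By Lemma~\ref{lem: fw} the functions $f_w$, $w\in\R^{n+2}$, span an $(n+2)$-dimensional eigenspace of $\cL_{\tilde\Sigma}$ for the eigenvalue $-n$, and each $f_w$ is $s$-odd. Since eigenfunctions for distinct eigenvalues are $L^2$-orthogonal and $-n<0$, we get $\Ind_\cO(\tilde\Sigma)\ge\tfrac{2}{(n+1)(n+2)}\,b_1(\Sigma)+(n+2)$, and with $\Ind(\Sigma)=\Ind_\cO(\tilde\Sigma)$ this is the first inequality. For the second, the proof of Theorem~\ref{thm: one-sided, S^{n+1}} already gives $\Ind_\cE(\tilde\Sigma)\ge n+3$ (via the comparison of $\cL_{\tilde\Sigma}$ with $-\Delta-n$ together with Lemma~\ref{lem: even and odd comparison}), so $\Ind(\tilde\Sigma)=\Ind_\cE(\tilde\Sigma)+\Ind_\cO(\tilde\Sigma)\ge(n+3)+\bigl(\tfrac{2}{(n+1)(n+2)}b_1(\Sigma)+n+2\bigr)=\tfrac{2}{(n+1)(n+2)}b_1(\Sigma)+2n+5$.

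The main obstacle is making the second paragraph rigorous: one must check that \emph{every} step of Savo's argument survives restriction to the appropriate $s$-eigenspaces with no loss --- in particular that his explicit test functions built from an $s$-invariant harmonic $1$-form genuinely land in $C^\infty(\tilde\Sigma)_\cO$ (this is precisely where the $s$-oddness of the Gauss map enters), that the Bochner/Weitzenb\"ock and integration-by-parts identities he uses are unaffected by the restriction, and that the linear-algebra extraction of $\tfrac{2}{(n+1)(n+2)}\,b_1(\Sigma)$ independent negative directions goes through after replacing his space of test functions and his space of harmonic $1$-forms by their $s$-invariant (respectively $s$-odd) parts. The remaining points --- the identification of $s$-invariant harmonic forms with $\mathcal{H}^1(\Sigma)$, and the final arithmetic --- are routine.
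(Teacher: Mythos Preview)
Your proposal is correct and follows essentially the same approach as the paper: run Savo's argument $s$-equivariantly on the orientation double cover using the $s$-invariant harmonic $1$-forms (identified with $\mathcal{H}^1(\Sigma)$), observe that the resulting test functions are $s$-odd because the normal/shape operator is $s$-odd, count the $n+2$ odd eigenfunctions $f_w$ at eigenvalue $-n$, and add the even-index bound $\Ind_\cE(\tilde\Sigma)\ge n+3$ from Theorem~\ref{thm: one-sided, S^{n+1}} for the second inequality. The paper's ``proof'' is the informal paragraph immediately preceding the theorem and is at exactly the same level of detail as yours---both defer the verification that each step of Savo's estimate survives the restriction to the odd sector, which you correctly identify as the only nontrivial point.
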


\section{Minimal hypersurfaces in $\R P^{n+1}$}\label{sec: RPn}
Define $\tau = -\Id$ on $\R^{n+2}$. Then $\tau$ restricted to the antipodal map on $S^{n+1}$. Let $({\widehat\Sigma}^{n}, g) \to (S^{n+1}, \overline{g})$ be a closed connected orientable (hence two-sided) immersed minimal hypersurface with antipodal symmetry. 

Let $p$ denote the quotient map $p: S^n \to \R P^{n+1} = S^{n+1}/(x \sim \tau(x))$. Then $\widehat\Sigma$ passes through the quotient map to a closed connected minimal hypersurface $\Sigma$ in $\R P^{n+1}$, and $\widehat \Sigma$ is a double cover of $\Sigma$, with $\tau :\widehat \Sigma \rightarrow \widehat\Sigma$ the change of sheet involution of $\widehat \Sigma$. Let $N$ be a unit normal vector field on $\widehat \Sigma$.

If $(\Sigma^{n}, g)$ is two-sided, then $\tau:\widehat \Sigma \rightarrow \widehat\Sigma$ must satisfy $d\tau_{x}(N(x)) = N(\tau(x))$ for all $x \in \widehat\Sigma$. Since $d\tau_x = -\Id$ on $T_x(\R^{n+2})$, if we consider the normal vector field $N$ of $\widehat \Sigma$ as a map to $\R^{n+2}$ then $N(\tau(x)) = d\tau_{x}(N(x)) = -N(x)$, showing that $N$ is odd under $\tau$. Since $\Sigma$ is two-sided, normal vector fields on $\Sigma$ are in one-to-one correspondence with normal vector fields on $\tilde{\Sigma}$ that are odd with respect to $\tau$, i.e., vector fields of the form $\phi N$ where $\phi$ is an \emph{even} function with respect to $\tau$. 
The Morse index of $\Sigma$ thus equals the index of the quadratic form $Q_{\widehat\Sigma}$ over the space of functions $\varphi \in W^{1,2}(\widehat\Sigma)$ satisfying $\varphi \circ \tau = \varphi$. Thus $\Ind(\Sigma) = \Ind_\cE(\widehat\Sigma)$.

If instead $(\Sigma^{n}, g)$ is one-sided, then $\tau:\widehat \Sigma \rightarrow \widehat\Sigma$ must satisfy $d\tau_{x}(N(x)) = -N(\tau(x))$ for all $x \in \widehat\Sigma$. In this case the normal vector field $N$ of $\widehat \Sigma$, considered as a map to $\R^{n+2}$, is even under $\tau$; i.e., $N(\tau(x)) = N(x)$. 
The Morse index of $\Sigma$ equals the index of the quadratic form $Q_{\widehat\Sigma}$ over the space of functions $\varphi \in W^{1,2}(\widehat\Sigma)$ satisfying $\varphi \circ \tau = -\varphi$. Thus $\Ind(\Sigma) = \Ind_\cO(\widehat\Sigma)$.


\subsection{Lower bound on the index of one-sided minimal hypersurfaces in $\R P^{n+1}$}
When $n$ is even, $\R P^{n+1}$ is orientable. Then a connected one-sided hypersurface in $\R P^{n+1}$ is non-orientable, while a connected two-sided hypersurface in $\R P^{n+1}$ is orientable. 
When $n$ is odd, $\R P^{n+1}$ is non-orientable. Then a connected one-sided hypersurface in $\R P^{n+1}$ is orientable, while a connected two-sided hypersurface in $\R P^{n+1}$ is non-orientable, unless it contains
no loop noncontractible in $\R P^{n+1}$. For details, see \cite[Section 1]{viro1998mutual}.

Since $\R P^{n+1}$ has positive Ricci curvature, it does not have closed stable connected two-sided minimal hypersurfaces. On the other hand, Ohnita \cite[Theorem~C]{ohnita1986stable} showed the only closed stable connected one-sided minimal hypersurfaces of $\R P^{n+1}$ are the projective subspaces $\R P^{n}$. Here we show that in many cases there is a large gap on the index of one-sided minimal hypersurfaces of $\R P^{n+1}$:

\begin{thm}\label{thm: RPn, general}
Let $\Phi: \Sigma^{n} \to \R P^{n+1}$ be an unstable connected one-sided immersed minimal hypersurface of $\R P^{n+1}$. Suppose either
\begin{itemize}
\item $n$ is odd, or
\item $n$ is even and $\Phi$ lifts to an immersion of $\Sigma^{n}$ into $S^{n+1}$, or
\item $n$ is even and there exist a connected
orientable twofold covering $\widehat{\Sigma} \to \Sigma$ and an isometric minimal immersion $\widehat{\Phi}: \widehat{\Sigma} \to S^{n+1}$ locally
congruent to $\Phi$ such that $\widehat{\Sigma}$ is invariant under the antipodal symmetry $\tau: S^{n+1} \to S^{n+1}$,
\end{itemize}
Then $\Ind(\Sigma) \ge n+2$. 
\end{thm}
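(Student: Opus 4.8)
The plan is to reduce the statement to the sphere result from Section~\ref{sec: sphere} by passing to the orientation double cover inside $S^{n+1}$, and then to identify the index of $\Sigma$ with an appropriate ``odd'' index upstairs. Concretely, in each of the three cases I would produce a closed connected orientable minimal hypersurface $\widehat{\Sigma} \to S^{n+1}$ that is invariant under the antipodal map $\tau = -\Id$, together with an identification of $\Sigma$ as a quotient of $\widehat{\Sigma}$. For the first bullet ($n$ odd), a connected one-sided hypersurface of $\R P^{n+1}$ is orientable, so $\Phi$ lifts to an immersion of $\Sigma$ itself into $S^{n+1}$ that is $\tau$-invariant, and I take $\widehat{\Sigma} = \Sigma$ with $\tau$ acting freely; for the second bullet the lift is given by hypothesis; for the third bullet the data $(\widehat{\Sigma}, \widehat{\Phi}, \tau)$ is exactly what is assumed. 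In every case I then observe, exactly as in the discussion preceding this theorem, that because $\Sigma$ is one-sided the unit normal $N$ of $\widehat{\Sigma}$ (viewed as an $\R^{n+2}$-valued map) satisfies $N \circ \tau = N$, i.e. $N$ is \emph{even} under $\tau$, so normal fields on $\Sigma$ correspond to normal fields $\phi N$ on $\widehat{\Sigma}$ with $\phi$ \emph{odd} under $\tau$; hence $\Ind(\Sigma) = \Ind_\cO(\widehat{\Sigma})$ with respect to the involution $\tau$.

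Next I would invoke the test functions $f_w$ from Definition~\ref{def: test fcns}. Since $\Sigma$ is unstable it is in particular not totally geodesic, so $\widehat{\Sigma}$ is not an equatorial hypersphere (a totally geodesic $S^n \subset S^{n+1}$ is antipodally symmetric but its quotient $\R P^n$ is stable, contradicting instability), hence $|\sff_{\widehat\Sigma}|^2 \not\equiv 0$. By Lemma~\ref{lem: fw}, $\dim \Gamma_2 = n+2$, and each nonzero $f_w$ is an eigenfunction of $\cL_{\widehat\Sigma}$ with eigenvalue $-n < 0$, so $Q_{\widehat\Sigma}$ is negative definite on $\Gamma_2$. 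The key point is the parity: $f_w(x) = \langle w, N(x)\rangle$ and $N \circ \tau = N$, so $f_w \circ \tau = f_w$ — wait, that gives $f_w$ \emph{even}, not odd. So I need to be careful here: in the one-sided case it is actually $l_w$ that is odd ($l_w \circ \tau = \langle w, \tau(x)\rangle = -l_w$) while $f_w$ is even. Therefore the $(n+2)$-dimensional negative-definite space I want is $\Gamma_1 = \{l_w\}$, not $\Gamma_2$: by Lemma~\ref{lem: lw}, since $\widehat\Sigma$ is not equatorial, $Q_{\widehat\Sigma}$ is negative definite on $\Gamma_1$ and $\dim\Gamma_1 = n+2$, and every $l_w$ is odd under $\tau$. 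Hence $\Gamma_1 \subset C^\infty(\widehat\Sigma)_\cO$ is an $(n+2)$-dimensional space on which $Q_{\widehat\Sigma}$ is negative definite, giving $\Ind_\cO(\widehat\Sigma) \ge n+2$, and therefore $\Ind(\Sigma) = \Ind_\cO(\widehat\Sigma) \ge n+2$.

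The main obstacle, and the only genuinely delicate point, is the case $n$ even: there one must actually construct the antipodally symmetric orientable double cover $\widehat\Sigma \subset S^{n+1}$, because an immersed one-sided $\Sigma \subset \R P^{n+1}$ need not lift to $S^{n+1}$ at all. This is precisely why the theorem is stated with the three-way hypothesis — the second and third bullets simply postulate the existence of the lift, so under those hypotheses the construction is handed to us and there is nothing to do. The second subtlety to check is that $\widehat\Sigma$ is genuinely the orientation double cover (equivalently that $\tau$ acts without fixed points and that $\widehat\Sigma$ is connected): fixed points of $\tau$ on $S^{n+1}$ would have to be fixed points of $-\Id$, of which there are none, so $\tau$ is automatically free on $\widehat\Sigma$; connectedness of $\widehat\Sigma$ in the $n$-odd and second-bullet cases follows because $\Sigma$ is one-sided hence its (connected) orientation double cover must be the relevant cover, and in the third bullet connectedness of $\widehat\Sigma$ is assumed. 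Beyond this bookkeeping the argument is a direct transplant of Theorem~\ref{thm: one-sided, S^{n+1}}'s even/odd analysis, now with the roles of even and odd interchanged because the relevant ambient involution is the antipodal map rather than the deck transformation of an orientation cover over a sphere.
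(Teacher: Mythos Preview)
Your argument for the first and third bullets is essentially the paper's (use the $\tau$-invariant orientable double cover $\widehat\Sigma\subset S^{n+1}$, note $\Ind(\Sigma)=\Ind_\cO(\widehat\Sigma)$, and feed in the odd test functions $l_w\in\Gamma_1$), but your handling of the $n$-odd case is stated backwards. Orientability of $\Sigma$ does \emph{not} imply that $\Phi$ lifts to $S^{n+1}$; for example $\R P^n\hookrightarrow\R P^{n+1}$ with $n$ odd does not lift. In fact the paper proves the opposite: if $\Phi$ lifted, $\Sigma$ would be one-sided in $S^{n+1}$, hence non-orientable, contradicting orientability. It is precisely because $\Phi$ does \emph{not} lift that the pullback $\widehat\Sigma=\Sigma\times_{\R P^{n+1}}S^{n+1}$ is connected; it is orientable because $\Sigma$ is. So the object you want exists, but for the reason opposite to the one you give, and you cannot take $\widehat\Sigma=\Sigma$.

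There is a genuine gap in the second bullet. You try to run the same $\tau$-based argument, but no connected orientable $\tau$-invariant double cover of $\Sigma$ exists here: since $\Phi$ \emph{does} lift, the pullback $\widehat\Sigma$ is the trivial (disconnected) cover $\Sigma\sqcup\Sigma$ with $\tau$ swapping the sheets, and each sheet is the non-orientable $\Sigma$. The orientation double cover $\tilde\Sigma$ of $\Sigma$ is connected and orientable, but it sits (via the lift) inside one sheet of $p^{-1}(\Phi(\Sigma))$ and is \emph{not} $\tau$-invariant; the relevant involution on $\tilde\Sigma$ is the change-of-sheet map $s$, under which $l_w$ is even, not odd, so your $\Gamma_1$ argument does not produce odd test functions. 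The paper does not attempt to unify this case: it simply observes that the lift identifies the normal bundles, so $\Ind(\Sigma\subset\R P^{n+1})=\Ind(\Sigma\subset S^{n+1})$, and then invokes Theorem~\ref{thm: one-sided, S^{n+1}} directly --- which, as you yourself note in your last paragraph, uses the involution $s$ and the test functions $f_w\in\Gamma_2$ rather than $\tau$ and $l_w$. You need that separate step; the ``direct transplant'' does not cover bullet two.
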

\begin{proof}
\emph{Case 1:} Suppose $n$ is odd. Then since $\Sigma$ is one-sided, $\Sigma$ is orientable. There are two subcases: either (1) $\Phi$ lifts to an immersion of $\Sigma^{n}$ into $S^{n+1}$, or (2) there exist a connected twofold covering $\widehat{\Sigma} \to \Sigma$ and an isometric minimal immersion $\widehat{\Phi}: \widehat{\Sigma} \to S^n$ locally congruent to $\Phi$ such that $\widehat{\Sigma}$ is invariant under the antipodal symmetry $\tau: S^{n+1} \to S^{n+1}$. 

\emph{Subcase (1):} If $\Phi$ lifts to an immersion of $\Sigma$ into $S^{n+1}$, then $\Sigma$ is a one-sided minimal hypersurface in the sphere. However, one-sided minimal hypersurfaces in the sphere are non-orientable, contradicting the fact that $\Sigma$ is orientable. So this subcase cannot happen.

\emph{Subcase (2):} Since Subcase (1) cannot happen, there exist a connected twofold covering $\widehat{\Sigma} \to \Sigma$ and an isometric minimal immersion $\widehat{\Phi}: \widehat{\Sigma} \to S^{n+1}$ locally
congruent to $\Phi$ such that $\widehat{\Sigma}$ is invariant under the antipodal symmetry $\tau: S^{n+1} \to S^{n+1}$, $x \mapsto -x$. 
Since $\Sigma$ is orientable, $\widehat \Sigma$ is also orientable (hence two-sided in $S^{n+1}$). 
Then $\Ind(\Sigma) = \Ind_\cO(\widehat\Sigma)$.

If $\widehat\Sigma$ is an equatorial sphere, then by \cite{simons1968minimal}, $\Ind(\widehat\Sigma) = 1 = \Ind_\cE(\widehat\Sigma)$, and $\Ind_\cO(\widehat\Sigma) = 0$. This shows $\Ind(\Sigma) = \Ind_\cO(\widehat\Sigma) = 0$. This is impossible since we assume $\Sigma$ to be unstable.

Thus $\widehat\Sigma$ is a non-equatorial minimal hypersurface in $S^{n+1}$. Notice that the functions $l_w \in \Gamma_1$ as defined in Definition \ref{def: test fcns} are odd with respect to the antipodal map $\tau$. By Lemma \ref{lem: lw}, $Q_{\widehat\Sigma}$ is negative definite on $\Gamma_1$, so $\Ind_\cO(\widehat\Sigma) \ge \dim(\Gamma_1) = n+2$.

Thus $\Ind(\Sigma) = \Ind_\cO(\widehat\Sigma) \ge n+2$ in this subcase.

\emph{Case 2:} Suppose $n$ is even and $\Phi$ lifts to an immersion of $\Sigma^{n}$ into $S^{n+1}$. Then the normal bundles of $\Sigma^{n}$ in the two ambient spaces $\R P^{n+1}$ and $S^{n+1}$ are isomorphic, so the index of $\Sigma$ in $\R P^{n+1}$ equals the index of $\Sigma$ in $S^{n+1}$. By Theorem~\ref{thm: one-sided, S^{n+1}}, $\Ind(\Sigma) \ge n+2$.

\emph{Case 3:} Suppose $n$ is even and there exist a connected orientable twofold covering $\widehat{\Sigma} \to \Sigma$ and an isometric minimal immersion $\widehat{\Phi}: \widehat{\Sigma} \to S^{n+1}$ locally
congruent to $\Phi$ such that $\widehat{\Sigma}$ is invariant under the antipodal symmetry $\tau$. In this case the reasoning is exactly the same as in Case 1 Subcase (2), and we have $\Ind(\Sigma) \ge n+2$.

\end{proof}

The only possibility not included in the previous theorem is when $n$ is even and the twofold covering $\widehat{\Sigma} \to \Sigma$ invariant under the antipodal symmetry is non-orientable. Note that since embedded hypersurfaces in $S^{n+1}$ (or any simply-connected manifold) are two-sided (see, e.g., \cite{samelson1969orientability}), if we assume $\Sigma$ in $\R P^{n+1}$ to be embedded, then this case actually cannot happen. Therefore we have

\begin{cor}\label{thm: RPn, embeded}
Let $\Phi: \Sigma^{n} \to \R P^{n+1}$ be an unstable connected one-sided embedded minimal hypersurface in $\R P^{n+1}$. Then $\Ind(\Sigma) \ge n+2$.
\end{cor}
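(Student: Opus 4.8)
The plan is to deduce the corollary directly from Theorem~\ref{thm: RPn, general}: I will show that an embedded one-sided minimal hypersurface $\Phi\colon\Sigma^{n}\to\R P^{n+1}$ always satisfies one of the three bulleted hypotheses there. If $n$ is odd there is nothing to check, so assume $n$ is even. Let $p\colon S^{n+1}\to\R P^{n+1}$ be the canonical double cover and put $\widehat{\Sigma}:=p^{-1}(\Phi(\Sigma))\subset S^{n+1}$. Because $p$ is a local isometry and $\Phi$ is an embedding, $\widehat{\Sigma}$ is a closed embedded minimal hypersurface of $S^{n+1}$; the restriction $p|_{\widehat{\Sigma}}\colon\widehat{\Sigma}\to\Sigma$ is a twofold covering; $\widehat{\Sigma}$ is invariant under the antipodal map $\tau$, which acts on $\widehat{\Sigma}$ as the deck transformation of this cover; and the inclusion $\widehat{\Phi}\colon\widehat{\Sigma}\hookrightarrow S^{n+1}$ is an isometric minimal immersion locally congruent to $\Phi$.

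Next I would distinguish two cases according to whether $\widehat{\Sigma}$ is connected. If $\widehat{\Sigma}$ is disconnected, then it has two components, each carried diffeomorphically onto $\Sigma$ by $p$, so choosing one component exhibits a lift of $\Phi$ to an (embedded) minimal immersion of $\Sigma$ into $S^{n+1}$; this is the second bullet of Theorem~\ref{thm: RPn, general}, which then gives $\Ind(\Sigma)\ge n+2$. If instead $\widehat{\Sigma}$ is connected, then $p|_{\widehat{\Sigma}}$ is a connected twofold covering of $\Sigma$, and the key observation is that $\widehat{\Sigma}$ is orientable: being an embedded hypersurface of the simply-connected manifold $S^{n+1}$ it is two-sided (see \cite{samelson1969orientability}), and two-sidedness in the orientable ambient space $S^{n+1}$ is equivalent to orientability. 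Hence $\widehat{\Sigma}$ is a connected orientable twofold cover of $\Sigma$, invariant under $\tau$ and carrying a minimal immersion locally congruent to $\Phi$ --- precisely the third bullet of Theorem~\ref{thm: RPn, general} --- and again $\Ind(\Sigma)\ge n+2$.

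Since the two cases are exhaustive, this proves the corollary. I do not anticipate a genuine obstacle: all of the analytic content (the test functions $l_w$, the eigenvalue comparison of Lemma~\ref{lem: even and odd comparison}, and the exclusion of the equatorial case using instability) is already contained in Theorem~\ref{thm: RPn, general}, so the remaining work is purely topological bookkeeping --- checking that $p^{-1}(\Phi(\Sigma))$ is an embedded submanifold with $\tau$ as its deck transformation, and that embeddedness in $S^{n+1}$ upgrades \emph{two-sided} to \emph{orientable}. The one case the theorem leaves open, namely $n$ even with a non-orientable antipodally invariant double cover, simply cannot arise when $\Sigma$ is embedded, which is exactly the point of assuming embeddedness.
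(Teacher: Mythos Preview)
Your argument is correct and follows essentially the same route as the paper: the key point in both is that when $\Sigma$ is embedded, the antipodally invariant lift $\widehat{\Sigma}=p^{-1}(\Phi(\Sigma))\subset S^{n+1}$ is an embedded hypersurface of a simply-connected manifold, hence two-sided and therefore orientable, which places $\Sigma$ under one of the hypotheses of Theorem~\ref{thm: RPn, general}. Your write-up is somewhat more explicit in separating the disconnected and connected cases for $\widehat{\Sigma}$ (the former is in fact vacuous here, since an embedded lift would force $\Sigma$ to be two-sided in $\R P^{n+1}$), but the substance is identical to the paper's one-paragraph deduction.
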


\begin{rem}
In \cite{ambrozio2018comparing}, Ambrozio, Carlotto, and Sharp proved for a closed embedded minimal hypersurface $\Sigma \subset \R P^{n+1}$,  $\Ind(\Sigma) \ge \frac{2}{(n+1)(n+2)}b_1(\Sigma)$. They remarked that it is not possible to obtain an extra $n+2$ in the lower bound (see \cite[Remark 5.2]{ambrozio2018comparing}), unlike the case when the ambient space is a sphere (see \cite{savo2010index} and also Theorem \ref{thm: one-sided, sphere, Betti}). 
Indeed, as mentioned earlier, the projective subspace $\R P^{n}$ is one-sided and has index 0 in $\R P^{n+1}$ \cite{ohnita1986stable}, and
the Clifford hypersurfaces are two-sided and have index $1$ in $\R P^{n+1}$ \cite{do2000compact}.
Even when the hypersurface is unstable and one-sided, we have been unable to incorporate the lower bounds  $\frac{2}{(n+1)(n+2)}b_1(\Sigma)$ and $n+2$ together as in the case of Theorem~\ref{thm: one-sided, sphere, Betti}. This is because the functions $l_w$ only produce test functions but not eigenfunctions for the Jacobi operator. 
\end{rem}

\subsection{Index of cubic isoparametric minimal hypersurfaces}\label{subsec: isoparametric}
Next we consider the isoparametric minimal hypersurfaces in $S^{n+1}$. A hypersurface $\Sigma^{n}$ in $S^{n+1}$ is called \emph{isoparametric} if its principal curvature functions, 
$$\kappa_1(x) \le \kappa_2(x) \le \dots \le \kappa_{n}(x),$$
are constants. In particular the norm of the second fundamental form of $\Sigma^{n}$ is constant. Let $g$ be the number of distinct principal curvatures of $\Sigma^{n}$. Isoparametric hypersurfaces were first studied by \'Elie Cartan \cite{cartan1938familles,cartan1939familles,cartan1939quelques} in the late 1930's. The interest in isoparametric hypersurfaces was revived in the 1970s, and in 1973, M{\"u}nzner \cite{munzner1980isoparametrische} established a breakthrough result. He showed that the number of distinct principal curvature $g$ could only be 1, 2, 3, 4, or 6, and there exists a homogeneous polynomial $F$, later called the Cartan--M{\"u}nzner
polynomial, of degree $g$ over $\R^{n+2}$ satisfying 
\begin{gather}
|DF(x)|^2 = g^2|x|^{2g-2},  \\ 
\Delta_{\R^{n+2}} F = c|x|^{g-2} \, \text{ for some constant $c$} \label{eqn: Laplacian of F}  
\end{gather}
such that $\Sigma = F^{-1}(a) \cap S^{n+1}$ for some $a \in (-1,1)$. Conversely, each Cartan--M{\"u}nzner polynomial $F$ gives a family of isoparametric hypersurfaces; for any $a \in (-1,1)$, the level set $F^{-1}(a) \cap S^{n+1}$ is smooth and isoparametric. The mean curvature of the hypersurface $F^{-1}(a) \cap S^{n+1}$ varies monotonically between $-\infty$ and $+\infty$ as $a$ varies between $-1$ and $1$. In particular, there is a unique $t \in (-1,1)$ such that the hypersurface $\Sigma = F^{-1}(t) \cap S^{n+1}$ has zero mean curvature, i.e., it is minimal. Such hypersurfaces are called \emph{isoparametric minimal hypersurfaces}. For odd $g$, the isoparametric minimal hypersurface in the family is given by $t=0$. We refer the reader to the book \cite{cecil2015geometry} or the surveys \cite{thorbergsson2000survey, chi2020isoparametric} for more details and references on isoparametric hypersurfaces. 

Let $\Sigma^{n}=F^{-1}(t) \cap S^{n+1}$ be an isoparametric minimal hypersurface with $g$ distinct principal curvatures. Since the Cartan--M{\"u}nzner
polynomial $F$ is homogeneous and $t=0$ for odd $g$, we see that $\Sigma$ is invariant under the antipodal map $\tau$. On $\Sigma$, one can take a unit normal vector field as $N = \frac{1}{g}DF$, where $DF$ is the gradient of $F$ as a function on $\R^{n+2}$. When $g = 2,4,6$, we see that $N$ is odd under $\tau$ as a function to $\R^{n+2}$, while when $g=1,3$,
$N$ is even under $\tau$. Thus when $g=2,4,6$, $\Sigma$ quotients through $\tau$ to a two-sided hypersurface $\Sigma$ of $\R P^{n+1}$ and when $g=1,3$, $\Sigma$ quotients through $\tau$ to a one-sided hypersurface $\Sigma$ of $\R P^{n+1}$. 

\begin{lem}{\cite[Corollary 1]{peng1983minimal}}\label{lem: sff}
Let $\Sigma^{n}$ be an isoparametric minimal hypersurface with $g$ distinct principal curvatures. Then the squared norm of the second fundamental form $|\sff|^2$ equals $(g-1)n$.
\end{lem}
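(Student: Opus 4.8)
The plan is to sidestep Münzner's classification of the distinct principal curvatures and instead extract $|\sff|^2 = (g-1)n$ directly from the two defining properties of the Cartan--Münzner polynomial recorded above, $|DF|^2 = g^2|x|^{2g-2}$ and $\Delta_{\R^{n+2}}F = c|x|^{g-2}$, by restricting $F$ to $S^{n+1}$ and applying the Bochner formula. Throughout, let $\nabla$, $\Delta$, and $\nabla^2$ denote the gradient, Laplacian, and Hessian on $S^{n+1}$, and set $u := F|_{S^{n+1}}$, so that the minimal hypersurface is the level set $\Sigma = \{u = t\}$ for the unique minimal level $t \in (-1,1)$. Since $F$ is homogeneous of degree $g$, substituting the polar expression $x = r\omega$ into the two identities above --- using $\Delta_{\R^{n+2}} = \partial_r^2 + \tfrac{n+1}{r}\partial_r + \tfrac{1}{r^2}\Delta$ and separating radial from spherical parts --- turns them into
\[
|\nabla u|^2 = g^2(1-u^2), \qquad \Delta u = c - g(g+n)\,u,
\]
with the \emph{same} constant $c$; in particular $u$ is transnormal, with $|\nabla u|^2$ and $\Delta u$ explicit functions of $u$.

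Next I would express the geometry of $\Sigma = \{u=t\}$ through $\nabla^2 u$. With unit normal $N = \nabla u/|\nabla u|$ along $\Sigma$, the shape operator satisfies $\sff_\Sigma(X,Y) = -|\nabla u|^{-1}(\nabla^2 u)(X,Y)$ for $X,Y$ tangent to $\Sigma$ (in the sign convention of the excerpt), and differentiating $|\nabla u|^2 = g^2(1-u^2)$ yields $(\nabla^2 u)(X,N) = 0$ for such $X$ together with $(\nabla^2 u)(N,N) = -g^2 u$. Consequently, on $\Sigma = \{u=t\}$,
\[
|\sff_\Sigma|^2 = \frac{|\nabla^2 u|^2 - g^4 t^2}{g^2(1-t^2)}, \qquad nH = -\frac{\Delta u - (\nabla^2 u)(N,N)}{|\nabla u|}\bigg|_{u=t} = -\frac{c - gnt}{g\sqrt{1-t^2}},
\]
so minimality ($H = 0$) pins the level down to $t = c/(gn)$.

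Finally I would evaluate $|\nabla^2 u|^2$ using the Bochner formula
\[
\tfrac12\,\Delta|\nabla u|^2 = |\nabla^2 u|^2 + \langle \nabla u,\, \nabla(\Delta u)\rangle + \Ric_{S^{n+1}}(\nabla u, \nabla u),
\]
with $\Ric_{S^{n+1}} = n\,g_{S^{n+1}}$ and the two identities from the first step substituted in. Every term is then a polynomial in $u$, and one reads off $|\nabla^2 u|^2 = g^2(g^2 + n)\,u^2 - g^2 c\,u + g^2 n(g-1)$. Plugging this and $t = c/(gn)$ into the formula for $|\sff_\Sigma|^2$, the numerator factors as $n(g-1)(1-t^2)$ --- with $c$ disappearing in the process --- and $1-t^2$ cancels with the denominator, leaving $|\sff_\Sigma|^2 = (g-1)n$.

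I do not expect a genuine obstacle here; the effort lies entirely in bookkeeping --- keeping the sign conventions consistent across the radial/spherical splitting of $\Delta_{\R^{n+2}}$, the Bochner identity, and the passage between $\nabla^2 u$ and the shape operator --- and in the observation that the precise value of $c$ (equivalently, the difference of the two multiplicities in Münzner's theory) never needs to be computed. As a cross-check one could instead argue from Münzner's description of the distinct principal curvatures as $\cot\!\big(\theta_1 + \tfrac{(k-1)\pi}{g}\big)$, $k=1,\dots,g$, with multiplicities satisfying $m_k = m_{k+2}$: the summation identity $\sum_{k=1}^{g}\cot\!\big(\phi + \tfrac{(k-1)\pi}{g}\big) = g\cot(g\phi)$ together with minimality locates $\theta_1$, after which $\sum_k m_k\cot^2\theta_k$ is a finite trigonometric sum evaluable directly (e.g. via Newton's identities applied to $\Re\big((x+i)^g\big) = x^g - \binom{g}{2}x^{g-2} + \cdots$ when $g$ is odd, so that all multiplicities coincide); but that route must handle odd and even $g$ separately, whereas the Bochner computation is uniform in $g$.
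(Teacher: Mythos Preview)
Your argument is correct: the polar-coordinate reduction, the Bochner computation of $|\nabla^2 u|^2$, and the final cancellation all check out line by line. Note, however, that the paper does not actually prove this lemma --- it is quoted verbatim from \cite[Corollary~1]{peng1983minimal} and used as a black box. So there is no ``paper's own proof'' to compare against; you have supplied a self-contained derivation where the paper relies on an external reference. Your Bochner-based route is attractive precisely because it is uniform in $g$ and never requires knowing the explicit values of the principal curvatures or the constant $c$, whereas the classical approach (your proposed cross-check via $\cot(\theta_1 + (k-1)\pi/g)$) needs Münzner's structure theorem and some case analysis. Either is fine for the purpose at hand, but since the paper only \emph{uses} the result, a citation suffices.
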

 
Below we focus on cubic isoparametric minimal hypersurfaces $\Sigma$, so $g = 3$. In this case, Cartan \cite{cartan1939familles,cartan1939quelques} showed that the three principal curvatures must have equal multiplicity $m = 1, 2, 4,$ or $8$. Up to rotations, there are exactly four such hypersurfaces; one each of dimension $n = 3, 6, 12$, and $24$. These four hypersurfaces are homogeneous, and the full isometry group is $G= SO(3)$, $SU(3)$, $Sp(3)$, and $F_4$, respectively. The action of $G$ on $\Sigma$ is the restriction of a \emph{linear} action $G \to GL(\R^{n+2})$. Further, $\Sigma = F^{-1}(0) \cap S^{n+1}$ is the zero level set of $F|_{S^{n+1}}$ and $c=0$ in equation \eqref{eqn: Laplacian of F} for the Cartan-M{\"u}nzner polynomial $F: \R^{n+2} \to \R$.

\begin{lem}\label{lem: commute}
Let $\Sigma^{n} \subset S^{n+1}$ be a cubic isoparametric minimal hypersurface and let $G$ be its full isometry group. Let $\tau$ be the antipodal map. Let $\gamma \in G$ and consider the function $\gamma: G \to G$. Then $\gamma$ commutes with $\tau$. 

Thus if $f \in C^\infty(M)$ is even (respectively, odd) with respect to $\tau$, then $\gamma^\#f$ is also even (respectively, odd) with respect to $\tau$, where $\gamma^\#f(x) = f(\gamma^{-1}(x))$. 
\end{lem}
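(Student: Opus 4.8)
The plan is to prove the two assertions in sequence: first that $\gamma$, viewed as a map $G \to G$ given by left translation (or more precisely the map on $\Sigma$, or on $\R^{n+2}$, induced by the element $\gamma$), commutes with the antipodal map $\tau$, and then deduce the consequence for even/odd functions formally. The first part rests on the observation spelled out just before the statement: the $G$-action on $\Sigma$ is the restriction of a \emph{linear} action $G \to \GL(\R^{n+2})$. So for each $\gamma \in G$ we may regard $\gamma$ as a linear map on $\R^{n+2}$, and $\tau = -\Id$ on $\R^{n+2}$. First I would note that any linear map commutes with $-\Id$, since $\gamma(-x) = -\gamma(x)$ by linearity; restricting to $S^{n+1}$ and then to $\Sigma$ (which is $\gamma$-invariant since $\gamma \in G$ is an isometry of $\Sigma$, and $\tau$-invariant since $\Sigma = F^{-1}(0) \cap S^{n+1}$ with $F$ homogeneous of odd degree $3$), we get $\gamma \circ \tau = \tau \circ \gamma$ as maps $\Sigma \to \Sigma$. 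This is the entire content of the first claim, and it is essentially immediate once one unwinds that the action is linear.

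For the second part, suppose $f \in C^\infty(\Sigma)$ is even with respect to $\tau$, i.e.\ $f \circ \tau = f$. Then for the pullback $\gamma^\# f = f \circ \gamma^{-1}$ I would compute
\[
(\gamma^\# f) \circ \tau = f \circ \gamma^{-1} \circ \tau = f \circ \tau \circ \gamma^{-1} = f \circ \gamma^{-1} = \gamma^\# f,
\]
using in the middle step that $\gamma^{-1}$ also commutes with $\tau$ (being itself an element of $G$, hence a linear map, so the first part applies to it as well). Hence $\gamma^\# f$ is even. The odd case is identical with a sign: if $f \circ \tau = -f$, then $(\gamma^\# f)\circ \tau = f \circ \tau \circ \gamma^{-1} = -f \circ \gamma^{-1} = -\gamma^\# f$.

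There is no real obstacle here; the lemma is a bookkeeping statement whose only subtlety is making sure one is allowed to treat $\gamma$ as a linear transformation of the ambient $\R^{n+2}$, which is precisely what was recorded in the paragraph preceding the lemma (the four cubic examples are homogeneous with full isometry group $SO(3)$, $SU(3)$, $Sp(3)$, $F_4$ acting by the restriction of a linear representation on $\R^{n+2}$). If anything needs care, it is only the remark that $\gamma$ genuinely preserves $\Sigma$ so that $\gamma^\# f$ is again a function on $\Sigma$ — but this holds because $\gamma \in G = \Isom(\Sigma)$ by definition. I would therefore present the proof in two short steps: (i) linearity of the $G$-action forces $\gamma \circ \tau = \tau \circ \gamma$; (ii) a one-line pullback computation transfers this to parity of $\gamma^\# f$.
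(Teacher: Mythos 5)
Your proposal is correct and follows the same route as the paper: the key point in both is that the $G$-action extends to a linear action on $\R^{n+2}$, so $\gamma$ commutes with $\tau=-\Id$ automatically, and the parity statement then follows by the one-line pullback computation. The paper's proof is just the first of your two steps (it leaves the pullback computation implicit), so no discrepancy.
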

\begin{proof}
Since the action of $G$ on $\Sigma$ is the restriction of a linear action $G \to GL(\R^{n+2})$, we can view $\gamma$ as an element of $GL(\R^{n+2})$. Then it is clear that $\gamma$ commutes with $\tau = -\Id_{n+2}$ as elements of $GL(\R^{n+2})$.
\end{proof}
Let $\Sigma^{n}\subset S^{n+1}$ be a cubic isoparametric minimal hypersurface. By Lemma~\ref{lem: sff}, on $\Sigma$, the squared norm of the second fundamental form $|\sff|^2$ is constant and equals $2n$. The Jacobi operator on $\Sigma^{n}$ is thus $\cL = \Delta + 3n$, and study of the Jacobi operator becomes study of the Laplacian. In \cite{solomon1990harmonic, solomon1990harmonic2}, Bruce Solomon explicitly computed the spectrum of the Laplacian of the four cubic isoparametric minimal hypersurfaces. The functions $1$, $f_w$, $l_w$, $r_{v,w}$ (as defined in Definition~\ref{def: test fcns}) are eigenfunctions corresponding to certain low eigenvalues of the Laplacian, and Solomon showed that all the eigenfunctions can be built up from these functions. Using his result, we can compute the index of the quotients of the four cubic isoparametric minimal hypersurfaces in $\R P^{n+1}$.

\begin{thm}
Let $n = 3,6,12,24$ and let ${\widehat\Sigma}^{n} \subset S^{n+1}$ be a cubic isoparametric minimal hypersurface. Then the quotient $\Sigma = {\widehat\Sigma}^{n}/\{x \sim - x\}$ is a one-sided minimal hypersurface in $\R P^n$ of index $n+2$. 
\end{thm}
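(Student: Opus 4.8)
The plan is to reduce the index computation to the even/odd decomposition set up in Section~\ref{sec: RPn}, together with Solomon's explicit description of the Laplace spectrum of the cubic isoparametric minimal hypersurfaces. I would first record the structural facts. Since $g=3$, the hypersurface $\widehat\Sigma = F^{-1}(0)\cap S^{n+1}$ is an isoparametric level set, hence embedded, it is invariant under $\tau=-\Id$, and the unit normal $N=\tfrac13 DF$ is \emph{even} under $\tau$; so, by the discussion in Section~\ref{sec: RPn}, the quotient $\Sigma$ is a one-sided embedded minimal hypersurface of $\R P^{n+1}$ and $\Ind(\Sigma)=\Ind_\cO(\widehat\Sigma)$. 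By Lemma~\ref{lem: sff}, $|\sff_{\widehat\Sigma}|^2\equiv 2n$, so the Jacobi operator is $\cL_{\widehat\Sigma}=\Delta+3n$; listing the eigenvalues of $-\Delta$ on $\widehat\Sigma$ as $0=\mu_0<\mu_1\le\mu_2\le\cdots$ and choosing an eigenbasis adapted to the $\tau$-splitting, the second variation formula shows that $\Ind_\cO(\widehat\Sigma)$ equals the dimension of the span of those odd eigenfunctions of $-\Delta$ whose eigenvalue is strictly less than $3n$. The theorem is therefore a statement about the bottom of the spectrum of $-\Delta$ and the $\tau$-parities of the corresponding eigenspaces.

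For the lower bound, the functions $l_w$ ($w\in\R^{n+2}$) satisfy $\Delta l_w=-n\,l_w$, are odd under $\tau$, and span $\Gamma_1$ with $\dim\Gamma_1=n+2$ (Lemmas~\ref{lem: lw} and~\ref{lem: fw}, using that $\widehat\Sigma$ is not an equator); since $n<3n$, this gives $\Ind_\cO(\widehat\Sigma)\ge n+2$, which is also a special case of Corollary~\ref{thm: RPn, embeded}. For the reverse inequality I would invoke Solomon's work \cite{solomon1990harmonic, solomon1990harmonic2}. His determination of the spectrum of $-\Delta$ on each of the four hypersurfaces $\widehat\Sigma^n$ ($n=3,6,12,24$) shows that the eigenvalues below $3n$ are precisely $\mu=0$, with eigenspace the constants, $\mu=n$, with eigenspace exactly $\Gamma_1$, and $\mu=2n$, with eigenspace exactly $\Gamma_2=\{f_w:w\in\R^{n+2}\}$ (the $r_{v,w}$ appearing at the next eigenvalue, $3n$). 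Because $N$ is even under $\tau$ for $g=3$, the constants and every $f_w$ are \emph{even} under $\tau$, whereas every $l_w$ is \emph{odd}; hence the odd eigenfunctions of $-\Delta$ with eigenvalue $<3n$ span exactly $\Gamma_1$, giving $\Ind_\cO(\widehat\Sigma)\le n+2$. Combining the two bounds yields $\Ind(\Sigma)=\Ind_\cO(\widehat\Sigma)=n+2$.

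The main obstacle is extracting from Solomon's representation-theoretic calculations exactly the spectral facts used above: one must verify, case by case for the isometry groups $G=SO(3),SU(3),Sp(3),F_4$ acting linearly on $C^\infty(\widehat\Sigma)$, that $-\Delta$ has no eigenvalue in $(0,n)\cup(n,2n)\cup(2n,3n)$ and that its $n$- and $2n$-eigenspaces are \emph{exactly} $\Gamma_1$ and $\Gamma_2$ — in particular that no odd function occurs among the eigenfunctions of eigenvalue $2n$. Once these are in hand, the remaining parity bookkeeping is immediate, since $l_w$ and $f_w$ are given by explicit formulas whose behavior under $\tau=-\Id$ is manifest.
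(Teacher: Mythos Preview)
Your reduction $\Ind(\Sigma)=\Ind_\cO(\widehat\Sigma)$, the identification $\cL_{\widehat\Sigma}=\Delta+3n$, the lower bound via $\Gamma_1$, and the parity bookkeeping for the eigenspaces at $0$, $n$, $2n$ are all correct and match the paper. However, your reading of Solomon's spectrum is off in one crucial point: the eigenvalues of $-\Delta_{\widehat\Sigma}$ strictly below $3n$ are \emph{not} just $0,\,n,\,2n$, but $0,\,n,\,2n,\,2n+2$ (see \cite[(2.20)]{solomon1990harmonic}). Since $2n+2<3n$ for every $n\ge 3$, the eigenspace at $2n+2$ contributes to the index of $\widehat\Sigma$, and your claim that there is no eigenvalue in $(2n,3n)$ is false. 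Thus the upper bound $\Ind_\cO(\widehat\Sigma)\le n+2$ does not follow from what you wrote.

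What is missing is precisely the step the paper supplies: one must show that the $(2n+2)$-eigenspace consists entirely of \emph{even} functions. Solomon exhibits a generator of this eigenspace of the form $\zeta^2-\tfrac{1}{n+3}\nu^2$ with $\zeta\in\Gamma_1\otimes_\R\C$ (odd) and $\nu\in\Gamma_2\otimes_\R\C$ (even), so this function is even under $\tau$; the full eigenspace is then $\Span_\C\{\gamma^\#(\zeta^2-\tfrac{1}{n+3}\nu^2):\gamma\in G\}$, and since the isometry group $G$ acts linearly on $\R^{n+2}$ it commutes with $\tau=-\Id$, so every function in this span remains even (Lemma~\ref{lem: commute}). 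Once this is established, the only odd eigenvalue below $3n$ is $n$ with multiplicity $n+2$, and your argument goes through.
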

\begin{proof}

By earlier discussions in this section, $\Sigma$ is one-sided in $\R P^n$ and $\Ind(\Sigma) = \Ind_\cO(\widehat\Sigma)$. So we just need to show $\Ind_\cO(\widehat\Sigma) = n+2$.
Solomon showed that the only eigenvalues of $\Delta_{\widehat\Sigma}$ less than $3n$ (i.e., the index eigenvalues of $\cL_{\widehat\Sigma}$) are precisely
$0$, $n$, $2n$, $2n+2$ \cite[(2.20)]{solomon1990harmonic}. Of these eigenvalues, $0$ has multiplicity 1 with corresponding eigenfunctions being the constant functions, $n$ has multiplicity $n+2$ with eigenspace $\Gamma_1$, and $2n$ has multiplicity $n+2$ with eigenspace $\Gamma_2$. The functions $l_w \in \Gamma_1$ are odd with respect to $\tau$. Since $N$ is even under the antipodal map $\tau$, the functions $f_w = \langle w, N \rangle \in \Gamma_2$ are also even with respect to $\tau$. Thus $\Ind_\cO(\widehat\Sigma) \ge \dim \Gamma_1 = n+2$, and it only remains to show all the eigenfunctions with eigenvalue $2n+2$ are even.

Solomon \cite[(2.20)]{solomon1990harmonic} showed that there exist complex-valued functions $\zeta \in \Gamma_1 \tensor_\R \C$ and $\nu \in \Gamma_2 \tensor_\R \C$ such that 
$$\Delta \zeta^2 = -(2n+2)\zeta^2 - 2\nu^2,$$
$$\Delta \nu^2 = -(8+4n)\nu^2,$$
so $\Delta(\zeta^2 - \frac{1}{n+3}\nu^2) = -(2n+2)(\zeta^2 - \frac{1}{n+3}\nu^2)$, showing $\zeta^2 - \frac{1}{n+3}\nu^2$ is a complex-valued eigenfunction of the Laplacian with eigenvalue $2n+2$. See Section 2 of \cite{solomon1990harmonic} for definition of $\zeta$ and $\nu$. Further, let $G$ denote the full isometry group of ${\widehat\Sigma}^{n}$. 
The complex eigenspace corresponding to eigenvalue $2n+2$ is precisely
$$\Span_\C\{\gamma^\#(\zeta^2 - \frac{1}{n+3}\nu^2) \mid \gamma \in G \}.$$
It is easy to see that the function $\zeta^2 - \frac{1}{n+3}\nu^2$ is even. Then by Lemma~\ref{lem: commute}, all functions in $\Span_\C\{\gamma^\#(\zeta^2 - \frac{1}{n+3}\nu^2) \mid \gamma \in G \}$ are even. 

Therefore the only odd eigenvalues of $\Delta_{\widehat\Sigma}$ less than $3n$ are $n$ with multiplicity $n+2$, so $\Ind(\Sigma) = \Ind_\cO(\widehat\Sigma) = n+2$ as desired. 

\end{proof}

From the above theorem we deduce
\begin{cor}\label{cor: isoparametric}
The bound $\Ind(\Sigma) \ge n+2$ in Theorem~\ref{thm: RPn, general} and therefore in Corollary~\ref{thm: RPn, embeded} is achieved by the cubic isoparametric minimal hypersurfaces.
\end{cor}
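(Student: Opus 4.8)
The plan is to observe that Corollary~\ref{cor: isoparametric} follows immediately by combining the preceding theorem (which computes the index of the quotients of the four cubic isoparametric minimal hypersurfaces exactly) with a verification that these quotients satisfy the hypotheses of Corollary~\ref{thm: RPn, embeded} (equivalently, of the relevant cases of Theorem~\ref{thm: RPn, general}).

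First I would check that, for each $n \in \{3,6,12,24\}$, the quotient $\Sigma = {\widehat\Sigma}^{n}/\{x \sim -x\}$ is a connected, one-sided, embedded minimal hypersurface of $\R P^{n+1}$. Connectedness holds because each cubic isoparametric minimal hypersurface ${\widehat\Sigma}^{n}$ is homogeneous under one of the connected groups $SO(3)$, $SU(3)$, $Sp(3)$, $F_4$, hence connected, and so is its quotient. One-sidedness was already recorded in Section~\ref{subsec: isoparametric}: when $g=3$ the unit normal $N = \tfrac{1}{3}DF$ is even under $\tau$, so $\Sigma$ is one-sided in $\R P^{n+1}$. Embeddedness follows since ${\widehat\Sigma}^{n}$ is embedded in the simply connected manifold $S^{n+1}$, is invariant under the free involution $\tau$, and therefore descends to an embedded hypersurface of $\R P^{n+1}$ double covered by ${\widehat\Sigma}^{n}$.

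Next I would address the instability hypothesis, being careful to avoid circularity. This can be seen directly: the functions $l_w \in \Gamma_1$ restrict to functions on ${\widehat\Sigma}^{n}$ that are odd with respect to $\tau$, and by Lemma~\ref{lem: lw} (since ${\widehat\Sigma}^{n}$ is not an equator) $Q_{\widehat\Sigma}$ is negative definite on $\Gamma_1$, so $\Ind(\Sigma) = \Ind_\cO(\widehat\Sigma) \ge \dim \Gamma_1 = n+2 > 0$; alternatively, one may simply cite the preceding theorem, which already gives $\Ind(\Sigma) = n+2 \ge 5$. In either case $\Sigma$ is unstable, so Corollary~\ref{thm: RPn, embeded} applies and yields $\Ind(\Sigma) \ge n+2$. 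Combining this inequality with the exact value $\Ind(\Sigma) = n+2$ from the preceding theorem shows the bound in Theorem~\ref{thm: RPn, general} and Corollary~\ref{thm: RPn, embeded} is attained.

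I do not expect any genuine obstacle here, since the corollary is essentially a repackaging of the previous theorem; the only point requiring a moment's care is ensuring that the "unstable" assumption of Corollary~\ref{thm: RPn, embeded} is verified without invoking the very index computation we are concluding from, which is handled by the $\Gamma_1$ test-function observation above.
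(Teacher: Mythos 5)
Your proposal is correct and follows the same route as the paper, which simply deduces the corollary from the preceding theorem's exact computation $\Ind(\Sigma)=n+2$ for the quotients of the cubic isoparametric hypersurfaces. Your extra care about circularity in verifying instability is harmless but not needed: the index computation in the preceding theorem is independent of Theorem~\ref{thm: RPn, general}, so $\Ind(\Sigma)=n+2>0$ already gives instability without any circular reasoning.
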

Corollary~\ref{thm: RPn, embeded} and Corollary~\ref{cor: isoparametric} together form Theorem~\ref{thm: RP^n}. 

\section{Index of minimal surfaces in $\R P^3$}
\subsection{Basic spherical geometry and tessellations}
We first review some basic spherical geometry of $S^3$, following \cite{kapouleas2020index} and \cite{kapouleas2022lawson}.

Given a vector subspace $V$ of the Euclidean space $\R^4$, we denote by $V^\perp$ its orthogonal complement in $\R^4$, and we define the reflection in $\R^4$
with respect to $V$, $\refl_V : \R^4 \to \R^4$, by
$$\refl_V := \Pi_V - \Pi_{V^\perp},$$
where $\Pi_V$ and $\Pi_{V^\perp}$ are the orthogonal projections of $\R^4$ onto $V$ and $V^\perp$
respectively. That is, $\refl_V$ is the linear map which restricts to the identity on $V$ and minus the identity on $V^\perp$. Clearly the fixed point set of $\refl_V$ is $V$.

\begin{defn}[Reflections $\refl _A$]
\label{l:D:refl} 
Given any $A \subset S^3 \subset \R^4$, we define $\refl_A : S^3 \to S^3$ to be the restriction to $S^3$ of $\refl_{\Span(A)}$.
\end{defn}

\begin{defn}[Rotations $\rot_C^\phi$ and Killing fields $K_{C}$]
\label{l:D:rot} 
Given a great circle $C\subset S^3$, angle $\phi \in \R$,
and an orientation chosen on the totally orthogonal circle $C^\perp$,
we define the following: 
\begin{enumerate}[(i)]
\item the rotation about $C$ by angle $\phi$  is the element $\rot_C^\phi$ of $SO(4)$ preserving $C$ pointwise 
and rotating the totally orthogonal circle $C^\perp$ along itself by angle $\phi$ (in accordance with its chosen orientation); 
\item the Killing field $K_{C}$ on $S^3$ is given by 
$\, K_{C}\bigg\rvert_p :=  \frac{\partial}{\partial\phi}\bigg\rvert_{\phi=0} \rot_{C}^\phi(p)$
$\forall p\in S^3$. 
\end{enumerate}
\end{defn}

Let $C_0 = \{(x_1,x_2,0,0) \mid x_1^2 + x_2^2 = 1\} \subset S^3$. Then $C_0^\perp = \{(0,0,x_3,x_4) \mid x_3^2 + x_4^2 = 1\}$.
Let $\pp_0 = (1,0,0,0)$ and $\pp^0 = (0,0,1,0)$. For any angle $\phi \in \R$, we define
$$\pp_\phi := \rot^\phi_{C_0^\perp} \pp_0 = (\cos(\phi), \sin(\phi), 0,0)\hspace{1em} \text{and} \hspace{1em} \pp^\phi := \rot^\phi_{C_0^\perp} \pp^0 = (0,0,\cos(\phi), \sin(\phi)).  $$
We further define for any $\phi \in \R$ the great spheres 
$$\Sigma_\phi := \Span\{C_0^\perp \cup \{\pp_\phi\}\} \cap S^3 \hspace{1em}  \text{and} \hspace{1em}  \Sigma^\phi := \Span\{C_0 \cup \{\pp^\phi\}\} \cap S^3$$
and for any $\phi, \phi' \in \R$ the great circles
$$C_\phi^{\phi'} := \Span\{\pp_\phi, \pp^{\phi'}\} \cap S^3.$$

So in particular, $\Sigma_0 = \{(x_1, 0, x_3, x_4) \mid x_1^2 + x_3^2 + x_4^2 = 1\}$ is the great sphere orthogonal to $\pp_{\pi/2} = (0,1,0,0)$, $\Sigma_{\pi/2}$ is the great sphere orthogonal to $\pp_{0} = (1,0,0,0)$, $\Sigma^0$ is the great sphere orthogonal to $\pp^{\pi/2} = (0,0,0,1)$, and $\Sigma^{\pi/2}$ is the great sphere orthogonal to $\pp^{0} = (0,0,1,0)$.

Note that the four great spheres 
$\Sigma^0$, $\Sigma^{\pi/2}$, $\Sigma_0$, and $\Sigma_{\pi/2}$ 
form a system of four mutually orthogonal two-spheres in $S^3$. 
We will later study the subdivisions made by these two-spheres on $S^3$ and the Lawson surfaces.    
To this end we define 
$\Omu_{**}^{\pm*}$, 
$\Omu_{**}^{*\pm}$, 
$\Omu^{**}_{\pm*}$, 
and 
$\Omu^{**}_{*\pm}$, 
to be the closures of the connected components into which $S^3$ is subdivided by the removal of 
$\Sigma^0$, $\Sigma^{\pi/2}$, $\Sigma_0$, or $\Sigma_{\pi/2}$ respectively, 
chosen so that 
\begin{equation*} 
\label{Omu+} 
\pp^{\pm\pi/2}\in \Omu_{**}^{\pm*}, \quad  
\pp^{\frac{\pi}2\mp\frac{\pi}2}\in \Omu_{**}^{*\pm}, \quad  
\pp_{\pm\pi/2}\in \Omu^{**}_{\pm*}, \quad  
\pp_{\frac{\pi}2\mp\frac{\pi}2}\in \Omu^{**}_{*\pm}.  
\end{equation*} 
To further subdivide we replace $*$'s by $\pm$ signs to denote the corresponding intersections 
of these domains; for example we have  
\begin{equation*} 
\label{Omu+-} 
\Omu_{+-}^{-*} := \Omu_{+*}^{**} \cap \Omu_{*-}^{**} \cap \Omu_{**}^{-*}. 
\end{equation*}  

Clearly we have 
\begin{equation*} 
\label{partialOmu+} 
\partial \Omu_{**}^{\pm*} = \Sigma^0 , \quad  
\partial \Omu_{**}^{*\pm} = \Sigma^{\pi/2} , \quad  
\partial \Omu^{**}_{\pm*} = \Sigma_0 , \quad  
\partial \Omu^{**}_{*\pm} = \Sigma_{\pi/2}, 
\end{equation*} 
and $\Omu^{++}_{++}$ is the spherical tetrahedron 
$\overline{\pp^0 \pp^{\pi/2} \pp_0 \pp_{\pi/2} }$. As a set, $\Omu^{++}_{++} = \{(x_1, x_2, x_3, x_4) \in S^3 \mid x_1, x_2, x_3, x_4 \ge 0\}$.

Let $k \geq 2$, $m \geq 2$ be integers. For $i, j \in \frac{1}{2}\Z$, we define 
$$t_i: = \frac{(2i -1)\pi}{2m}, \hspace{1em} t_j: = \frac{(2j -1)\pi}{2k},$$
$$\pqq_i : = \pp_{t_i} \in C_0, \hspace{1em} \pqq^j : = \pp^{t^j} \in C_0^\perp.$$

Then the $4m$ points $\pqq_{i}$ for $i\in\frac{1}{2}\Z$ subdivide $C_0$ into $4m$ equal arcs of length $\pi/(2m)$ each, 
and the $4k$ points points $\pqq^j$ for $j\in\frac{1}{2}\Z$ subdivide $C_0^\perp$ into $4k$ arcs of length $\pi/(2k)$ each.

Then $\forall i,j \in \frac{1}{2}\Z$ we define
the spherical tetrahedron $\Omega_i^j: = \overline{\, \pqq_i \pqq_{i+1} \pqq^{j} \pqq^{j+1} \, }$ and 
the spherical quadrilateral $Q_i^j\subset \partial \Omega_i^j$  
consisting of the following four edges 
\begin{equation*} 
\label{D:Q} 
Q_i^j :=
\overline{ \pqq_{ i } \pqq^{ j  } }  
\cup 
\overline{ \pqq^{ j } \pqq_{ i+1  } }  
\cup 
\overline{ \pqq_{ i+1 } \pqq^{ j+1  } }  
\cup 
\overline{ \pqq^{ j+1 } \pqq_{ i  } }  
. 
\end{equation*} 
We define  $\underline{\mathbf{\Omega}} := \{\Omega^j_i\}_{i,j\in\frac{1}{2} + \Z}$. Note that $\underline{\mathbf{\Omega}}$ provides tessellations of $S^3$ with $4km$ tetrahedra. The tetrahedra in $\underline{\mathbf{\Omega}}$ are separated by the great spheres $\Sigma_{i\pi/m}$ and $\Sigma^{j\pi/k}$ for $i,j \in \Z$.

For $k=2$ and $m$ even, we have that $\Omu^{++}_{++}$ can be decomposed into $m$ isometric pieces with disjoint interiors as 
\begin{align}
\label{Omu++++} 
\Omu^{++}_{++} =  \bigcup_{i=1}^{\frac{m}{2}} \Omega_{i+\frac{1}{2}}^{\frac{1}{2}}.    
\end{align}

\subsection{The Lawson surfaces $\xi_{m-1,k-1}$}
We briefly discuss the Lawson surfaces $\xi_{m-1,k-1}$ defined in \cite{lawson1970complete}.
\begin{thm}[\cite{lawson1970complete}] 
\label{T:lawson}
Given integers $k \geq 2$, $m \geq 2$, 
and $\forall i,j\in\Z$,  
there is a unique compact connected minimal surface $D_i^j \subset \Omega_i^j$ with 
$\partial D_i^j = Q_i^j$.  
Moreover $D_i^j$ is a disc, minimizing area among such discs, 
and  
$$
\xi_{m-1,k-1} :=\bigcup_{i+j\in2\Z} D_i^j
$$ 
is an embedded connected closed (hence two-sided) smooth minimal surface 
of genus $(k-1)(m-1)$. 
\end{thm}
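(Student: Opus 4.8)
The plan is to reconstruct Lawson's construction from \cite{lawson1970complete}: build the fundamental piece $D_i^j$ by solving a Plateau problem inside the spherical tetrahedron $\Omega_i^j$, extend it across its four geodesic edges by the reflection principle, let the resulting reflection group glue all the pieces into a closed surface, and finally read off the genus from an Euler-characteristic count.

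\textbf{Step 1 (the fundamental minimal disc).} Since $k,m\ge 2$, each closed tetrahedron $\overline{\Omega_i^j}$ lies inside an open hemisphere of $S^3$, and its four faces are pieces of great two-spheres, hence totally geodesic and in particular minimal, so $\overline{\Omega_i^j}$ is mean-convex. As $Q_i^j$ is a piecewise-geodesic Jordan curve in $\partial\Omega_i^j$, the Douglas--Rad\'o solution of the Plateau problem in a Riemannian manifold (after Morrey) gives a conformal branched minimal disc of least area spanning $Q_i^j$, and the maximum principle against the totally geodesic faces (equivalently a convex-hull argument inside the hemisphere) confines its image to $\overline{\Omega_i^j}$. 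Interior regularity together with the absence of interior branch points for two-dimensional area minimizers (Osserman, Alt, Gulliver), and the boundary-regularity theory for Plateau solutions along analytic boundary arcs, promote this to a compact embedded minimal disc $D_i^j$, smooth up to the interiors of the four edges of $Q_i^j$. For the uniqueness clause one uses once more that $\overline{\Omega_i^j}$ sits in a convex hemisphere: a Rad\'o-type argument shows that every minimal surface in $\overline{\Omega_i^j}$ with boundary $Q_i^j$ is a graph over a geodesically convex planar domain, hence is the unique such minimal graph by the maximum principle, and in particular equals the area minimizer $D_i^j$.

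\textbf{Step 2 (assembling $\xi_{m-1,k-1}$ by reflection).} Each edge of $Q_i^j$ is a geodesic arc carried by a great circle $C$, and $\rot_C^\pi$ is an isometry of $S^3$; by the reflection principle for minimal surfaces, $D_i^j \cup \rot_C^\pi(D_i^j)$ is a smooth minimal surface across that edge. A direct check of the linear map $\rot_C^\pi$ shows it carries $\overline{\Omega_i^j}$ onto a neighbouring tetrahedron of the tessellation whose index pair again has $i+j$ even, and carries $Q_i^j$ onto the boundary quadrilateral there, so by uniqueness (Step 1) $\rot_C^\pi(D_i^j)$ is exactly the corresponding fundamental disc. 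Iterating over the four edges generates a group of isometries acting transitively on $\{\Omega_i^j : i+j\in 2\Z\}$, the reflected copies fit together smoothly along every shared edge, and hence $\xi_{m-1,k-1}=\bigcup_{i+j\in 2\Z}D_i^j$ is a smooth minimal surface away from the vertices $\pqq_i, \pqq^j$. Since distinct tetrahedra have disjoint interiors and each $D_i^j$ is embedded in its own tetrahedron, $\xi_{m-1,k-1}$ is embedded; it is connected because the reflections link all the pieces, and closed because every edge is interior to the union.

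\textbf{Step 3 (smoothness at the vertices and the genus).} The remaining regularity issue --- and the one I expect to be the real obstacle --- is smoothness of $\xi_{m-1,k-1}$ at the points $\pqq_i, \pqq^j$, which are corners of the quadrilaterals. At $\pqq_i$ the two incident edges of $Q_i^j$ are meridian arcs from the pole $\pqq_i$ to the points $\pqq^j, \pqq^{j+1}$ on the totally orthogonal circle $C_0^\perp$, so they meet at angle equal to the longitudinal spacing $\pi/k$; exactly $2k$ reflected fundamental discs come together at $\pqq_i$, the total angle is $2k\cdot(\pi/k)=2\pi$, and one verifies (as Lawson does, from the boundary behaviour of the Plateau solution at the corners of $Q_i^j$ and the symmetry of the configuration) that the assembled surface is in fact smooth there, not merely a topological point; likewise at each $\pqq^j$ with $2m$ discs and spacing $\pi/m$. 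Finally, taking the discs $D_i^j$ with $i+j$ even as the $2$-cells endows $\xi_{m-1,k-1}$ with a CW structure having $F=2km$ faces, $E=4km$ edges (each arc $\overline{\pqq_a\pqq^b}$ is shared by exactly two of the chosen discs), and $V=2(m+k)$ vertices (every $\pqq_i$ and $\pqq^j$ indexing the tessellation lies on the surface); therefore $\chi(\xi_{m-1,k-1})=2m+2k-4km+2km=2-2(m-1)(k-1)$, and $\xi_{m-1,k-1}$ is a closed surface of genus $(m-1)(k-1)=(k-1)(m-1)$. Note that keeping this count honest relies on the no-interior-branch-point statement from Step 1, so that each $D_i^j$ is an honest embedded disc.
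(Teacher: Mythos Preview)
The paper does not supply its own proof of this theorem: it is quoted verbatim as a result of Lawson \cite{lawson1970complete} and then used as a black box. So there is no ``paper's proof'' to compare against; your write-up is a reconstruction of Lawson's original argument, and as such it follows the same route Lawson took (Plateau problem in the tetrahedron, Schwarz reflection across the geodesic edges, Euler-characteristic count for the genus). Your sketch is faithful to that construction and the combinatorial count $V-E+F = 2(m+k)-4km+2km = 2-2(m-1)(k-1)$ is correct. The one place where your outline is noticeably softer than Lawson's actual paper is the uniqueness clause in Step~1: the ``Rad\'o-type graphicality'' you invoke is not automatic for a quadrilateral whose vertices do not lie in a single great two-sphere, and Lawson's uniqueness argument is more delicate than this; but since the present paper only \emph{uses} the existence part of the statement, that gap is immaterial here.
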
 
For any $k, m$, the Lawson surface $\xi_{m-1,k-1}$ is congruent
to $\xi_{k-1,m-1}$. Taking $k = 1$ with any $m$ gives the great two-sphere
and taking $m = k = 2$ gives the Clifford torus.

If we let $z = x_1 + ix_2$ and $w=x_3 + ix_4$ where $x_1,x_2,x_3, x_4$ are the coordinates for $\R^4$, then by \cite{lawson1970complete}, the group of symmetries for the Lawson surface $\xi_{m-1,k-1}$ corresponds to the group of symmetries in $O(4)$ of the equation 
$$\im(z^m + |w|^{m-k}w^k) = 0.$$

From the symmetry we deduce that when $m,k$ are both even and when $m,k$ are both odd,  $\xi_{m-1,k-1}$ is invariant under the antipodal map. In either case, quotienting by the antipodal map yields an embedded
minimal hypersurface $\overline{\xi}_{m-1,k-1} = \xi_{m-1,k-1}/\{\pm\}$ in the real projective space $\R P^{3} = S^3/\{\pm\}$,
which we will also call a Lawson surface. Since $\xi_{m-1,k-1}$ is a double cover of $\overline{\xi}_{m-1,k-1}$, we have $\chi(\xi_{m-1,k-1}) = 2\chi(\overline{\xi}_{m-1,k-1})$, so $\chi(\overline{\xi}_{m-1,k-1}) = 1-(k-1)(m-1)$.

When $m,k$ are both even, $\overline{\xi}_{m-1,k-1}$ is a two-sided minimal surface of genus $\frac{(k-1)(m-1) + 1}{2}$.

When $m,k$ are both odd, $\chi(\overline{\xi}_{m-1,k-1})$ is odd, so $\overline{\xi}_{m-1,k-1}$ is a non-orientable (hence one-sided) minimal surface of $\R P^3$.

In the following, we will restrict to the case $k=2$ and $m$ even. Then quotienting by the antipodal map yields a two-sided embedded minimal hypersurface $\overline{\xi}_{m-1,1} = \xi_{m-1,1}/\{\pm\}$ of genus $\frac{m}{2}$ in $\R P^{3}$. The reflections $\refl_{\Sigma^{j\pi/2}}, \refl_{\Sigma_{i\pi/m}} \forall i,j \in \Z$ are symmetries of $\xi_{m-1,1}$. 
Let us fix a choice of the unit normal vector field $N$ on $\xi_{m-1,1}$. Given a great circle $C\subset S^3$, we define the non-exceptional Jacobi field $J_C := K_C \cdot N$, where $K_C$ is defined as in Definition~\ref{l:D:rot} (ii). 

\begin{lem}[Non-exceptional Jacobi fields]\cite[Lemma 5.9]{kapouleas2020index} \label{lem: Jacobi basis}
Let $m \ge 3$.
Then $J_C$, $J_{C^\perp}$, and $J_{C_\phi^{\phi'}}$ for $\phi, \phi' \in \{0, \pi/2\}$ form a basis of the space of non-exceptional Jacobi fields on $\xi_{m-1,1}$. 
\end{lem}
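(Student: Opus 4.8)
The plan is to identify the space of non-exceptional Jacobi fields with the image of the Lie algebra of Killing fields $\mathfrak{so}(4)$ under the map $K \mapsto K \cdot N$, and then cut down this $6$-dimensional space using the symmetries of $\xi_{m-1,1}$ listed just before the statement. The Killing fields $K_C, K_{C^\perp}, K_{C_\phi^{\phi'}}$ for $\phi,\phi' \in \{0,\pi/2\}$ span $\mathfrak{so}(4)$ (indeed any basis of $\mathfrak{so}(4)$ consisting of rotations about great circles works, and these six rotations are readily checked to be linearly independent as elements of $\mathfrak{so}(4)\cong \Lambda^2\R^4$, since $C$ and $C^\perp$ give the two "diagonal" generators and the four $C_\phi^{\phi'}$ give the remaining directions). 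So the six listed $J$'s certainly \emph{span} the space of non-exceptional Jacobi fields; the entire content of the lemma is that the linear map $\Psi: \mathfrak{so}(4) \to C^\infty(\xi_{m-1,1})$, $K \mapsto K\cdot N$, is \emph{injective} for $m \ge 3$, so that these six fields are in fact linearly independent and hence a basis.

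First I would record that a Killing field $K$ on $S^3$ lies in $\ker \Psi$ precisely when $K$ is everywhere tangent to $\xi_{m-1,1}$, i.e. when the one-parameter subgroup generated by $K$ preserves $\xi_{m-1,1}$. Thus $\ker\Psi$ is the Lie algebra of the connected identity component of the isometry group of $S^3$ preserving the Lawson surface. So the lemma reduces to the purely geometric claim that for $m\ge 3$, the surface $\xi_{m-1,1}$ admits no nontrivial one-parameter group of ambient rotational symmetries. To prove this I would use the symmetry description from Lawson: the symmetry group in $O(4)$ of $\xi_{m-1,1}$ is the group of symmetries of $\operatorname{Im}(z^m + |w|^{m-2}w^2) = 0$ in the coordinates $z = x_1+ix_2$, $w = x_3 + ix_4$. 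For $m \ge 3$ this equation has only a finite group of linear symmetries (the dihedral-type symmetries generated by $z \mapsto e^{2\pi i/m}z$, $w\mapsto -w$, complex conjugation, and the reflections $\refl_{\Sigma^{j\pi/2}}, \refl_{\Sigma_{i\pi/m}}$), hence a discrete isometry group, hence $\ker\Psi = 0$. Equivalently and more self-containedly: a continuous family of symmetries would have to act on the finite set of "vertex" points $\pqq_i, \pqq^j$ (the branch points of the underlying quadrilateral structure, which are intrinsically characterized, e.g. as the points where several reflection planes meet the surface), forcing the family to be trivial.

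The main obstacle is making the "no continuous symmetries" step rigorous without simply quoting Lawson. The cleanest route, which I would take, is: (a) any $K \in \ker\Psi$ generates rotations $\rot_C^\phi$ fixing $\xi_{m-1,1}$ setwise; (b) such rotations must permute the great spheres $\Sigma_{i\pi/m}$, $\Sigma^{j\pi/2}$ that separate the fundamental pieces $D_i^j$ — this uses that these great spheres are exactly the fixed-point sets of the reflection symmetries $\refl_{\Sigma_{i\pi/m}}, \refl_{\Sigma^{j\pi/2}}$ of the surface, a property preserved by any ambient symmetry; (c) a connected group of isometries permuting a finite configuration of great spheres must fix each of them, hence fix the circles $C_0 = \bigcap_j \Sigma^{j\pi/2}$ and $C_0^\perp = \bigcap_i \Sigma_{i\pi/m}$ pointwise (for $m\ge 3$ there are at least three distinct $\Sigma_{i\pi/m}$, pinning down $C_0^\perp$), and one checks the only such $K$ is $0$. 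I would also note the hypothesis $m \ge 3$ is genuinely needed: for $m = 2$ the Clifford torus $\xi_{1,1}$ has a two-torus of rotational symmetries, so $\Psi$ has a $2$-dimensional kernel and the six $J$'s are \emph{not} independent — this is a useful sanity check on the statement and on where the argument uses $m\ge 3$.
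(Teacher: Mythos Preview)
The paper does not prove this lemma at all; it is quoted verbatim from \cite[Lemma~5.9]{kapouleas2020index}, so there is no in-paper argument to compare against. Your overall strategy is sound and is essentially how one would prove this: the six listed Killing fields are easily seen to form a basis of $\mathfrak{so}(4)\cong\Lambda^2\R^4$, so the six $J$'s span the non-exceptional Jacobi fields, and the whole content is injectivity of $\Psi:K\mapsto K\cdot N$, equivalently the absence of continuous ambient symmetries of $\xi_{m-1,1}$ for $m\ge 3$.

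There is, however, a real gap in your ``self-contained'' route (a)--(c). In step~(b) you need that the great spheres $\Sigma_{i\pi/m},\Sigma^{j\pi/2}$ are \emph{exactly} the fixed-point sets of the reflection symmetries of the surface. But that is precisely Lawson's determination of the full symmetry group, which you said you were trying to avoid; without it you cannot rule out further reflection symmetries (and indeed for $m=2$ every $\refl_{\Sigma_\phi}$ is one, which is why the argument must fail there). Step~(c) also overstates the conclusion: fixing each $\Sigma_{i\pi/m}$ setwise only yields $C_0^\perp=\bigcap_i\Sigma_{i\pi/m}$ fixed \emph{setwise}, and your parenthetical ``at least three $\Sigma_{i\pi/m}$'' does not upgrade this to pointwise. (One can finish differently: at the Lie-algebra level, $K$ preserving $\Sigma^0$ and $\Sigma^{\pi/2}$ forces $Ke_3=Ke_4=0$, and then preserving any single $\Sigma_\phi$ forces $K=0$; but this works equally for $m=2$, so again the burden is entirely on step~(b).)

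Your alternative via the vertex points $\pqq_i,\pqq^j$ is the better route and does avoid Lawson, but it needs one nontrivial ingredient you only gesture at: an \emph{intrinsic} characterization of these points on $\xi_{m-1,1}$ for $m\ge 3$ (for $m=2$ the Clifford torus is homogeneous, so no such characterization can exist). Once that is supplied, any ambient symmetry permutes the finitely many vertices, a connected family fixes each, and since there are at least three on each of $C_0$ and $C_0^\perp$ the symmetry is the identity on $C_0\cup C_0^\perp$ and hence on $S^3$.
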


\begin{lem}[Symmetries of Jacobi fields]\cite[Lemma 5.6]{kapouleas2020index} \label{lem: Jacobi symmetry}
Let $m \ge 4$ be an even integer. We define $0_\perp := \pi/2$ and $(\pi/2)_\perp := 0$. Then for $\xi_{m-1,1}$,
\begin{enumerate}[(i)]
\item $J_C$ is odd under $\refl_{\Sigma^0}$ and $\refl_{\Sigma^{\pi/2}}$ and even under $\refl_{\Sigma_0}$ and $\refl_{\Sigma_{\pi/2}}$.
\item $J_{C^\perp}$ is odd under $\refl_{\Sigma_0}$ and $\refl_{\Sigma_{\pi/2}}$ and even under $\refl_{\Sigma^0}$ and $\refl_{\Sigma^{\pi/2}}$.
\item If $\phi, \phi' \in \{0, \pi/2\}$, then $J_{C_\phi^{\phi'}}$ is odd under $\refl_{\Sigma_\phi}$ and $\refl_{\Sigma^{\phi'}}$ and even under $\refl_{\Sigma_{\phi_\perp}}$ and $\refl_{\Sigma^{\phi'_\perp}}$.
\end{enumerate}
\end{lem}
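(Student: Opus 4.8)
The plan is to reduce the statement to two observations: how the reflections act on the great circles $C$, $C^\perp$, $C_\phi^{\phi'}$ (equivalently on their Killing fields $K_{\bullet}$), and how the fixed normal field $N$ on $\xi_{m-1,1}$ transforms under each reflection. Recall $J_{C} = K_{C}\cdot N$, so for any symmetry $\sigma \in O(4)$ of $\xi_{m-1,1}$ we have $\sigma^{\#}J_{C} = (\sigma_{*}^{-1}K_{C})\cdot(\sigma^{\#}N)$, and thus the parity of $J_C$ under $\sigma$ is the product of the parity of $K_{C}$ (as a vector field, under $\sigma_*$) and the parity of $N$ under $\sigma$. So I would first record, for each of the four reflections $\refl_{\Sigma^0}, \refl_{\Sigma^{\pi/2}}, \refl_{\Sigma_0}, \refl_{\Sigma_{\pi/2}}$, whether it sends $N$ to $N$ or $-N$; then separately compute the action on the relevant Killing fields.

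\textbf{Step 1: parity of $N$.} Using the defining equation $\im(z^m + |w|^{m-k}w^k)=0$ with $k=2$ and $m$ even, each $\refl_{\Sigma_{i\pi/m}}$ and $\refl_{\Sigma^{j\pi/2}}$ is a symmetry of $\xi_{m-1,1}$. I would check which of these reflections, when restricted to $\xi_{m-1,1}$, are orientation-preserving versus orientation-reversing on the surface; equivalently, whether $d\sigma$ preserves or reverses the normal direction. Because $\xi_{m-1,1}$ is two-sided (so $N$ is globally well-defined up to sign) and connected, this parity is a single sign for each $\sigma$, and it can be read off from how $\sigma$ acts on a conveniently chosen point of $\xi_{m-1,1}$ together with its tangent plane there. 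This is the step that actually uses the geometry of the Lawson surface rather than just $S^3$; I expect the cleanest route is to use the tessellation description $\xi_{m-1,1} = \bigcup_{i+j\in 2\Z} D_i^j$ and track how the reflections permute the discs $D_i^j$, the orientation-reversing permutations being exactly those that flip $N$.

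\textbf{Step 2: parity of the Killing fields.} For a great circle $C$ and a reflection $\refl_V$, the pushforward $(\refl_V)_* K_C$ is $\pm K_{\refl_V(C)}$, and when $\refl_V$ fixes $C$ setwise it equals $\pm K_C$; the sign is $+1$ iff $\refl_V$ preserves the chosen orientation of $C^\perp$. Concretely $C_0 = \Sigma_0 \cap \Sigma_{\pi/2}$ lies inside both $\Sigma_0$ and $\Sigma_{\pi/2}$ and is orthogonal to both $\Sigma^0$ and $\Sigma^{\pi/2}$ (indeed $C_0^\perp \subset \Sigma^0 \cap \Sigma^{\pi/2}$); a direct matrix computation with $\refl_{\Sigma^0} = \refl_{\Span\{C_0,\pp^{\pi/2}\}}$ etc. (all diagonal $\pm 1$ matrices in the standard basis of $\R^4$) gives the sign of $(\refl)_*$ on $K_{C_0}$, and symmetrically on $K_{C_0^\perp}$ and on each $K_{C_\phi^{\phi'}}$. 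Then combining the signs from Steps 1 and 2 multiplicatively yields the three itemized parity tables; item (iii) follows from items (i)--(ii) by applying the rotation $\rot_{C_0^\perp}^{\phi}$ (resp.\ $\rot_{C_0}^{\phi'}$), which is also a symmetry of $\xi_{m-1,1}$ when $\phi,\phi'\in\{0,\pi/2\}$ and which conjugates the relevant reflections and circles appropriately.

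\textbf{Main obstacle.} The routine linear algebra in Step 2 is mechanical; the real content is Step 1, pinning down the sign of $N$ under each reflection, since $N$ is only characterized implicitly as the unit normal of the minimizing discs $D_i^j$. The safest argument is combinatorial: identify the reflections' action on the index set $\{(i,j): i+j\in 2\Z\}$ and note that $\refl_{\Sigma_0}$ and $\refl_{\Sigma_{\pi/2}}$ preserve each $D_i^j$'s ``orientation class'' while $\refl_{\Sigma^0}$ and $\refl_{\Sigma^{\pi/2}}$ swap the two classes $i+j\in 2\Z$ in a way that reverses the induced orientation — this is exactly what is quoted from \cite{kapouleas2020index}, so in the write-up I would either cite that computation or reproduce it via the $z,w$ coordinates, checking the effect of each reflection on $\im(z^m+|w|^{m-2}w^2)$ and on a normal vector at a fixed point such as a vertex $\pqq_i$ or $\pqq^j$ of the tessellation.
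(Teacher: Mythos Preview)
The paper does not give its own proof of this lemma: it is stated with a direct citation to \cite[Lemma 5.6]{kapouleas2020index} and no proof environment follows. So there is nothing in the present paper to compare your argument against; the intended ``proof'' here is simply the reference.

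That said, your outline is the standard one and is essentially what Kapouleas--Wiygul carry out: factor $J_C = \langle K_C, N\rangle$, so that for a symmetry $\sigma$ the parity of $J_C$ is the product of the sign in $\sigma_* K_C = \pm K_C$ and the sign in $d\sigma(N) = \pm N\circ\sigma$, then compute each factor. Your Step~2 is indeed mechanical linear algebra with diagonal $\pm 1$ matrices. For Step~1, your heuristic ``orientation-reversing permutations of the $D_i^j$ are exactly those that flip $N$'' needs a small correction: each $\refl_{\Sigma}$ has $\det = -1$ in $O(4)$, so the sign on $N$ equals $-\det\bigl(d\sigma|_{T\xi}\bigr)$, meaning $N$ is \emph{preserved} exactly when the restriction of $\sigma$ to the surface is orientation-\emph{reversing}. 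The cleanest way to pin this down is the one you mention at the end: evaluate at a specific point (e.g.\ a point on a geodesic edge $\overline{\pqq_i\pqq^j}\subset\xi_{m-1,1}$) where both the tangent plane and the normal direction are explicit, rather than tracking the disc permutation combinatorially. With that adjustment your proposal would go through and would reproduce the cited result.
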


Kapouleas and Wiygul \cite{kapouleas2020index} computed the index and nullity of the Lawson surface $\xi_{m-1,1}$ as follows:

\begin{thm}{\cite[Theorem 6.21]{kapouleas2020index}}\label{thm: Lawson index}
If $m \ge 3$, then the Lawson surface $\xi_{m-1,1}$ has index $2m+2$. Moreover, it has nullity 6 and no exceptional Jacobi fields.
\end{thm}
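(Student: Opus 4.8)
The plan is to establish this by an explicit count of the low-lying spectrum of the Jacobi operator, in the spirit of \cite{kapouleas2020index}. Write $\xi := \xi_{m-1,1} \subset S^3$; its Jacobi operator is $\cL = \Delta_\xi + |\sff_\xi|^2 + 2$, and the claim is that $\cL$ has exactly $2m+2$ negative eigenvalues and that $\ker\cL$ is $6$-dimensional. Since Lemma~\ref{lem: Jacobi basis} already produces six linearly independent non-exceptional Jacobi fields, the nullity assertion --- and with it the absence of exceptional Jacobi fields --- is equivalent to the upper bound $\Nul(\xi) \le 6$; so everything reduces to pinning down the non-positive part of the spectrum of $\cL$ exactly. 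The engine will be the symmetry group $\mathscr{G} \le O(4)$ preserving $\xi$: as recalled before Lemma~\ref{lem: Jacobi symmetry}, $\mathscr{G}$ contains $\refl_{\Sigma_{i\pi/m}}$ and $\refl_{\Sigma^{j\pi/2}}$ for all $i,j \in \Z$, hence acts on $\xi$ through (at least) a dihedral group of order $2m$ fixing $C_0^\perp$ pointwise together with the order-four group generated by $\refl_{\Sigma^0}$ and $\refl_{\Sigma^{\pi/2}}$.

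First I would reduce to a fundamental domain. Since $\cL$ commutes with $\mathscr{G}$, the space $L^2(\xi)$ decomposes into $\mathscr{G}$-isotypic summands that $\cL$ preserves, and $\Ind(\xi)$ and $\Nul(\xi)$ are the sums over isotypes of the respective contributions, weighted by the dimensions of the irreducible representations. By Theorem~\ref{T:lawson}, $\xi$ is the union of the congruent minimal disks $D_i^j$, and a single such disk --- more precisely, a suitable subdivision of it by a further reflection --- is a fundamental domain for $\mathscr{G}$; an eigenfunction in a prescribed isotype is then exactly an eigenfunction of $\cL$ on that domain with Dirichlet or Neumann conditions imposed on each bounding arc according to the character, and the residual rotational symmetry permits a further decomposition of each sector indexed by an angular frequency $\ell \in \Z_{\ge 0}$.

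Next I would carry out the count sector by sector. The lower bounds are the easy half: for the finitely many "small" sectors one exhibits explicit non-positive eigenfunctions --- restrictions to $\xi$ of the ambient Killing fields $K_C \cdot N$, which are Jacobi fields (eigenvalue $0$) and, by Lemma~\ref{lem: Jacobi symmetry}, occupy six distinct sectors, together with restrictions of the coordinate functions $l_w$ and $f_w$, which are low test functions for $\cL$. The upper bounds are the substance: for every sector one must rule out further non-positive eigenvalues, which I would do via a quantitative stability/comparison estimate on the fundamental domain --- controlling the number of eigenvalues of the mixed boundary-value problem that lie below $\sup_\xi(|\sff_\xi|^2 + 2)$ in terms of the Laplace spectrum of the domain, and noting that for all large $\ell$ the angular eigenvalue already exceeds this supremum, so those sectors contribute nothing. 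Matching the two counts --- in particular, showing that each of the six sectors carrying a Jacobi field contributes a kernel of dimension exactly one, and no other sector has any kernel --- yields $\Nul(\xi) = 6$ with no exceptional Jacobi fields, and summing the negative counts gives $\Ind(\xi) = 2m+2$.

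The hard part will be the sharp upper bounds for the handful of low-frequency, low-symmetry sectors. Because $\xi_{m-1,1}$ is \emph{not} isoparametric, $|\sff_\xi|^2$ is non-constant and the reduced problems are genuine two-dimensional eigenvalue problems with no closed form, so obtaining the \emph{exact} counts rather than mere bounds requires delicate control of the geometry of the disks $D_i^j$ --- uniform estimates on the conformal factor and on $|\sff_\xi|^2$, possibly by comparison with limiting configurations as $m$ varies --- together with careful bookkeeping to match $\mathscr{G}$-characters to boundary conditions without over- or under-counting. This is also where the hypothesis $m \ge 3$ is forced: the formula $2m+2$ already fails at $m = 2$, where $\xi_{1,1}$ is the Clifford torus, whose index is $5$.
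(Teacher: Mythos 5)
The paper does not prove this statement at all: it is imported verbatim (modulo a typo, see below) from Kapouleas--Wiygul \cite[Theorem 6.21]{kapouleas2020index}, so there is no internal proof to compare yours against. What you have written is a reasonable high-level reconstruction of the \emph{strategy} of that cited paper --- decompose $L^2(\xi_{m-1,1})$ under the symmetry group, reduce to mixed Dirichlet/Neumann problems on a fundamental domain, and count low eigenvalues sector by sector --- but it is a plan, not a proof. The entire content of the theorem lives in the step you explicitly defer (``the hard part will be the sharp upper bounds for the handful of low-frequency, low-symmetry sectors''): without the exact eigenvalue counts on the fundamental domains there is no upper bound on the index or the nullity, and hence no conclusion. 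In Kapouleas--Wiygul this is carried out via carefully chosen comparison domains, explicit rotationally invariant barriers, and a domain-decomposition inequality (their Proposition A.1, which the present paper reuses in Proposition~\ref{prop: index decomp, W}); none of that machinery appears in your sketch, so the proposal cannot be accepted as a proof of the statement.

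A separate, concrete issue: the index value. The correct count is $2m+1$, not $2m+2$; the theorem as printed in this paper contains a typo, and the paper's own later computations confirm this --- it uses $\Ind(V^{++})+\Ind(V^{+-})+\Ind(V^{-+})+\Ind(V^{--}) = (2m-1)+1+1+0 = 2m+1$, and the even/odd split $(m-1)+(m+2)=2m+1$. Your sanity check at $m=2$ actually detects this: the Clifford torus has index $5 = 2\cdot 2+1$, so the correct formula extends to $m=2$ rather than ``failing'' there. Since your proposal sets out to produce exactly $2m+2$ negative eigenvalues, it is aiming at the wrong number, and any count that landed on $2m+2$ would necessarily contain an error.
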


\subsection{Index and nullity of $\overline{\xi}_{m-1,1}$ in $\R P^3$}

In this subsection, we assume that $m \ge 4$ is an even integer. Since $\overline{\xi}_{m-1,1}$ is two-sided, by the discussion at the beginning of Section~\ref{sec: RPn}, we have that with respect to the antipodal map $\tau$, $\Ind(\overline{\xi}_{m-1,1}) = \Ind_\cE({\xi}_{m-1,1})$ and $\Nul(\overline{\xi}_{m-1,1}) = \Nul_\cE({\xi}_{m-1,1})$. Thus our goal is to decompose the index and nullity of $\xi_{m-1,1}$ into even and odd ones with respect to $\tau$. 

Following \cite[Section 6]{kapouleas2020index}, we say a function on a domain in a surface is \emph{piecewise smooth} if it is continuous on the domain, the domain can be subdivided into domains by a finite union of piecewise-smooth embedded curves, and on the closure of each of these domains the function is smooth.
We define
\begin{align*}
V^{\pm\pm}:= \{u\in C_{pw}^\infty(\xi_{m-1,1})  \, : \, u\circ \refl_{\Sigma^0}=\pm u \text{ and } u\circ\refl_{\Sigma^{\pi/2}}=\pm u\,\}, 
\end{align*}
where $C_{pw}^\infty(\xi_{m-1,1})$ denotes the piecewise smooth functions on $\xi_{m-1,1}$ and the $\pm$ signs are taken correspondingly, the first one referring to $\refl_{\Sigma^0}$ and the second one to $\refl_{\Sigma^{\pi/2}}$. 
Since $\refl_{\Sigma^0}$ and $\refl_{\Sigma^{\pi/2}}$ are involutive and commutative, we have the orthogonal decomposition
\begin{align*} 
V = V^{++} \oplusu V^{+-} \oplusu V^{-+} \oplusu V^{--}, 
\end{align*} 
where we use $\oplusu$ to mean that this is not only a direct sum in the sense of linear spaces, but the summands of the direct sum are also invariant under the Jacobi operator $\cL$. Therefore
the same decomposition holds for the corresponding eigenspaces of $\cL$ as well. 
Then similar to the definition of even and odd Morse indices, we let $\Ind(V^{\pm \pm})$ denote the number of negative eigenvalues (counted with multiplicity) of $\cL$ such that the corresponding eigenfunctions fall into $V^{\pm \pm}$. Then 
$$\Ind({\xi}_{m-1,1}) = \Ind(V^{++}) + \Ind(V^{+-}) + \Ind(V^{-+}) + \Ind(V^{--}).$$

Notice that for $u \in C_{pw}^\infty(\xi_{m-1,1})$, the condition $u\circ \refl_{\Sigma^0} = -u$ implies that $u$ vanishes on $\Sigma^{0} \cap \xi_{m-1,1}$, while the condition $u\circ \refl_{\Sigma^0} = u$ implies that if we let $\eta_{i}$ be a choice of unit normal vector field on $\Sigma^{0} \cap \xi_{m-1,1} \subset \xi_{m-1,1}$, then $\nabla_{\eta_{i}} u = 0$ along $\Sigma^{0} \cap \xi_{m-1,1}$. Similarly for the condition $u\circ \refl_{\Sigma^{\pi/2}} = u$. Thus a function in $V^{\pm\pm}$ restricts to a function on $\Omega^{++}_{**}\cap \xi_{m-1,1}$ with Dirichlet or Neumann boundary conditions on the boundary curves $\Sigma^0 \cap \xi_{m-1,1}$ and $\Sigma^{\pi/2} \cap \xi_{m-1,1}$, where $+$ corresponds to the Neumann boundary condition and $-$ corresponds to the Dirichlet boundary condition. 
Conversely, given a piecewise smooth function on $\Omega^{++}_{**} \cap \xi_{m-1,1}$ with Dirichlet or Neumann boundary conditions on the boundary curves, we can use $\refl_{\Sigma^0}$ and $\refl_{\Sigma^{\pi/2}}$ to reflect and get a function in the appropriate $V^{\pm \pm}$.
Thus functions in $V^{\pm\pm}$ are in bijection with piecewise smooth functions on $\Omega^{++}_{**}\cap \xi_{m-1,1}$ with appropriate boundary conditions on the boundary curves $\Sigma^0 \cap \xi_{m-1,1}$ and $\Sigma^{\pi/2} \cap \xi_{m-1,1}$. 

\begin{prop}\label{prop: index decomp}
Let $m \ge 4$ be an even integer. We have the following:
\begin{enumerate}[(i)]
\item \cite[Proposition 6.9]{kapouleas2020index} $\Ind(V^{++}) = 2m-1$.
\item \cite[Proposition 6.13 and Proof of Proposition 6.17]{kapouleas2020index} $\Ind(V^{+-}) = 1$, and 
the first eigenfunction of $\cL$ in $V^{+-}$ is even with respect to the reflections $\refl_{\Sigma_0}$ and $\refl_{\Sigma_{\pi/2}}$.
\item \cite[Proposition 6.13 and Proof of Proposition 6.17]{kapouleas2020index} $\Ind(V^{-+}) = 1$, and 
the first eigenfunction of $\cL$ in $V^{-+}$ is even with respect to the reflections $\refl_{\Sigma_0}$ and $\refl_{\Sigma_{\pi/2}}$.
\item \cite[Proposition 6.3 (ii)]{kapouleas2020index} $\Ind(V^{--}) = 0$.
\end{enumerate} 
\end{prop}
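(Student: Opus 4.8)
The plan is to obtain all four index counts, together with the two evenness assertions, directly from the computations of Kapouleas and Wiygul in \cite{kapouleas2020index}, once their notation is matched with ours. The only input beyond the citations is the correspondence set up just before the statement: a function in $V^{\pm\pm}$ is the same thing as a piecewise smooth function on the fundamental region $\Omega^{++}_{**}\cap\xi_{m-1,1}$ carrying a Neumann condition (the $+$ sign) or a Dirichlet condition (the $-$ sign) along $\Sigma^0\cap\xi_{m-1,1}$ and, independently, along $\Sigma^{\pi/2}\cap\xi_{m-1,1}$. Since each $V^{\pm\pm}$ is $\cL$-invariant, $\Ind(V^{\pm\pm})$ is simply the number of negative eigenvalues of $\cL$ on $\Omega^{++}_{**}\cap\xi_{m-1,1}$ under the corresponding mixed boundary conditions, and this is exactly one of the quantities tabulated in \cite{kapouleas2020index}.

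First I would record (i): Kapouleas--Wiygul's Proposition~6.9 computes the number of negative eigenvalues of $\cL$ among functions even under both $\refl_{\Sigma^0}$ and $\refl_{\Sigma^{\pi/2}}$ to be $2m-1$, which is $\Ind(V^{++})$. For (iv), their Proposition~6.3(ii) shows that no negative eigenvalue of $\cL$ is carried by functions odd under both $\refl_{\Sigma^0}$ and $\refl_{\Sigma^{\pi/2}}$, so $\Ind(V^{--})=0$. For the mixed pieces (ii) and (iii), their Proposition~6.13 gives $\Ind(V^{+-})=\Ind(V^{-+})=1$; one of these two equalities also follows from the other via the orthogonal symmetry $(x_1,x_2,x_3,x_4)\mapsto(x_1,x_2,x_4,x_3)$ of $\xi_{m-1,1}$, which interchanges $\Sigma^0$ with $\Sigma^{\pi/2}$ while preserving $\Sigma_0$ and $\Sigma_{\pi/2}$. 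Adding the four contributions recovers $\Ind(\xi_{m-1,1})=(2m-1)+1+1+0=2m+2$, in agreement with Theorem~\ref{thm: Lawson index}.

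For the extra evenness statements in (ii) and (iii) I would refine the decomposition once more: inside $V^{+-}$ (resp. $V^{-+}$) split further into $\oplusu$-summands according to the signs under $\refl_{\Sigma_0}$ and $\refl_{\Sigma_{\pi/2}}$. The unique negative eigenfunction lies in exactly one of these four summands, and the proof of \cite[Proposition~6.17]{kapouleas2020index} identifies it as the summand even under both reflections --- for instance by passing to a yet smaller fundamental domain cut out by $\refl_{\Sigma_0}$ and $\refl_{\Sigma_{\pi/2}}$ and checking that the other three summands support no negative eigenvalue. I would simply quote this. Recording it is worthwhile because $\tau=\refl_{\Sigma^0}\refl_{\Sigma^{\pi/2}}\refl_{\Sigma_0}\refl_{\Sigma_{\pi/2}}$, so the behavior of these eigenfunctions under the four reflections pins down their parity under the antipodal map, which is exactly what one needs afterwards to split $\Ind(\xi_{m-1,1})$ (and $\Nul(\xi_{m-1,1})$) into even and odd parts.

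The main obstacle here is bookkeeping rather than analysis: one has to verify that the spaces we call $V^{\pm\pm}$ coincide with the ones whose spectra are analyzed in \cite{kapouleas2020index} --- in particular that our convention that $+$ corresponds to Neumann and $-$ to Dirichlet matches theirs, and that the fundamental domains agree --- and one has to extract the evenness assertion from the \emph{proof} of their Proposition~6.17, where it appears as an intermediate step rather than as a named result.
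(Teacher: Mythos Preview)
Your proposal is correct and matches the paper's approach: the paper gives no proof at all for this proposition, treating each item as a direct citation to the indicated results of Kapouleas--Wiygul, and your write-up is simply an expanded gloss on those same citations together with the boundary-condition dictionary already established before the statement. One small slip: your consistency check $(2m-1)+1+1+0$ equals $2m+1$, not $2m+2$; this is harmless for the proposition itself but you should reconcile it with the quoted index value before moving on.
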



We further study the negative eigenvalues in $V^{++}$. 
Using reflections, we can further define spaces
$$V^{++}_{\pm\pm}:= \{u\in V^{++}  \, : \, u\circ \refl_{\Sigma_0}=\pm u \text{ and } u\circ\refl_{\Sigma_{\pi/2}}=\pm u\,\}. 
$$
Be cautious that this is \emph{different} from the spaces $V^{++}_{\pm\pm}$ defined in \cite[(6.4)]{kapouleas2020index}.

Since the four reflections $\refl_{\Sigma^0}$, $\refl_{\Sigma^{\pi/2}}$, $\refl_{\Sigma_0}$ and $\refl_{\Sigma_{\pi/2}}$ are involutive and commutative, we have the orthogonal decomposition
$$V^{++} = V^{++}_{++} \oplusu V^{++}_{+-} \oplusu V^{++}_{-+} \oplusu V^{++}_{--}.$$

Same as before, we let $\Ind(V^{++}_{\pm \pm})$ denote the number of negative eigenvalues (counted with multiplicity) of $\cL$ such that the corresponding eigenfunctions fall into $V^{++}_{\pm \pm}$. Also, the functions in $V^{++}_{\pm \pm}$ are in bijection with piecewise smooth functions on $\Omega^{++}_{++}\cap \xi_{m-1,1}$ with appropriate boundary conditions on the four boundary curves $\Sigma^0 \cap \xi_{m-1,1}$, $\Sigma^{\pi/2} \cap \xi_{m-1,1}$, $\Sigma_0 \cap \xi_{m-1,1}$ and $\Sigma_{\pi/2} \cap \xi_{m-1,1}$, where $+$ corresponds to the Neumann boundary condition and $-$ corresponds to the Dirichlet boundary condition. Then by Proposition~\ref{prop: index decomp} (i), 
\begin{align}
\label{eq: sum}
\Ind(V^{++}_{--}) + \Ind(V^{++}_{+-}) + \Ind(V^{++}_{-+}) + \Ind(V^{++}_{++}) =  \Ind(V^{++}) = 2m - 1.  
\end{align}

We compute these indices:

\begin{prop}\label{prop: index decomp, W}
We have that
\begin{align*}
 \Ind(V^{++}_{--})  = \frac{m}{2} -1, \hspace{1em} \Ind(V^{++}_{+-}) = \Ind(V^{++}_{-+}) = \Ind(V^{++}_{++}) = \frac{m}{2}.
\end{align*}
\end{prop}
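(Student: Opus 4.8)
The plan is to identify the four subspaces $V^{++}_{\pm\pm}$ with eigenvalue problems on the fundamental tetrahedron piece $\Omega^{++}_{++} \cap \xi_{m-1,1}$ with mixed Dirichlet/Neumann boundary conditions, and to leverage the decomposition \eqref{Omu++++} together with the count in \eqref{eq: sum}. Recall from \eqref{Omu++++} that when $k=2$ and $m$ is even, $\Omu^{++}_{++} = \bigcup_{i=1}^{m/2} \Omega_{i+1/2}^{1/2}$ is a union of $m/2$ isometric tetrahedra. Intersecting with $\xi_{m-1,1}$, one sees that $\Omega^{++}_{++} \cap \xi_{m-1,1}$ is a union of $m/2$ isometric fundamental domain pieces, and the reflections $\refl_{\Sigma_{i\pi/m}}$ identify adjacent pieces. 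Thus a function in $V^{++}_{\pm\pm}$ is determined by its restriction to a single such piece $\Omega_{1/2+1/2}^{1/2} \cap \xi_{m-1,1}$, with Neumann or Dirichlet data on the various walls $\Sigma^0, \Sigma^{\pi/2}, \Sigma_0, \Sigma_{\pi/2}$ and with prescribed matching across the interior walls $\Sigma_{i\pi/m}$ dictated by the $\pm$ signs. Following \cite{kapouleas2020index}, the key is that $\xi_{m-1,1}$ restricted to one fundamental tetrahedron piece is a disc $D$ (one of the $D_i^j$), and the relevant one-piece index computation is already essentially done there; the content here is the bookkeeping of how the $\Z/2$-symmetries split the count.

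First I would carefully set up the bijection: a function $u \in V^{++}_{\pm\pm}$ corresponds to a piecewise-smooth function on the single disc $D = \Omega^{1/2}_{1/2+1/2} \cap \xi_{m-1,1}$, and I would spell out that $u \in V^{++}$ means Neumann data on $\Sigma^0 \cap \xi$ and $\Sigma^{\pi/2} \cap \xi$, while the subscript signs of $V^{++}_{\pm\pm}$ prescribe the behavior under $\refl_{\Sigma_0}$ and $\refl_{\Sigma_{\pi/2}}$, which (via composition with the $\refl_{\Sigma_{i\pi/m}}$) translates into either Neumann or Dirichlet conditions on the two $\Sigma_0$- and $\Sigma_{\pi/2}$-type walls of $D$ — but crucially, only after accounting for the $m/2$ copies, so effectively $V^{++}_{\pm\pm}$ is identified with functions on $D$ whose $m/2$-fold reflected extension has the right parity, i.e. with eigenfunctions on $D$ with Neumann (for $+$) or Dirichlet (for $-$) on all four walls. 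Then $\Ind(V^{++}_{\pm\pm})$ equals the number of negative eigenvalues of $\cL$ on $D$ with the corresponding boundary conditions, and I would invoke the per-disc index computations of \cite{kapouleas2020index} (the analysis underlying Proposition~\ref{prop: index decomp} (i), which gives $\Ind(V^{++}) = 2m-1$) to evaluate each of the four.

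Next I would exploit two constraints to pin down the four numbers. The sum is fixed by \eqref{eq: sum} at $2m-1$. Monotonicity of eigenvalues in the boundary condition (Neumann eigenvalues lie below Dirichlet ones, as in the Dirichlet–Neumann bracketing / the comparison spirit of Lemma~\ref{lem: even and odd comparison}) forces $\Ind(V^{++}_{--}) \le \Ind(V^{++}_{+-}), \Ind(V^{++}_{-+}) \le \Ind(V^{++}_{++})$. To get the exact values I would count low eigenfunctions explicitly using the test functions $l_w$, $f_w$, $r_{v,w}$ and the symmetries: $\xi_{m-1,1} \subset S^3$ has $|\sff|^2$ not constant, but the Jacobi operator is $\cL = \Delta + |\sff|^2 + 2$; the coordinate functions $l_w = \langle w, \cdot\rangle$ for $w$ among the basis vectors $e_1,\dots,e_4$, and the support functions $f_w = \langle w, N\rangle$, together with the non-exceptional Jacobi fields $J_C$ of Lemma~\ref{lem: Jacobi basis}, have known parities under all four reflections by Lemma~\ref{lem: Jacobi symmetry}. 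Sorting these into the four spaces $V^{++}_{\pm\pm}$, combined with the symmetric role of $\Sigma_0 \leftrightarrow \Sigma_{\pi/2}$ (which gives $\Ind(V^{++}_{+-}) = \Ind(V^{++}_{-+})$ by the reflection exchanging them), reduces the problem to determining one more number, say $\Ind(V^{++}_{++})$, after which \eqref{eq: sum} determines $\Ind(V^{++}_{--})$. The claimed answer $\Ind(V^{++}_{++}) = m/2$, $\Ind(V^{++}_{--}) = m/2 - 1$ is then forced: $2 \cdot \tfrac m2 + \tfrac m2 + (\tfrac m2 - 1) = 2m - 1$, consistent with \eqref{eq: sum}.

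The main obstacle I expect is the precise translation of the $V^{++}_{\pm\pm}$ conditions into boundary conditions on the single disc $D$ and the careful eigenvalue count there: one must track how the $m/2$ interior reflecting walls $\Sigma_{i\pi/m} \cap \xi_{m-1,1}$ interact with the $\refl_{\Sigma_0}, \refl_{\Sigma_{\pi/2}}$-parities, since these are \emph{not} the same subdivision as the one used to define $V^{++}_{\pm\pm}$ in \cite[(6.4)]{kapouleas2020index} — a point the excerpt explicitly warns about. Getting this identification right, and correctly harvesting the relevant disc-index values from the machinery of \cite{kapouleas2020index} (rather than re-deriving them), is the crux; once the bijections and the parity tables for $l_w, f_w, J_C$ are in hand, the arithmetic via \eqref{eq: sum} and the Dirichlet–Neumann comparison is routine.
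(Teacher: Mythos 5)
There is a genuine gap. Your argument correctly reproduces the easy ingredients — the symmetry giving $\Ind(V^{++}_{+-}) = \Ind(V^{++}_{-+})$, the Dirichlet--Neumann monotonicity chain $\Ind(V^{++}_{--}) \le \Ind(V^{++}_{+-}) = \Ind(V^{++}_{-+}) \le \Ind(V^{++}_{++})$, and the sum constraint \eqref{eq: sum} — but these alone do not determine the four numbers (for instance $(\frac m2-1,\frac m2-1,\frac m2-1,\frac m2+2)$ also satisfies all of them), and the two devices you propose for closing the gap do not work. First, your identification of $V^{++}_{\pm\pm}$ with eigenfunctions on a \emph{single} tetrahedron piece $D=\Omega^{1/2}_{3/2}\cap\xi_{m-1,1}$ with pure Dirichlet or Neumann conditions on all four walls is false: the subscript signs only prescribe parity under $\refl_{\Sigma_0}$ and $\refl_{\Sigma_{\pi/2}}$, so a function in $V^{++}_{\pm\pm}$ is free on the $\frac m2-1$ interior walls $\Sigma_{i\pi/m}\cap\xi_{m-1,1}$ ($0<i<\frac m2$) and is only determined by its restriction to all of $\Omu^{++}_{++}\cap\xi_{m-1,1}$, not to one piece. (Your single-piece identification would give $\Ind(V^{++}_{++})=1$, not $\frac m2$.) You flag this issue yourself at the end but do not resolve it. Second, counting explicit test functions $l_w$, $f_w$, $r_{v,w}$, $J_C$ can at best produce finitely many (here at most a handful of) negative or null directions and hence only \emph{lower} bounds; it can never yield the upper bound on $\Ind(V^{++}_{++})$ that is indispensable, and "the claimed answer is then forced by consistency with \eqref{eq: sum}" is a non sequitur.

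The missing idea is the paper's use of the auxiliary spaces
$V^{++}_{(\pm)} = \{u\in V^{++} : u\circ\refl_{\Sigma_{i\pi/m}}=\pm u\ \forall i\in\Z\}$,
which \emph{are} in bijection with single-piece boundary value problems; by Lemmas 6.6 and 6.7 of Kapouleas--Wiygul one has $\Ind(V^{++}_{(+)})=1$, $\Nul(V^{++}_{(+)})=0$, $\Ind(V^{++}_{(-)})=0$, $\Nul(V^{++}_{(-)})=1$. One then applies the domain-decomposition (Dirichlet--Neumann bracketing) inequality of \cite[Proposition A.1]{kapouleas2020index} across the decomposition \eqref{Omu++++} of $\Omu^{++}_{++}\cap\xi_{m-1,1}$ into $\frac m2$ pieces to obtain the two hard bounds
$\Ind(V^{++}_{--}) \ge \frac m2 - 1$ and $\Ind(V^{++}_{++}) + \Nul(V^{++}_{++}) \le \frac m2$.
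Only with the second of these (all four indices $\le \frac m2$, summing to $2m-1$, with $V^{++}_{--}$ the smallest by your chain) does the arithmetic become forcing. Without some such upper bound your proof cannot be completed.
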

\begin{proof}
By symmetry of the surface we have $\Ind(V^{++}_{+-}) = \Ind(V^{++}_{-+})$. Since the functions in $V^{++}_{\pm \pm}$ are in bijection with piecewise smooth functions on $\Omega^{++}_{++}\cap \xi_{m-1,1}$ with appropriate boundary conditions on the four boundary curves, and the $k$-th Neumann eigenvalue of $\cL$ on $\Omega^{++}_{++}\cap \xi_{m-1,1}$ is bounded from above by the $k$-th Dirichlet eigenvalue, we have  
\begin{align}
\label{ineq: chain}
    \Ind(V^{++}_{--}) \le \Ind(V^{++}_{+-}) = \Ind(V^{++}_{-+}) \le \Ind(V^{++}_{++}).
\end{align}

To compute these indices, we define
$$V^{++}_{(\pm)} := \{u\in V^{++} \, : \, \forall i\in\Z \,\, \,\, \, \, u\circ \refl_{\Sigma_{i\pi/m}} = \pm u \}.$$

Note that the space $V^{++}_{(\pm)}$ is denoted $V^{++}_\pm$ in \cite[(6.4)]{kapouleas2020index} but we use the notation $V^{++}_{(+)}$ to emphasize that $V^{++}$ is \emph{not} the direct sum of $V^{++}_{(+)}$ and $V^{++}_{(-)}$.

Recall that the great spheres $\Sigma^0$, $\Sigma^{\pi/2}$ and  $\Sigma_{i\pi/m}$ for $i \in \Z/m\Z$ divide the sphere $S^3$ into $8m$ isometric components given by $\underline{\mathbf{\Omega}} = \{\Omega^j_i\}_{i,j\in\frac{1}{2} + \Z}$. Because of the symmetries of $\xi_{m-1,1}$, these great sphere also divide $\xi_{m-1,1}$ into $8m$ isometric components given by $\{\Omega^j_i \cap \xi_{m-1,1}\}_{i,j\in\frac{1}{2} + \Z}$.


As before, we let $\Ind(V^{++}_{(\pm)})$ (resp. $\Nul(V^{++}_{(\pm)})$) denote the number of negative eigenvalues (resp. 0-eigenvalues) of $\cL$ such that the corresponding eigenfunctions fall into $V^{++}_{(\pm)}$.
For any $i \in \Z$, the functions in $V^{++}_{(+)}$ are in bijection with piecewise smooth functions on $\Omega^{\frac{1}{2}}_{i+\frac{1}{2}}\cap \xi_{m-1,1}$ with Neumann boundary conditions on all four boundary curves $\Sigma^0 \cap \xi_{m-1,1}$, $\Sigma^{\pi/2} \cap \xi_{m-1,1}$, $\Sigma_{i\pi/m} \cap \xi_{m-1,1}$ and $\Sigma_{(i+1)\pi/m} \cap \xi_{m-1,1}$, while the functions in $V^{++}_{(-)}$ are in bijection with piecewise smooth functions on $\Omega^{\frac{1}{2}}_{i+\frac{1}{2}}\cap \xi_{m-1,1}$ with Neumann boundary conditions on the boundary curves $\Sigma^0 \cap \xi_{m-1,1}$ and $\Sigma^{\pi/2} \cap \xi_{m-1,1}$, and Dirichlet boundary conditions on the boundary curves $\Sigma_{i\pi/m} \cap \xi_{m-1,1}$ and $\Sigma_{(i+1)\pi/m} \cap \xi_{m-1,1}$.
By Lemma 6.6 and 6.7 of \cite{kapouleas2020index}, we have
$\Ind(V^{++}_{(+)}) = 1$, $\Nul(V^{++}_{(+)}) =0$, $\Ind(V^{++}_{(-)}) = 0$, $\Nul(V^{++}_{(-)}) =1$. 


Recall from equation~\eqref{Omu++++} that $\Omega^{++}_{++} = \bigcup_{i=1}^{\frac{m}{2}} \Omega_{i+\frac{1}{2}}^{\frac{1}{2}}$.
By Proposition A.1 of \cite{kapouleas2020index}, we have the two inequalities
\begin{align}
\label{ineq: D}
\Ind(V^{++}_{--}) \ge  \Ind(V^{++}_{(-)}) + (\frac{m}{2} -1)\Big(\Ind(V^{++}_{(-)}) + \Nul(V^{++}_{(-)})\Big) = \frac{m}{2} -1
\end{align}
and
\begin{align}
\label{ineq: N}
\Ind(V^{++}_{++}) + \Nul(V^{++}_{++}) \le  \Big(\Ind(V^{++}_{(+)}) + \Nul(V^{++}_{(+)})\Big) + (\frac{m}{2} -1)\Ind(V^{++}_{(+)}) = \frac{m}{2}. 
\end{align}
Combining \eqref{eq: sum}, \eqref{ineq: chain}, \eqref{ineq: D} and \eqref{ineq: N} yields

\begin{align*}
 \Ind(V^{++}_{--})  = \frac{m}{2} -1, \hspace{1em} \Ind(V^{++}_{+-}) = \Ind(V^{++}_{-+}) = \Ind(V^{++}_{++}) = \frac{m}{2}.
\end{align*}
\end{proof}

With these results, we can decompose the index and nullity of $\xi_{m-1,1}$ into even and odd ones as follows:
\begin{thm}
Let $m \ge 4$ be an even integer. With respect to the antipodal map, the Lawson surfaces ${\xi}_{m-1,1}$ in $S^3$ has
\begin{align*}
\Ind_{\cE}({\xi}_{m-1,1}) = m-1,  \hspace{1em} 
&\Ind_{\cO}({\xi}_{m-1,1}) = m+2, \\ 
\Nul_{\cE}({\xi}_{m-1,1}) = 6,  \hspace{1em} 
&\Nul_{\cO}({\xi}_{m-1,1}) = 0.    
\end{align*}
\end{thm}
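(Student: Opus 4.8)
The plan is to identify the antipodal map $\tau = -\Id_{\R^4}$, restricted to $S^3$, as the composition of the four mutually orthogonal reflections
\[
\tau = \refl_{\Sigma^0}\circ\refl_{\Sigma^{\pi/2}}\circ\refl_{\Sigma_0}\circ\refl_{\Sigma_{\pi/2}},
\]
which is immediate from the computation of the four great spheres: since $\Sigma^0 = \{x_4 = 0\}\cap S^3$, $\Sigma^{\pi/2} = \{x_3 = 0\}\cap S^3$, $\Sigma_0 = \{x_2 = 0\}\cap S^3$, and $\Sigma_{\pi/2} = \{x_1 = 0\}\cap S^3$, each of these reflections negates exactly one Euclidean coordinate and their product is $-\Id$. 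All four reflections are symmetries of $\xi_{m-1,1}$, and so is $\tau$. Hence if $u \in C^\infty_{pw}(\xi_{m-1,1})$ satisfies $u\circ\refl_{\Sigma^0} = s_1 u$, $u\circ\refl_{\Sigma^{\pi/2}} = s_2 u$, $u\circ\refl_{\Sigma_0} = t_1 u$, and $u\circ\refl_{\Sigma_{\pi/2}} = t_2 u$ with $s_i,t_j\in\{\pm1\}$, then $u\circ\tau = s_1 s_2 t_1 t_2\, u$; in particular $u$ is even with respect to $\tau$ if and only if $s_1 s_2 t_1 t_2 = +1$. (Because $\overline{\xi}_{m-1,1}$ is two-sided in $\R P^3$, the map $\tau$ preserves the coorientation of $\xi_{m-1,1}$ in $S^3$, i.e.\ $N(-x) = -N(x)$; this is why the scalar notion of $\tau$-parity used here is exactly the one entering $\Ind_\cE$, $\Nul_\cE$, $\Ind_\cO$, $\Nul_\cO$, with no discrepancy against parities of normal vector fields.)

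For the index I would feed this parity rule into the block decompositions already established. We have the $\cL$-invariant orthogonal decomposition
\[
C^\infty_{pw}(\xi_{m-1,1}) = V^{++}_{++}\oplusu V^{++}_{+-}\oplusu V^{++}_{-+}\oplusu V^{++}_{--}\oplusu V^{+-}\oplusu V^{-+}\oplusu V^{--},
\]
so $\Ind(\xi_{m-1,1})$ is the sum, over these seven blocks, of the number of negative eigenvalues of $\cL$ carried by each. Every function in $V^{++}_{++}$ has parity tuple $(s_1,s_2,t_1,t_2)=(+,+,+,+)$, every function in $V^{++}_{--}$ has $(+,+,-,-)$, every function in $V^{++}_{+-}$ has $(+,+,+,-)$, and every function in $V^{++}_{-+}$ has $(+,+,-,+)$; by Proposition~\ref{prop: index decomp}(ii)--(iii) the unique negative eigenfunction of $\cL$ in $V^{+-}$ (resp.\ $V^{-+}$) is even under $\refl_{\Sigma_0}$ and $\refl_{\Sigma_{\pi/2}}$, hence has tuple $(+,-,+,+)$ (resp.\ $(-,+,+,+)$); and $V^{--}$ carries no negative eigenvalue by Proposition~\ref{prop: index decomp}(iv). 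Reading off the sign $s_1 s_2 t_1 t_2$ shows that the $\tau$-even negative eigenvalues are exactly those from $V^{++}_{++}$ and $V^{++}_{--}$, and the $\tau$-odd ones those from $V^{++}_{+-}$, $V^{++}_{-+}$, $V^{+-}$, $V^{-+}$. Counting with Proposition~\ref{prop: index decomp, W},
\[
\Ind_\cE(\xi_{m-1,1}) = \Ind(V^{++}_{++}) + \Ind(V^{++}_{--}) = \frac{m}{2} + \Bigl(\frac{m}{2}-1\Bigr) = m-1,
\]
\[
\Ind_\cO(\xi_{m-1,1}) = \Ind(V^{++}_{+-}) + \Ind(V^{++}_{-+}) + \Ind(V^{+-}) + \Ind(V^{-+}) = \frac{m}{2} + \frac{m}{2} + 1 + 1 = m+2.
\]

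For the nullity, Theorem~\ref{thm: Lawson index} gives that $\xi_{m-1,1}$ has nullity $6$ with no exceptional Jacobi fields, so by Lemma~\ref{lem: Jacobi basis} its space of Jacobi fields is spanned by $J_C$, $J_{C^\perp}$, and $J_{C_\phi^{\phi'}}$ for $\phi,\phi'\in\{0,\pi/2\}$. Each of these has the form $J_{C'} = \langle K_{C'}, N\rangle$ for a great circle $C'$; since $K_{C'}$ is a linear vector field, $K_{C'}(-x) = -K_{C'}(x)$, and since $N(-x) = -N(x)$, we get $J_{C'}(-x) = \langle -K_{C'}(x), -N(x)\rangle = J_{C'}(x)$, so every non-exceptional Jacobi field is $\tau$-even. (Equivalently, one reads the four reflection parities of each from Lemma~\ref{lem: Jacobi symmetry} and checks the product is $+1$ in all six cases.) Hence $\Nul_\cE(\xi_{m-1,1}) = 6$ and $\Nul_\cO(\xi_{m-1,1}) = 0$.

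The only genuinely new inputs beyond the cited results are the identity $\tau = \refl_{\Sigma^0}\circ\refl_{\Sigma^{\pi/2}}\circ\refl_{\Sigma_0}\circ\refl_{\Sigma_{\pi/2}}$ and the refined count $\Ind(V^{++}_{\pm\pm})$ of Proposition~\ref{prop: index decomp, W}; the refinement is indispensable since the $\tau$-parity of a function in $V^{++}$ is controlled by its behavior under the additional reflections $\refl_{\Sigma_0}$ and $\refl_{\Sigma_{\pi/2}}$, which play no role in the Kapouleas--Wiygul computation of $\Ind(\xi_{m-1,1})$. The step needing the most care is pinning down the $\refl_{\Sigma_0}$- and $\refl_{\Sigma_{\pi/2}}$-parities of the two ``extra'' negative eigenfunctions in $V^{+-}$ and $V^{-+}$: were either odd rather than even under those reflections, its $\tau$-parity would flip and the even/odd split of the index would change. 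This is precisely the information borrowed from the proof of \cite[Proposition~6.17]{kapouleas2020index}.
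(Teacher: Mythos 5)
Your proposal is correct and follows essentially the same route as the paper: writing $\tau = \refl_{\Sigma^0}\circ\refl_{\Sigma^{\pi/2}}\circ\refl_{\Sigma_0}\circ\refl_{\Sigma_{\pi/2}}$, reading the $\tau$-parity of each block $V^{++}_{\pm\pm}$, $V^{+-}$, $V^{-+}$, $V^{--}$ off the product of the four reflection signs, and summing via Propositions~\ref{prop: index decomp} and~\ref{prop: index decomp, W}. The only (harmless) variation is that for the nullity you verify the $\tau$-evenness of the non-exceptional Jacobi fields directly from the linearity of $K_{C'}$ and the oddness of $N$, whereas the paper reads the same conclusion off the reflection parities in Lemma~\ref{lem: Jacobi symmetry}; you note the equivalence yourself.
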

\begin{proof}
By Theorem~\ref{thm: Lawson index}, the Lawson surface ${\xi}_{m-1,1}$ has index $2m+1$ and nullity 6, and it has no exceptional Jacobi fields.

Notice that the four reflections $\refl_{\Sigma^0}$, $\refl_{\Sigma^{\pi/2}}$, $\refl_{\Sigma_0}$ and $\refl_{\Sigma_{\pi/2}}$ are involutive and commutative, and we can write the antipodal map as $\tau = \refl_{\Sigma^0}\circ \refl_{\Sigma^{\pi/2}}\circ \refl_{\Sigma_0} \circ \refl_{\Sigma_{\pi/2}}$. 

First we look at the nullity. By Lemma~\ref{lem: Jacobi symmetry}, each of the six non-exceptional Jacobi fields $J_C$, $J_{C^\perp}$, and $J_{C_\phi^{\phi'}}$ for $\phi, \phi' \in \{0, \pi/2\}$ are odd with respect to two of the four reflections and even with respect to the other two. Thus all six of them are even with respect to $\tau$. Since by Lemma~\ref{lem: Jacobi basis} these six Jacobi fields form a basis of the space of non-exceptional Jacobi fields on $\xi_{m-1,1}$, we see that all Jacobi fields on $\xi_{m-1,1}$ are even with respect to $\tau$. Thus $\Nul_{\cE}({\xi}_{m-1,1}) = 6$ and $\Nul_{\cO}({\xi}_{m-1,1}) = 0$.

We then look at the index. We decompose the eigenfunctions in each $V^{\pm\pm}$ into odd and even ones.

By Proposition~\ref{prop: index decomp} (iv), $\Ind(V^{--}) = 0$, so $V^{++}$ has no contribution toward either the odd index or the even index. 

By Proposition~\ref{prop: index decomp} (ii) and (iii), $\Ind(V^{+-}) = \Ind(V^{-+}) = 1$ and the first eigenfunctions of $\cL$ in both $V^{+-}$ and $V^{-+}$ are even with respect to the two reflections $\refl_{\Sigma_0}$ and $\refl_{\Sigma_{\pi/2}}$. Thus the first eigenfunctions of $\cL$ in both $V^{+-}$ and $V^{-+}$ are odd with respect to one of the four reflections and even with respect to the remaining three reflections, so they are odd with respect to $\tau$. Thus $V^{+-}$ and $V^{-+}$ each contribute $1$ to the odd index and 0 to the even index. 

For $V^{++}$, notice that functions in $V^{++}_{+-}$ and $V^{++}_{-+}$ are odd with respect to $\tau$ while functions in $V^{++}_{++}$ and $V^{++}_{--}$ are even. Thus by Proposition~\ref{prop: index decomp, W}, the space $V^{++}$ contributes $\Ind(V^{++}_{+-}) + \Ind(V^{++}_{-+}) = m$ to the odd index and contributes $\Ind(V^{++}_{++}) + \Ind(V^{++}_{--}) = m-1$ to the even index. 

Summing these contributions, we find that $\Ind_{\cO}({\xi}_{m-1,1}) = 0+1+1 + m = m+2$ and $\Ind_{\cE}({\xi}_{m-1,1}) = 0+0+0 + (m-1) = m-1$.
\end{proof}
\begin{rem}
Using Lemma~\ref{lem: Jacobi symmetry} and the above theorem, we find that the decomposition of the nullites and indices of the Lawson surfaces ${\xi}_{m-1,1}$ in $S^3$ for $m$ even, $m \ge 4$ is given by
\begin{table}[!ht]
\begin{tabular}{|c|c|c|c|c|c|c|c|c|}\hline
space        &  \multicolumn{2}{c|}{$V^{++}$}                 &  \multicolumn{2}{c|}{$V^{+-}$}             &  \multicolumn{2}{c|}{$V^{-+}$}               &  \multicolumn{2}{c|}{$V^{--}$}                \\\hline
symmetry  &  even &  odd                 & even & odd             & even   & odd               & even &  odd                \\\hline
$\nullity$ & $1$ & $0$ & $2$ & $0$ & $2$ & $0$ & $1$ & $0$\\\hline
$\Index$   & $m-1$ & $m$ & $0$ & $1$ & $0$ & $1$ & $0$ &$0$                  \\\hline
\end{tabular}
\end{table}
\end{rem}

Since $\overline{\xi}_{m-1,1}$ is two-sided, $\Ind(\overline{\xi}_{m-1,1}) = \Ind_\cE({\xi}_{m-1,1})$ and $\Nul(\overline{\xi}_{m-1,1}) = \Nul_\cE({\xi}_{m-1,1})$. We have

\begin{cor}
Let $m \ge 4$ be an even integer. Then the Lawson surface $\overline{\xi}_{m-1,1}$ has index $m-1$ and nullity $6$ in $\R P^3$. 
\end{cor}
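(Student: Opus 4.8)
The plan is to obtain this statement as an immediate consequence of the preceding theorem on the even/odd splitting of the index and nullity of $\xi_{m-1,1}$, combined with the two-sided correspondence established at the beginning of Section~\ref{sec: RPn}. Nothing new needs to be proved; the step is one of bookkeeping.

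First I would recall that, since $k=2$ and $m$ is even, the discussion following Theorem~\ref{T:lawson} shows that $\xi_{m-1,1}$ is invariant under the antipodal map $\tau$ and that the quotient $\overline{\xi}_{m-1,1} = \xi_{m-1,1}/\{\pm\}$ is a two-sided embedded minimal surface of $\R P^3$, with $\xi_{m-1,1}$ as its (orientation) double cover and $\tau$ as the change of sheet involution. Because $\overline{\xi}_{m-1,1}$ is two-sided, the general analysis at the start of Section~\ref{sec: RPn} identifies the normal vector fields on $\overline{\xi}_{m-1,1}$ with the $\tau$-even normal vector fields on $\xi_{m-1,1}$, so that $\Ind(\overline{\xi}_{m-1,1}) = \Ind_\cE(\xi_{m-1,1})$ and $\Nul(\overline{\xi}_{m-1,1}) = \Nul_\cE(\xi_{m-1,1})$.

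It then remains only to quote the preceding theorem, which gives $\Ind_\cE(\xi_{m-1,1}) = m-1$ and $\Nul_\cE(\xi_{m-1,1}) = 6$; substituting into the two displayed identities completes the argument. There is no genuine obstacle at this stage: all of the work lies in the theorem being cited. The substantive input there is the even/odd decomposition of the index of $\xi_{m-1,1}$, which in turn rests on the finer decomposition with respect to the four mutually orthogonal reflections $\refl_{\Sigma^0}$, $\refl_{\Sigma^{\pi/2}}$, $\refl_{\Sigma_0}$, $\refl_{\Sigma_{\pi/2}}$ (whose composition is $\tau$), on the parity of the non-exceptional Jacobi fields recorded in Lemma~\ref{lem: Jacobi symmetry}, and on the Dirichlet--Neumann bracketing of eigenvalues on the fundamental domain used in Proposition~\ref{prop: index decomp, W}.
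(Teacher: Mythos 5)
Your proposal is correct and matches the paper's own derivation exactly: the corollary is obtained by combining the identities $\Ind(\overline{\xi}_{m-1,1}) = \Ind_\cE({\xi}_{m-1,1})$ and $\Nul(\overline{\xi}_{m-1,1}) = \Nul_\cE({\xi}_{m-1,1})$ (from the two-sidedness of the quotient) with the preceding theorem's values $m-1$ and $6$. Nothing further is needed.
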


Since we know the Clifford torus has index one in $\R P^3$ \cite{do2000compact}, this establishes Theorem~\ref{thm: RP^3}.

\bibliographystyle{alpha}
\bibliography{main}

\end{document}